\newtheorem{theorem}{Theorem}[section]
\newtheorem{proposition}[theorem]{Proposition}
\newtheorem{lemma}[theorem]{Lemma}
\newtheorem{corollary}[theorem]{Corollary}
\newtheorem{conjecture}[theorem]{Conjecture}
\theoremstyle{plain}
\numberwithin{equation}{subsection}
\theoremstyle{remark}
\newtheorem{claim}[theorem]{Claim}
\newcommand{\C}{{\mathbb C}}
\newcommand{\Q}{{\mathbb Q}}
\newcommand{\N}{{\mathbb N}}
\newcommand{\cC}{{\mathcal C}}
\newcommand{\tensor}{\otimes}
\newcommand{\Kbar}{\overline{K}}
\newcommand{\Qbar}{\bar{\Q}}
\DeclareMathOperator{\Gal}{Gal}
\newcommand{\bP}{{\mathbb P}}
\newcommand{\bG}{{\mathbb G}}
\newcommand{\bC}{{\mathbb C}}
\newcommand{\lra}{\longrightarrow}
\newcommand{\cO}{\mathcal{O}}
\newcommand{\cM}{\mathcal{M}}
\newcommand{\cL}{\mathcal{L}}
\newcommand\cLbar {\mathcal{\overline{L}}}
\newcommand\OK {\Omega_K}
\renewcommand\O{{\mathcal O}}
\renewcommand\P{{\mathbb P}}
\newcommand{\hhat}{{\widehat h}}
\author{D.~Ghioca}
\address{
Dragos Ghioca\\
Department of Mathematics\\
University of British Columbia\\
Vancouver, BC V6T 1Z2\\
Canada
}
\email{dghioca@math.ubc.ca}
\author{K.~D.~Nguyen}
\address{
Khoa D.~Nguyen \\
Department of Mathematics\\
University of British Columbia\\
And Pacific Institute for The Mathematical Sciences\\ 
Vancouver, BC V6T 1Z2, Canada}
\email{dknguyen@math.ubc.ca}
\author{H.~Ye}
\address{
Hexi Ye\\
Department of Mathematics\\
Zhejiang University\\
Hangzhou, 310027\\
China
}
\email{yehexi@gmail.com}
\keywords{Dynamical Manin-Mumford Conjecture, equidistribution of points of small height, symmetries of the Julia set of a rational function}
\subjclass[2010]{Primary: 37P05. Secondary: 37P30}
\begin{document}
	\title[Dynamical Manin-Mumford Conjecture]{The Dynamical Manin-Mumford Conjecture and the Dynamical Bogomolov Conjecture for endomorphisms of $(\bP^1)^n$}

	\begin{abstract}
	We prove Zhang's Dynamical Manin-Mumford Conjecture and Dynamical Bogomolov Conjecture for dominant endomorphisms $\Phi$ of $(\bP^1)^n$. We use the equidistribution theorem for points of small height with respect to an algebraic dynamical system, combined with an analysis of the symmetries of the Julia set for a rational function. 
\end{abstract} 
	  
	\maketitle

%##############################################################################
%##############################################################################

\section{Introduction}
\label{sec:introduction}

The Chebyshev polynomial of degree $d$ is the unique polynomial $T_d$ with the property that for each $z\in\C$, we have $T_d(z+1/z) = z^d + 1/z^d$. A  Latt\`es map $f:\bP^1\lra \bP^1$ is a rational function coming from the quotient of an affine map $L(z)=az+b$ on a torus $\mathcal{T}$ (elliptic curve), i.e. $f=\Theta\circ L\circ \Theta^{-1}$ with $\Theta: \mathcal{T}\to \P^1$ a finite-to-one holomorphic map; see \cite{lattes} by Milnor. For two rational functions $f$ and $g$, we say they are  (linearly) conjugate if there exists an automorphism $\eta$ of $\bP^1$ such that $f = \eta^{-1}\circ g\circ \eta$. We call \emph{exceptional} any rational map of degree $d>1$ which is conjugate either to $z^{\pm d}$, or to $\pm T_d(z)$, or to a Latt\`es map. 

As always in algebraic dynamics, given a self-map $f$ on a variety $X$, we denote by $f^n$ its $n$-th iterate (for any non-negative integer $n$, where $f^0$ denotes the identity map). We say that $x\in X$ is periodic if there exists $n\in\N$ such that $f^n(x)=x$; we call $x$ preperiodic if there exists $m\in\N$ such that $f^m(x)$ is periodic. Also, for a subvariety $V\subset X$, we say that $V$ is periodic if (the Zariski closure of) $f^n(V)$ equals $V$ for some $n\in\N$; similarly, we say that $V$ is preperiodic if (the Zariski closure of)  $f^m(V)$ is periodic.

%%%%%%%%%%%%%%%%%%%%%%%%%%%%%%%%%%%%%%%%%%%%%%%%%%%%%%%%%%%%%%%%%%%%%%%%%%%%%%%

\subsection{Statement of our main results}

%%%%%%%%%%%%%%%%%%%%%%%%%%%%%%%%%%%%%%%%%%%%%%%%%%%%%%%%%%%%%%%%%%%%%%%%%%%%%%%

We prove the following result.
\begin{theorem}
\label{main result}
Let $n$ be a positive integer, let $f_i\in \C(x)$ (for $i=1,\dots, n$) be  non-exceptional rational functions of degree $d_i\ge 2$, and let $V\subset (\bP^1)^n$ be an irreducible subvariety defined over $\C$. Assume: 
\begin{enumerate}
\item[(1)] either that $V$ contains a Zariski dense set of preperiodic points under the action of $(x_1,\dots, x_n)\mapsto (f_1(x_1),\dots, f_n(x_n))$ and that $d_1=d_2=\cdots =d_n$;  
\item[(2)] or that $f_1,\dots,f_n\in\Qbar(x)$, that $V$ is defined over $\Qbar$, and that there exists a Zariski dense sequence of points $(x_{1,i},\dots, x_{n,i})\in V(\Qbar)$ such that $\lim_{i\to\infty}\sum_{j=1}^n\hhat_{f_j}(x_{j,i})=0$, where $\hhat_{f_j}$ is the canonical height with respect to the rational function $f_j$. 
\end{enumerate}
Then there exists a finite set $S$ of tuples 
$$(i,j)\in\{1,\dots, n\}\times \{1,\dots, n\}$$ along with $(\ell_i,\ell_j)\in \N\times \N$ and  curves $C_{i,j}\subset \bP^1\times \bP^1$ which are preperiodic under the coordinatewise action $(x_i,x_j)\mapsto \left(f_i^{\ell_i}(x_i),f_j^{\ell_j}(x_j)\right)$ such that:
\begin{enumerate}
\item[(i)] $\deg\left(f_i^{\ell_i}\right)=\deg\left(f_j^{\ell_j}\right)$; and 
\item[(ii)] $V$ is an irreducible component of 
\begin{equation}
\label{boolean form}
\bigcap_{(i,j)\in S} \pi_{i,j}^{-1}\left(C_{i,j}\right),
\end{equation}
where $\pi_{i,j}:(\bP^1)^n\lra (\bP^1)^2$ is the projection on the $(i,j)$-th coordinate axes for each $(i,j)\in S$.
\end{enumerate} 
\end{theorem}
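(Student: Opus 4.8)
The plan is to reduce the statement to the known structure theory of preperiodic subvarieties of $(\bP^1)^n$ under coordinatewise non-exceptional actions, using the equidistribution theorem for points of small height as the main analytic input. First I would treat case (2) as the central case: after replacing each $f_i$ by an iterate and conjugating, one may assume everything is defined over a number field $K$. The hypothesis that there is a Zariski dense sequence $(x_{1,i},\dots,x_{n,i})\in V(\Qbar)$ with $\sum_j \hhat_{f_j}(x_{j,i})\to 0$ lets me apply the Yuan--Zhang type equidistribution theorem (for the adelic metrized line bundle attached to the product dynamical system $\Phi(x_1,\dots,x_n)=(f_1(x_1),\dots,f_n(x_n))$) on $V$: the Galois orbits of the $x_{\bullet,i}$ equidistribute with respect to the canonical measure $\mu_{V}$, which on $(\bP^1)^n$ is a product of the measures $\mu_{f_j}$ supported on the Julia sets $J(f_j)$ pulled back and intersected appropriately along $V$. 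Case (1) reduces to case (2) by a specialization argument (the classical trick of spreading out over a finitely generated field and specializing to $\Qbar$, so that preperiodic points remain preperiodic and Zariski density is preserved).

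The heart of the argument is then to extract rigidity from the equality $\mu_V = $ (restriction of the product measure). I would look at the two projections $\pi_i, \pi_j: V \to \bP^1$ onto pairs of coordinates. Pushing forward the equidistribution statement, and using that each $f_i$ is non-exceptional so that its Julia set $J(f_i)$ has only finitely many symmetries (the key local statement: the group of Möbius transformations commuting with some iterate of a non-exceptional $f_i$, equivalently preserving $\mu_{f_i}$, is finite — this is the "analysis of the symmetries of the Julia set" advertised in the abstract), I would argue that on each factor $\bP^1$ along $V$ the relevant measures must be related by an algebraic correspondence compatible with the dynamics. Concretely: if $\pi_{i,j}(V) \subsetneq (\bP^1)^2$ then its image is a curve $C_{i,j}$, and the measure-theoretic compatibility forces $C_{i,j}$ to be preperiodic under $(x_i,x_j)\mapsto (f_i^{\ell_i}(x_i), f_j^{\ell_j}(x_j))$ for suitable $\ell_i,\ell_j$ with $\deg(f_i^{\ell_i})=\deg(f_j^{\ell_j})$ — this last degree condition comes from matching the dynamical degrees so that the product measure on the curve is balanced, and it is exactly where non-exceptionality (ruling out the Lattès/power/Chebyshev coincidences that would allow mismatched degrees) is used. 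If instead $\pi_{i,j}(V) = (\bP^1)^2$ for all pairs, then $V$ projects dominantly onto every pair of coordinates, and a dimension count combined with the fact that $V$ carries a dense set of small points would force $V = (\bP^1)^n$, the trivial case with $S = \emptyset$.

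Having produced, for each pair $(i,j)$ with $\pi_{i,j}(V)$ a curve, a preperiodic curve $C_{i,j}$ with $V \subseteq \pi_{i,j}^{-1}(C_{i,j})$, it remains to show that $V$ is actually an \emph{irreducible component} of the intersection $\bigcap_{(i,j)\in S}\pi_{i,j}^{-1}(C_{i,j})$ for a suitable finite subset $S$ of pairs. For this I would choose $S$ to be a set of pairs whose associated constraints cut out $V$ scheme-theoretically near a generic point: since $V$ is irreducible and each $\pi_{i,j}^{-1}(C_{i,j})$ is a hypersurface-type constraint containing $V$, one shows by induction on $n$ (or on the number of non-dominant projections) that finitely many of them suffice to isolate $V$ as a component — the point being that the canonical measure on the intersection equals the canonical measure on $V$ once enough constraints are imposed, and two preperiodic subvarieties with the same canonical measure and the same dimension through a common generic point coincide. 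The main obstacle I anticipate is the symmetry analysis of the Julia set: establishing that for non-exceptional $f$ the only Möbius maps $\sigma$ with $\sigma_*\mu_f = \mu_g$ for some $g$ of equal degree are "essentially" forced to conjugate a power of $f$ to a power of $g$ (Ritt/Medvedev–Scanlon type statements), and tracking this precisely enough to get the preperiodicity and the degree equality $\deg(f_i^{\ell_i}) = \deg(f_j^{\ell_j})$ — all the dynamical content of the theorem is concentrated there, the rest being equidistribution bookkeeping and a Noetherian induction.
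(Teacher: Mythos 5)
There is a genuine gap, and it sits exactly where the paper's new content lies. Your plan splits into two cases according to whether the pairwise projections $\pi_{i,j}(V)$ are curves or are dominant, and in the dominant case you assert that ``a dimension count combined with the fact that $V$ carries a dense set of small points would force $V=(\bP^1)^n$.'' That assertion is false as a dimension count and unproved as a dynamical statement: any hypersurface of multidegree $(1,1,\dots,1)$ in $(\bP^1)^n$ with $n\ge 3$ projects dominantly onto every proper subset of coordinate axes (in particular onto every pair) without being all of $(\bP^1)^n$. Ruling out precisely such hypersurfaces --- i.e.\ showing that for non-exceptional $f_i$ and $n>2$ no hypersurface projecting dominantly onto each set of $n-1$ axes can carry a Zariski dense set of small points --- is the entire new theorem of the paper (Theorem~\ref{general hypersurface theorem}), and equidistribution alone does not deliver it. The paper gets it by applying Yuan's theorem to $n$ \emph{different} adelic metrized line bundles on $H$ (the pullbacks under the $n$ projections forgetting one coordinate) to conclude that the $n$ induced $(n-1,n-1)$-currents $\hat\mu_1=\cdots=\hat\mu_n$ coincide; by proving Proposition~\ref{iff proposition} (via the Faltings--Hriljac--Moriwaki Hodge index theorem together with the Yuan--Zhang arithmetic Hodge index theorem, plus a Yuan--Zhang specialization step) so as to produce a point of $H$ whose first $n-1$ coordinates are repelling fixed points and whose last coordinate is then forced to be preperiodic; and then by a local rigidity analysis of holomorphic symmetries of Julia sets (Levin, Zdunik, Hamilton) around that point, culminating in Theorem~\ref{measure to periodicity}. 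None of these ingredients appears in your proposal, and your substitute for the symmetry analysis (finiteness of M\"obius maps preserving $\mu_f$) is both a different and a weaker statement than what is needed (the paper needs rigidity for \emph{local holomorphic} symmetries, not global M\"obius ones).

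Two further points of divergence worth noting. First, the curves $C_{i,j}$ and the degree condition $\deg(f_i^{\ell_i})=\deg(f_j^{\ell_j})$ in the conclusion are not extracted from a pairwise measure-compatibility argument as you suggest; in the paper they come from the already-known $n=2$ case \cite[Theorem~1.1]{GNY}, after the reduction of Proposition~\ref{prop first reduction} (project $V$ to hypersurfaces $H_j$ in $(\bP^1)^{D+1}$, show $V$ is a component of the intersection of the $\tilde H_j$ by a fiber-dimension argument, and use the vacuous higher-dimensional hypersurface case plus induction to drop down to $\bP^1\times\bP^1$). Second, your reduction of hypothesis~(1) to~(2) by ``specialization preserving Zariski density'' is in the right spirit, but the density preservation is not a classical triviality: it requires the Yuan--Zhang equidistribution/specialization results for polarizable endomorphisms (this is where $d_1=\cdots=d_n$ is used), together with function-field height arguments (Baker, Benedetto) to handle isotriviality. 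As it stands, the proposal outsources the theorem's hardest step to an incorrect dimension count, so it does not constitute a proof.
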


Our Theorem~\ref{main result} answers Zhang's Dynamical Manin-Mumford Conjecture (over $\C$) and a slightly more general form of Zhang's Dynamical Bogomolov Conjecture (over $\Qbar$) for endomorphisms of $(\bP^1)^n$ (see \cite[Conjectures~1.2.1~and~4.1.7]{ZhangLec}).  Note that  any dominant (regular) endomorphism of $(\bP^1)^n$ has an iterate which is of the form 
$$\Phi:=(f_1,\dots, f_n):(\bP^1)^n\lra (\bP^1)^n;$$
see also \cite[Remark~1.2]{GNY}. Our result is slightly stronger than the one conjectured in \cite{ZhangLec} since in part~(2) of Theorem~\ref{main result} we do not assume the endomorphism $\Phi=(f_1,\dots, f_n)$ is necessarily polarizable (i.e., the rational maps $f_i$ might have different degrees). On the other hand, we exclude the case when the $f_i$'s are conjugate to monomials, $\pm$Chebyshev polynomials, or Latt\`es maps since in those cases there are counterexamples to a formulation when $\Phi$ is not polarizable (see \cite{GTZ} and \cite[Remark~1.2]{GNY}). Moreover, if at least two of the maps $f_i$ are Latt\'es, then even assuming $\Phi$ is polarizable, one would still have to impose an additional condition in order to get that the subvariety $V$ is preperiodic (see \cite[Theorem~1.2]{GTZ}). In our next result (see Theorem~\ref{general DMM result}) we prove the appropriately modified statement of the Dynamical Manin-Mumford Conjecture (as formulated in \cite[Conjecture~2.4]{GTZ}) for all polarizable endomorphisms of $(\bP^1)^n$. 

\begin{theorem}
\label{general DMM result}
Let $n\in\N$, let $f_i\in\C(x)$ (for $i=1,\dots, n$) be rational functions of degree $d>1$, let $\Phi:(\bP^1)^n\lra (\bP^1)^n$ be defined by $$\Phi(x_1,\dots,x_n)=(f_1(x_1),\dots, f_n(x_n))$$ 
and let $V\subset (\bP^1)^n$ be an irreducible subvariety. Assume there exists a Zariski dense set of smooth points $P=(a_1,\dots, a_n)\in V(\C)$ which are preperiodic under $\Phi$ and moreover such that the tangent space $T_{V,P}$ of $V$ at $P$ is preperiodic under the induced action of $\Phi$ on  ${\rm Gr}_{\dim(V)}\left(T_{(\bP^1)^n,P}\right)$, where ${\rm Gr}_{\dim(V)}\left(T_{(\bP^1)^n,P}\right)$ is the corresponding Grassmannian. Then the subvariety $V$ must be preperiodic under the action of $\Phi$.
\end{theorem}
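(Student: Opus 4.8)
The plan is to reduce to Theorem~\ref{main result} by partitioning the coordinates of $(\bP^1)^n$ according to the dynamical type of $f_i$ and treating each part by the appropriate Manin-Mumford statement. Call an index $i$ \emph{generic} if $f_i$ is non-exceptional, \emph{toric} if $f_i$ is conjugate to $z^{\pm d}$ or to $\pm T_d$, and \emph{elliptic} if $f_i$ is a Latt\`es map, subdividing the elliptic indices by the isogeny class of the underlying elliptic curve; this splits $\{1,\dots,n\}$ into finitely many \emph{blocks}. I will argue that: (a) the structural conclusion of Theorem~\ref{main result} still holds for our $\Phi$ although its factors may now be exceptional, namely $V$ is an irreducible component of $\bigcap_{(i,j)\in S}\pi_{i,j}^{-1}(C_{i,j})$ for finitely many curves $C_{i,j}$ preperiodic under a coordinatewise iterate; (b) no such $C_{i,j}$ relating coordinates from two distinct blocks can dominate both factors, so these curves decompose block by block and $V$ becomes a product $\prod_B V_B$ of subvarieties in the block coordinates; and (c) each $V_B$ is preperiodic. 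Since a product of preperiodic subvarieties is preperiodic, this gives the theorem.

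For the generic block one applies part~(1) of Theorem~\ref{main result}: $V_B$ carries a Zariski dense set of preperiodic points and all $f_i$ ($i\in B$) have the same degree $d$, so $V_B$ is an irreducible component of a finite intersection of preimages of preperiodic curves and hence preperiodic --- this is precisely what is needed for Theorem~\ref{main result} to settle the Dynamical Manin-Mumford Conjecture. (Here one passes to a single iterate $\Phi^L$: condition~(i) forces $d^{\ell_i}=d^{\ell_j}$, hence $\ell_i=\ell_j$, so for a common multiple $L$ every $C_{i,j}$ is preperiodic under $(f_i^L,f_j^L)$; since $\Phi^L$ is a finite morphism that is surjective on every coordinate, $\Phi^L(\pi_{i,j}^{-1}(C))=\pi_{i,j}^{-1}((f_i^L,f_j^L)(C))$, and a standard pigeonhole argument on the finitely many irreducible components that can arise finishes the proof.) The tangent-space hypothesis is not used here.

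For a toric block, lift each $\pm T_d$-coordinate through the degree-two map $w\mapsto w+w^{-1}$ and each $z^{\pm d}$-coordinate isomorphically, landing in a torus $\Gm^m$; the lift has a Zariski dense set of torsion points, so by Laurent's theorem it is a finite union of torsion cosets, each preperiodic under the induced dynamics (multiplication by $\pm d$, possibly twisted by a root of unity, carries torsion cosets to torsion cosets), whence $V_B$ is preperiodic, again with no use of the tangent hypothesis. For an elliptic block, lift through the maps $\Theta_i\colon E_i\to\bP^1$ and pass through isogenies to land in $E^m$ for a single $E$; the lift has a Zariski dense set of torsion points, so by Raynaud's theorem it is a finite union of torsion cosets $c+B_0$. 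Writing $M=\mathrm{diag}(a_i)\in\End(E^m)$ for the multiplier coming from the linear parts of the Latt\`es maps (each $a_i$ of norm $d$), the coset $c+B_0$ is preperiodic exactly when $B_0$ has finite forward orbit under $M$; this can fail --- the source of the counterexamples of \cite{GTZ} --- and it is precisely here that the hypothesis enters: the assumption that the tangent space of $V$ at a Zariski dense set of smooth preperiodic points is preperiodic under the induced Grassmannian action forces $\mathrm{Lie}(B_0)$, hence $B_0$, to have finite $M$-orbit, so $c+B_0$ and thus $V_B$ is preperiodic.

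It remains to justify step~(b). Suppose $C\subset\bP^1\times\bP^1$ is irreducible, dominates both factors, is preperiodic under a coordinatewise iterate $(f_i^a,f_j^b)$, and $i,j$ lie in distinct blocks. If one of $f_i,f_j$ is generic and the other exceptional, $C$ yields a nontrivial symmetry of the Julia set of the generic map forcing it to be exceptional --- contrary to assumption; this is exactly the curve analysis underlying Theorem~\ref{main result}. If the two maps are of toric/elliptic type but lie in different blocks, lift $C$ to the appropriate semiabelian variety ($\Gm\times E$ or $E_i\times E_j$), where the dynamics becomes an isogeny composed with a translation; by Manin-Mumford for semiabelian varieties $C$ is then a torsion coset of a one-dimensional algebraic subgroup dominating both factors, which is impossible since there is no nonzero homomorphism between a torus and an abelian variety, nor between non-isogenous elliptic curves. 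I expect the two genuinely substantive points to be the structural description in step~(a) when some $f_i$ are exceptional --- the equidistribution input is insensitive to exceptionality, but one must still combine the Julia-set symmetry analysis with the classification of preperiodic curves in products of (semi)abelian varieties --- and the precise translation, in the elliptic block, of the Grassmannian preperiodicity hypothesis into finiteness of the $M$-orbit of $B_0$; the latter is the single place where the extra hypothesis is used, and where the Manin-Mumford statement for $(\bP^1)^n$ genuinely requires it.
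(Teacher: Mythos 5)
Your plan hinges on step~(a), and as stated that step is false, which makes the whole proposal collapse (and, indeed, internally inconsistent). You claim that even when some $f_i$ are exceptional, $V$ is an irreducible component of $\bigcap_{(i,j)\in S}\pi_{i,j}^{-1}(C_{i,j})$ with each $C_{i,j}$ \emph{preperiodic} under a coordinatewise iterate. Since all the $f_i$ here have the same degree $d$, condition~(i) forces $\ell_i=\ell_j$, so each $\pi_{i,j}^{-1}(C_{i,j})$ would be a preperiodic hypersurface and $V$, as a component of an intersection of preperiodic sets, would be preperiodic with no use of the Grassmannian hypothesis whatsoever. That contradicts the known counterexamples of \cite{GTZ} for pairs of Latt\`es maps (a curve in $\bP^1\times\bP^1$ with a Zariski dense set of preperiodic points which is \emph{not} preperiodic), counterexamples you yourself invoke later when you admit that, in the elliptic block, $c+B_0$ need not be preperiodic. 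The underlying reason the structural statement cannot be transported to exceptional factors is that the mechanism producing it in Theorem~\ref{main result} is the rigidity of local symmetries of the Julia set (Levin), which fails exactly for exceptional maps: their Julia sets are circles, segments or all of $\bP^1$ and carry infinitely many symmetries, so the argument that converts equality of the measures $\hat\mu_i$ into invariance of $H$ (Lemma~\ref{identical symmetry}, Lemma~\ref{identical maps}) breaks down. The correct substitute in the exceptional case is a description by torsion cosets upstairs (Laurent/Raynaud), which are in general \emph{not} preperiodic; only after feeding in the tangent-space hypothesis can one upgrade them, and your proposal never actually supplies the argument for (a) — it merely labels it a "substantive point". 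Your mixed-block elimination (b) similarly leans on "the curve analysis underlying Theorem~\ref{main result}", which excludes exceptional maps, so the generic-versus-exceptional case of (b) is also unsupported as written.

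For comparison, the paper takes a different and shorter route that avoids blocks entirely: Proposition~\ref{prop first reduction} reduces to an irreducible hypersurface projecting dominantly onto every set of $n-1$ coordinate axes (the $n=2$ case being \cite[Theorem~1.3]{GNY}); for such a hypersurface Theorem~\ref{general hypersurface theorem} shows the mixed situation cannot occur at all — either every $f_i$ is conjugate to a monomial or $\pm$Chebyshev polynomial, or every $f_i$ is Latt\`es. In the first case one lifts through $\bG_m^n$ and applies Laurent's theorem; in the second one lifts to $\prod_i E_i$, notes the induced endomorphism is polarizable, and cites \cite[Theorem~2.1]{GTZ}, which is precisely where the Grassmannian preperiodicity hypothesis is consumed (your sketch of forcing a finite orbit of $B_0$ under the diagonal multiplier is, in effect, an attempt to reprove that result, and would need the care taken there). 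If you want to salvage your outline, replace "preperiodic curves" in (a) by the weaker torsion-coset description available in exceptional coordinates, and then you will be forced back into something very close to the paper's reduction and its use of \cite{GTZ}.
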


%%%%%%%%%%%%%%%%%%%%%%%%%%%%%%%%%%%%%%%%%%%%%%%%%%%%%%%%%%%%%%%%%%%%%%%%%%%%%%%

\subsection{Brief history of the Dynamical Manin-Mumford Conjecture and of the Dynamical Bogomolov Conjecture}

%%%%%%%%%%%%%%%%%%%%%%%%%%%%%%%%%%%%%%%%%%%%%%%%%%%%%%%%%%%%%%%%%%%%%%%%%%%%%%%

Motivated by the classical Manin-Mumford conjecture (proved by Laurent \cite{Laurent} in the case of tori, by Raynaud \cite{Raynaud} in the case of abelian varieties and by McQuillan \cite{McQuillan} in the general case of semiabelian varieties) and also by the classical Bogomolov conjecture (proved by Ullmo \cite{Ullmo} in the case of curves embedded in their Jacobian and by Zhang \cite{Zhang:Bogomolov} in the general case of abelian varieties), Zhang formulated dynamical analogues of both conjectures (see \cite[Conjecture 1.2.1, Conjecture 4.1.7]{ZhangLec}) for polarizable endomorphisms of any projective variety. We say that an endomorphism $\Phi$ of a projective variety $X$ is \emph{polarizable} if there exists an ample line bundle $\cL$ on $X$ such that $\Phi^*\cL$ is linearly equivalent to $\cL^{\otimes d}$ for some integer $d\ge 2$. As initially conjectured by Zhang, one might expect that if $X$ is defined over a field $K$ of characteristic $0$ and $\Phi$ is a polarizable endomorphism of $X$, and the  subvariety $V\subseteq X$ contains a Zariski dense set of preperiodic points, then $V$ is preperiodic.  Furthermore if $K$ is a number field then one can construct the canonical height $\hhat_\Phi$  for all points in $X(\Qbar)$ with respect to the action of $\Phi$ (see \cite{C-S} and also our  Subsection~\ref{height subsection}) and then Zhang's dynamical version of the Bogomolov Conjecture asks that if a subvariety $V\subseteq X$ is not preperiodic, then there exists $\epsilon>0$ with the property that the set of points $x\in V(\Qbar)$  such that $\hhat_\Phi(x)<\epsilon$ is not Zariski dense in $V$. Since all preperiodic points have canonical height equal to $0$, the Dynamical Bogomolov Conjecture is a generalization of the Dynamical Manin-Mumford Conjecture when the algebraic dynamical system $(X,\Phi)$ is defined over a number field.

Besides the case of abelian varieties $X$ endowed with the multiplication-by-$2$ map $\Phi$ (which motivated Zhang's conjectures), there are known only a handful of special cases of the Dynamical Manin-Mumford or the Dynamical Bogomolov conjectures. All of these partial results are for curves contained in $\bP^1\times \bP^1$---see \cite{Baker-Hsia, GT-Bogomolov, GTZ, GNY}. We also mention here the paper of Dujardin and Favre \cite{Favre-Dujardin} who prove a result for plane polynomial automorphisms motivated by Zhang's Dynamical Manin-Mumford Conjecture. Our Theorem~\ref{main result} is the first result towards the Dynamical Manin-Mumford and the Dynamical Bogomolov conjectures for higher dimensional subvarieties of $(\bP^1)^n$. 

The case $n=2$ in Theorems~\ref{main result} and \ref{general DMM result} (i.e., $V$ is a curve in $\bP^1\times \bP^1$) was established in \cite[Theorem~1.1~and~1.3]{GNY}. Even though the general strategy in our present proof follows the one we employed in \cite{GNY}, there are significant new obstacles that we need to overcome; for more details,  see Subsection~\ref{subsection proof strategy}.

%%%%%%%%%%%%%%%%%%%%%%%%%%%%%%%%%%%%%%%%%%%%%%%%%%%%%%%%%%%%%%%%%%%%%%%%%%%%%%%

\subsection{Preperiodic subvarieties}

%%%%%%%%%%%%%%%%%%%%%%%%%%%%%%%%%%%%%%%%%%%%%%%%%%%%%%%%%%%%%%%%%%%%%%%%%%%%%%%

The conclusion from Theorem~\ref{main result} covers the main result of Medvedev's PhD thesis \cite{Alice} (whose main findings were published in  \cite{Medvedev-Scanlon}) who showed that any invariant subvariety $V\subset (\bP^1)^n$ under the coordinatewise action of $n$ non-exceptional rational functions must have the form \eqref{boolean form}. Our result is stronger than the results from \cite{Alice, Medvedev-Scanlon} since we only assume that a subvariety $V\subset (\bP^1)^n$ contains a Zariski dense set of preperiodic points under the action of $\Phi:=(f_1,\dots, f_n)$ and then we derive that $V$ must have the form \eqref{boolean form} (see also our Theorem~\ref{shape preperiodic}). Medvedev and Scanlon assume that $V$ is invariant by $\Phi$ (or more generally, preperiodic under the action of $\Phi$) and then using the model theory of difference fields, they conclude that $V$ must have the form \eqref{boolean form}. We do not use model theory; instead, we use algebraic geometry (including the powerful Arithmetic Hodge Index Theorem of Yuan and Zhang \cite{Yuan-Zhang-published}) coupled with a careful analysis for the local symmetries of the Julia set of a rational function. We state below our formal result which covers the main result of \cite{Alice} thus providing the form of any preperiodic subvariety in $(\bP^1)^n$ under the split action of $n$ non-exceptional rational functions.

\begin{theorem}
\label{shape preperiodic}
Let $n\in\N$, let $f_1,\dots, f_n\in\C(x)$ be non-exceptional rational functions of degrees $>1$, and let $\Phi$ be their coordinatewise action on $(\bP^1)^n$. If $V\subset (\bP^1)^n$ is a preperiodic subvariety under the action of $\Phi$, then there exists a finite set $S$ of pairs $(i,j)\in\{1,\dots, n\}\times \{1,\dots, n\}$ along with curves $C_{i,j}\subset \bP^1\times \bP^1$ which are preperiodic under the coordinate wise action $(x_i,x_j)\mapsto (f_i(x_i),f_j(x_j))$ such that $V$ is an irreducible component of 
$\bigcap_{(i,j)\in S} \pi_{i,j}^{-1}\left(C_{i,j}\right)$, 
where $\pi_{i,j}:(\bP^1)^n\lra (\bP^1)^2$ is the projection on the $(i,j)$-th coordinate axes. 
\end{theorem}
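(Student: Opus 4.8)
The plan is to deduce Theorem~\ref{shape preperiodic} from the main equidistribution-based analysis that underlies Theorem~\ref{main result}. Indeed, a preperiodic subvariety $V\subset(\bP^1)^n$ trivially contains a Zariski dense set of preperiodic points (e.g.\ one may take all preperiodic points of $\Phi$ lying on $V$; since $V$ is preperiodic, the orbit of $V$ is a proper closed subset, and preperiodic points are Zariski dense in each of the finitely many subvarieties in the orbit of $V$, hence in $V$ itself). However, hypothesis~(1) of Theorem~\ref{main result} also requires $d_1=\cdots=d_n$, which we do \emph{not} want to assume here, so a direct citation is not quite enough. The first step is therefore to reduce to the equal-degree case: replacing each $f_i$ by a suitable iterate $f_i^{\ell_i}$ (and $V$ by the same subvariety, now preperiodic under the modified coordinatewise action), we may arrange that $\deg(f_i^{\ell_i})$ are all equal to a common value, after partitioning the coordinates into groups on which the degrees are already multiplicatively comparable; the subtlety is that the individual degrees $d_i$ need not be powers of a common integer, so one genuinely obtains several ``blocks'' of coordinates and the conclusion of Theorem~\ref{main result} must be applied within, and then across, these blocks. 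This is precisely the kind of bookkeeping that the statement of Theorem~\ref{shape preperiodic} is designed to absorb, since it only asserts that $V$ is cut out (as a component) by pullbacks of preperiodic curves $C_{i,j}\subset\bP^1\times\bP^1$, with no constraint relating $\deg(f_i)$ and $\deg(f_j)$.

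Concretely, the argument proceeds as follows. First I would apply Theorem~\ref{main result}(1) to each maximal block $B$ of indices on which the degrees $d_i$ admit a common power (equivalently, $\log d_i/\log d_j\in\Q$), after replacing the $f_i$ for $i\in B$ by iterates making all degrees equal; this yields, for each such block, a finite set $S_B$ of pairs and preperiodic curves $C_{i,j}$ with $V$ an irreducible component of $\bigcap_{(i,j)\in S_B}\pi_{i,j}^{-1}(C_{i,j})$ \emph{after projecting $V$ to the coordinates in $B$}. The key point is then that across distinct blocks the projection of $V$ must be everything: if $i,j$ lie in different blocks, any preperiodic curve $C\subset\bP^1\times\bP^1$ for $(f_i^{\ell_i},f_j^{\ell_j})$ with $\deg(f_i^{\ell_i})\neq\deg(f_j^{\ell_j})$ forces a relation incompatible with the independence of the two factors' canonical heights---this is exactly the rigidity used to exclude the non-polarizable counterexamples and already appears in the proof of Theorem~\ref{main result}---so $V$ imposes \emph{no} condition relating coordinates in different blocks, and hence $V$ is the product (over blocks) of its projections. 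Assembling the sets $S_B$ into a single finite set $S$ of pairs then exhibits $V$ as an irreducible component of $\bigcap_{(i,j)\in S}\pi_{i,j}^{-1}(C_{i,j})$, with each $C_{i,j}$ preperiodic under $(x_i,x_j)\mapsto(f_i(x_i),f_j(x_j))$ (passing from $f_i^{\ell_i}$-preperiodicity back to $f_i$-preperiodicity of a plane curve is immediate, since an $f_i$-iterate of $C_{i,j}$ is again one of finitely many curves).

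I expect the main obstacle to be the cross-block independence step, i.e.\ showing that $V$ is genuinely a product over the blocks and that one never needs a curve $C_{i,j}$ with $\deg(f_i^{\ell_i})\neq\deg(f_j^{\ell_j})$. This is where the analysis of the symmetries of the Julia set enters: a preperiodic curve projecting dominantly to both factors gives an algebraic correspondence between the two dynamical systems, which by the theory developed for Theorem~\ref{main result} (the Arithmetic Hodge Index Theorem of Yuan--Zhang \cite{Yuan-Zhang-published} together with the classification of local symmetries of Julia sets) can only exist when the two maps share a common iterate up to conjugacy---in particular forcing the degrees into the same block. Once this rigidity is in hand, the remaining steps are formal: the passage between iterates, the reassembly of the index set $S$, and the verification that $V$, being irreducible, is an irreducible component (not merely contained in) the intersection $\bigcap_{(i,j)\in S}\pi_{i,j}^{-1}(C_{i,j})$, which follows by a dimension count since each projection $\pi_{i,j}(V)$ is contained in $C_{i,j}$ and $V$ maps onto the product of its block-projections.
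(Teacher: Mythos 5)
Your route---deducing Theorem~\ref{shape preperiodic} from Theorem~\ref{main result} via a block decomposition of the degrees---is genuinely different from the paper's, which never invokes Theorem~\ref{main result} as a black box: the paper reduces to an \emph{invariant} hypersurface projecting dominantly onto every set of $n-1$ coordinates and reruns the argument of Theorem~\ref{general hypersurface theorem}, with invariance replacing Proposition~\ref{iff proposition}. As written, your proposal has concrete gaps. First, your opening claim that a preperiodic $V$ contains a Zariski dense set of preperiodic points is asserted, not proved: ``preperiodic points are Zariski dense in each of the finitely many subvarieties in the orbit of $V$'' is exactly the statement at issue, and since the $d_i$ may differ, $\Phi$ is not polarizable, so one cannot quote density results for polarized endomorphisms. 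The paper supplies precisely this point by an elementary but essential argument: over a dense open subset of $(\bP^1)^{n-1}$ the fibers of the invariant hypersurface under the projection forgetting one coordinate are finite, so the fiber over any point all of whose coordinates are periodic consists of preperiodic points, and such fibers are Zariski dense. Some such argument (it adapts to any periodic subvariety via a generically finite coordinate projection) must appear in your proof; it is also what the paper uses to bypass the Hodge-index input of Proposition~\ref{iff proposition}.

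Second, the cross-block step is the crux of handling unequal degrees and is not proved. The nonexistence of a preperiodic curve dominating both factors when $d_i\neq d_j$ is in fact an elementary degree count (if $(f_i,f_j)^k(C)=C$ with $C$ dominant over both factors, computing $\deg\bigl((f_i,f_j)^k|_C\bigr)$ through the two projections gives $d_i^k=d_j^k$), not a consequence of the Hodge index theorem or Julia-set rigidity, and your further claim that such a curve forces the two maps to ``share a common iterate up to conjugacy'' is neither needed nor established in the paper; more importantly, this nonexistence does not by itself show that $V$ splits as the product of its block projections---that splitting is essentially the cross-block half of the structure theorem you are trying to prove, so the argument is circular at this point. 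Third, your final conversion is not ``immediate'': a curve preperiodic under $(f_i^{\ell_i},f_j^{\ell_j})$ with $\ell_i\neq\ell_j$ need not be preperiodic under $(f_i,f_j)$; for instance, with $f_j=f_i^{2}$ the diagonal is invariant under $(f_i^{2},f_j)$, yet its orbit under $(f_i,f_j)$ runs through the graphs of all iterates of $f_i$ and is infinite. So the curves produced by Theorem~\ref{main result} inside a block whose degrees are equalized only after taking distinct iterates are not yet of the form demanded by Theorem~\ref{shape preperiodic}; one would need, say, to replace them by the $(i,j)$-projections of iterates of $V$ itself, which do inherit preperiodicity under $(f_i,f_j)$. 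Finally, note that the paper first reduces to hypersurfaces, and for an invariant hypersurface dominant over every set of $n-1$ coordinates the degrees are automatically equal (compute $\deg(\Phi|_H)$ via the $n$ coordinate projections); exploiting this would remove your block bookkeeping altogether and bring your argument much closer to a correct (and simpler) deduction.
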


%%%%%%%%%%%%%%%%%%%%%%%%%%%%%%%%%%%%%%%%%%%%%%%%%%%%%%%%%%%%%%%%%%%%%%%%%%%%

\subsection{The Dynamical Pink-Zilber Conjecture}

%%%%%%%%%%%%%%%%%%%%%%%%%%%%%%%%%%%%%%%%%%%%%%%%%%%%%%%%%%%%%%%%%%%%%%%%%%%%%%

Analogous to asking the Dynamical Manin-Mumford Conjecture as a dynamical variant of the classical Manin-Mumford conjecture, one could formulate a Dynamical Pink-Zilber Conjecture, at least in the case of split endomorphisms. The following statement is implicitly raised in \cite{DBHC}.

\begin{conjecture}
\label{DPZ conjecture}
Let $n\in\N$, let $f_1,\dots,f_n\in\C(x)$ be non-exceptional rational functions of degrees $>1$, and let $\Phi:(\bP^1)^n\lra (\bP^1)^n$ be their coordinatewise action $(x_1,\dots, x_n)\mapsto (f_1(x_1),\dots,f_n(x_n))$. For each $m\in\{0,\dots, n\}$, we let ${\rm Per}^{[m]}$ be the union of all  irreducible subvarieties of $(\bP^1)^n$ of codimension $m$, which are periodic  under the action of $\Phi$. If $V\subset (\bP^1)^n$ is an irreducible subvariety which is not contained in a proper periodic subvariety of $(\bP^1)^n$, then $V\cap {\rm Per}^{[\dim(V)+1]}$ is not Zariski dense in $V$.
\end{conjecture}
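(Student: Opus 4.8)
The plan is to run an ``unlikely intersections'' argument in the spirit of the Bombieri--Masser--Zannier and Pink circle of ideas, with the structure theorem for preperiodic subvarieties (Theorem~\ref{shape preperiodic}) and the resulting classification of periodic curves in $\bP^1\times\bP^1$ taking the place of the classification of special subvarieties. First I would record the following description of periodic subvarieties: by Theorem~\ref{shape preperiodic} applied on every pair of coordinate axes, together with the fact that, for non-exceptional maps, a periodic curve in $\bP^1\times\bP^1$ is either of \emph{fibre type} $\{x_i=a\}$ with $a$ a periodic point of $f_i$, or of \emph{graph type} $\{x_i=h(x_j)\}$ with $h$ a rational map semiconjugating an iterate of $f_j$ to an iterate of $f_i$ --- and moreover such an $h$ is necessarily of the form $f_i^{a}\circ\mu\circ f_j^{b}$ with $a,b\ge 0$ and $\mu$ ranging over a finite set (this last point is exactly where the finiteness of the group of M\"obius symmetries of a Julia set of a non-exceptional map, developed in this paper, enters) --- every periodic subvariety $W\subset(\bP^1)^n$ has a finite \emph{combinatorial shape}: a subset $F\subseteq\{1,\dots,n\}$ of coordinates frozen at periodic points, and a partition of $\{1,\dots,n\}\setminus F$ into groups whose members are linked by such semiconjugacies. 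Since there are only finitely many shapes and $V$ is irreducible over the uncountable field $\C$, if $V\cap{\rm Per}^{[\dim(V)+1]}$ were Zariski dense in $V$ then $V\cap P_\tau$ would be Zariski dense in $V$ for a single shape $\tau$, where $P_\tau$ denotes the union of the periodic subvarieties of shape $\tau$ and codimension $\dim(V)+1$.

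Next I would disentangle three contributions to $P_\tau$. A link realised by a \emph{M\"obius} transformation $\mu$ contributes, after pigeon-holing over the finite set of possible $\mu$'s, a fixed proper periodic hypersurface $\pi_{i,j}^{-1}(\{x_i=\mu(x_j)\})$; since $V$ is irreducible and by hypothesis cannot lie in a proper periodic subvariety, such links cannot occur in the shape $\tau$ that finally produces the contradiction. The \emph{frozen} coordinates are removed by induction on $n$: intersecting $V$ with a $\Phi$-invariant fibre $\{x_i=a\}\cong(\bP^1)^{n-1}$ (over a periodic $a$, after replacing $\Phi$ by an iterate) drops both the ambient dimension and the codimension by one, so that, for a Zariski dense set of periodic points $a$, the fibre $V_a$ (each of its irreducible components) satisfies the inductive hypothesis and hence lies in a proper periodic subvariety of $(\bP^1)^{n-1}$; a pigeon-hole over the finitely many shapes and finitely many M\"obius transformations, followed by ``spreading out'' the resulting relation over the Zariski dense set of such $a$ --- and an appeal to the $n=2$ Dynamical Manin--Mumford statement (Theorem~\ref{main result} for $n=2$, which is \cite{GNY}) in the case where the relation is of fibre type --- forces a single periodic constraint on $V$. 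When instead $\tau$ has no links at all and $|F|=\dim(V)+1$, one observes directly that $\pi_F(V)\subseteq(\bP^1)^F$ is a proper subvariety carrying a Zariski dense set of preperiodic points, so Theorem~\ref{main result} forces it, hence $V$, into a proper periodic subvariety.

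The genuine obstacle is the remaining contribution: links realised by semiconjugacies $h=f_i^{a}\circ\mu\circ f_j^{b}$ of \emph{unbounded} degree, i.e.\ the preperiodic curves $\{x_i=f_i^{a}\mu f_j^{b}(x_j)\}$ whose bidegrees grow with $a+b$. Here no pigeon-hole is available, and one faces a bona fide Zilber--Pink problem: one must show that $V$ cannot meet infinitely many of these (or of the higher-codimension periodic subvarieties built from them) without being contained in one of them. I expect this to require a \emph{dynamical Bounded Height Theorem}: a point of $V$ lying on a periodic subvariety of shape $\tau$ and ``complexity'' of order $a+b$ should have canonical height $\hhat_\Phi$ bounded polynomially in $a+b$ --- a statement one would try to extract from the arithmetic equidistribution theorem and from the Arithmetic Hodge Index Theorem of Yuan and Zhang \cite{Yuan-Zhang-published} applied to an auxiliary fibration of $(\bP^1)^n$. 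Granting such a bound, a Northcott-type argument over $\Qbar$ (reducing to a number field by a Masser--Zannier style specialization of the configuration from $\C$ to $\Qbar$), combined again with equidistribution and with the Dynamical Bogomolov part of Theorem~\ref{main result}, would give the required finiteness. The principal difficulty is that the $o$-minimality/Pila--Wilkie input which makes the analogous step work in the toric and abelian settings has no counterpart here, since Julia sets are fractal; the fine control of the local symmetries of Julia sets obtained in this paper is what one hopes will substitute for it. A secondary technical point, already visible in Theorem~\ref{main result}, is that over $\C$ the Dynamical Manin--Mumford input is only available when the relevant degrees coincide, so the unequal-degree case over $\C$ would have to be routed through the specialization to $\Qbar$.
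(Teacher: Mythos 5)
The statement you are trying to prove is Conjecture~\ref{DPZ conjecture}: it is presented in the paper as an \emph{open} conjecture, not a theorem, and the paper supplies no proof of it. The authors only remark that Theorem~\ref{main result} can be combined with other results to settle the special cases where $V$ has dimension $1$ or codimension $2$ and each $f_i$ is a polynomial over $\Qbar$, deferring even that to the separate paper \cite{DPZ}. So there is no argument in this paper against which your proposal could be matched, and any purported proof of the full statement should be viewed with corresponding suspicion.

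Judged on its own terms, your proposal is a strategy sketch with a genuine, self-acknowledged gap at exactly the point where the problem is hard. The pigeon-hole steps (finitely many combinatorial shapes, M\"obius links, frozen coordinates handled by induction plus the $n=2$ case from \cite{GNY}) are plausible in outline, although already the classification of periodic curves for general non-exceptional \emph{rational} maps as graphs of $f_i^{a}\circ\mu\circ f_j^{b}$ with $\mu$ in a finite set is asserted rather than proved and does not follow formally from Theorem~\ref{shape preperiodic}, which only gives the fibered-product structure, not the internal description of the periodic plane curves $C_{i,j}$. The decisive issue, as you say yourself, is the family of periodic subvarieties built from semiconjugacies of unbounded degree: there the argument is explicitly conditional on a ``dynamical Bounded Height Theorem'' controlling $\hhat_\Phi$ on $V\cap{\rm Per}^{[\dim(V)+1]}$ polynomially in the complexity, plus a Northcott-type counting step for which you concede there is no Pila--Wilkie substitute. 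Neither ingredient is established here or extracted from \cite{Yuan-Zhang-published} in your text; bounded-height results of this flavour exist only in the restricted settings of \cite{DBHC} and \cite{DPZ} (dimension $1$ or codimension $2$, polynomials over $\Qbar$), which is precisely why the conjecture remains open. As written, then, the proposal does not prove the statement; it reduces it to an unproven (and currently unavailable) height bound together with a counting argument whose engine is missing.
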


We exclude exceptional rational functions in Conjecture~\ref{DPZ conjecture} since in those cases we rediscover the classical Pink-Zilber Conjecture; for more details on the Pink-Zilber Conjecture, see \cite{Zannier}.

In Conjecture~\ref{DPZ conjecture}, if $V\subset (\bP^1)^n$ is an irreducible hypersurface, then we recover essentially the Dynamical Manin-Mumford Conjecture we proved in Theorem~\ref{main result}. Quite interestingly, the same Theorem~\ref{main result} can be used (along with other results) in order to solve Conjecture~\ref{DPZ conjecture} if $V\subset (\bP^1)^n$ has dimension $1$ or codimension $2$ and each $f_i$ is a polynomial defined over $\Qbar$; see \cite{DPZ}.

%%%%%%%%%%%%%%%%%%%%%%%%%%%%%%%%%%%%%%%%%%%%%%%%%%%%%%%%%%%%%%%%%%%%%%%%%%%%%%%

\subsection{Plan for our paper}

%%%%%%%%%%%%%%%%%%%%%%%%%%%%%%%%%%%%%%%%%%%%%%%%%%%%%%%%%%%%%%%%%%%%%%%%%%%%%%%

In Section~\ref{section reduction} we show that in order to prove Theorems~\ref{main result}~and~\ref{general DMM result}, it suffices to assume that $V\subset (\bP^1)^n$ is a hypersurface which projects dominantly onto any subset of $(n-1)$ coordinate axes of $(\bP^1)^n$. Thus we are left to prove our results for hypersurfaces $H$ (see Theorem~\ref{general hypersurface theorem}), which will be done over the remaining sections of our paper; the conclusion in Theorem~\ref{shape preperiodic} will follow from the ingredients we develop for proving Theorem~\ref{main result}. 

In Sections~\ref{section Julia set}~and~\ref{section equidistribution setup} we setup our notation, state basic properties for the Julia set of a rational function, construct the heights associated to an algebraic dynamical system and define adelic metrized line bundles which are employed in the main equidistribution result (Theorem~\ref{Yuan equidistribution}), which we will then use in our proof. In Section~\ref{section equal measures} we prove that under the hypotheses of Theorem~\ref{general hypersurface theorem}, the measures induced on the hypersuface $H\subset (\bP^1)^n$ from the dynamical systems $$\left((\bP^1)^{n-1}, f_1\times \cdots f_{i-1}\times f_{i+1}\times \cdots f_n\right)$$ 
are all equal (for $i=1,\dots, n$). Also, in Section~\ref{section equal measures} we prove Proposition~\ref{iff proposition}, which is a crucial step in our proof of our main results (for more details on this step and also on our overall proof strategy, see Subsection~\ref{subsection proof strategy}).

In Section~\ref{section from identical to preperiodic} we show how to use the equality of the above  measures to infer the preperiodicity of $H$, assuming also that $H$ satisfies an additional technical hypothesis (see Theorem~\ref{measure to periodicity}). In Section~\ref{section proof of main results} we finalize the proof of Theorem~\ref{general hypersurface theorem} (and thus finish our proof for Theorems~\ref{main result}~and~\ref{general DMM result}). We conclude our paper by proving Theorem~\ref{shape preperiodic}.  

\medskip  

{\bf Acknowledgments.} We are grateful to Tom Tucker, Xinyi Yuan and Shouwu Zhang for very useful conversations.  The first author is partially supported by an NSERC Discovery Grant; the second author is supported by a PIMS and a UBC postdoctoral fellowship. We also thank the Fields Institute for its hospitality and support during the last stage when this project was finalized.

%%%%%%%%%%%%%%%%%%%%%%%%%%%%%%%%%%%%%%%%%%%%%%%%%%%%%%%%%%%%%%%%%%%%%%%%%%%%%%%
%%%%%%%%%%%%%%%%%%%%%%%%%%%%%%%%%%%%%%%%%%%%%%%%%%%%%%%%%%%%%%%%%%%%%%%%%%%%%%%

\section{Reduction to the case of hypersurfaces}
\label{section reduction}

%%%%%%%%%%%%%%%%%%%%%%%%%%%%%%%%%%%%%%%%%%%%%%%%%%%%%%%%%%%%%%%%%%%%%%%%%%%%%%%
%%%%%%%%%%%%%%%%%%%%%%%%%%%%%%%%%%%%%%%%%%%%%%%%%%%%%%%%%%%%%%%%%%%%%%%%%%%%%%%

In this section we present various reductions which we will employ in  proving Theorems~\ref{main result}~and~\ref{general DMM result}. We also provide additional details regarding the overall strategy for our proof.

%%%%%%%%%%%%%%%%%%%%%%%%%%%%%%%%%%%%%%%%%%%%%%%%%%%%%%%%%%%%%%%%%%%%%%%%%%%%%%%

\subsection{Some reductions}

%%%%%%%%%%%%%%%%%%%%%%%%%%%%%%%%%%%%%%%%%%%%%%%%%%%%%%%%%%%%%%%%%%%%%%%%%%%%%%%

We start with the following important reduction.

\begin{proposition}
\label{prop first reduction}
It suffices to prove Theorems~\ref{main result}~and~\ref{general DMM result} under the additional hypothesis that $V\subset (\bP^1)^n$ is a hypersurface  which projects dominantly onto any subset of $n-1$ coordinate axes. 
\end{proposition}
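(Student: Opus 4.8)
The plan is to reduce the general subvariety case to hypersurfaces in two stages. The first stage handles the projections: given an irreducible $V \subset (\bP^1)^n$, let $\pi_T$ denote the projection onto a subset $T$ of coordinate axes, and let $W_T$ be the Zariski closure of $\pi_T(V)$. If $V$ projects dominantly onto $W_T$ for a proper subset $T$ where $W_T$ is itself a proper subvariety of $(\bP^1)^{|T|}$, then $V$ is contained in $\pi_T^{-1}(W_T)$, and one wants to pass to the dynamical system on $(\bP^1)^{|T|}$ and handle $V$ as a component of a preimage of something already understood. Concretely, I would choose $T$ minimal with the property that $\dim W_T = \dim V$ and $W_T$ is a proper subvariety; along the fibers (the remaining coordinates) $V$ is the full product, so $V$ is an irreducible component of $\pi_T^{-1}(W_T)$. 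One then checks that the hypotheses transfer: a Zariski dense set of preperiodic points (resp.\ small-height points) on $V$ pushes forward to such a set on $W_T$ (the canonical height of a projection is bounded by the sum of canonical heights, and $\hhat_{f_i}\ge 0$), and similarly a preperiodic tangent space on $V$ maps to a preperiodic tangent space on $W_T$ for Theorem~\ref{general DMM result}. By induction on $n$ (the case $n=1$ being trivial, or handled by the curve case $n=2$ of \cite{GNY} when $\dim W_T < |T|-1$), $W_T$ has the desired form, and then $V$ being a component of $\pi_T^{-1}(W_T)$ inherits it. So we may assume $V$ projects dominantly onto every subset of $n-1$ coordinate axes.

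The second stage is to upgrade from such a $V$ of arbitrary codimension to a hypersurface. If $V$ projects dominantly onto every $(n-1)$-fold coordinate subspace but $\dim V < n-1$, then in particular $V$ has codimension $\ge 2$. Here the idea is that the fibers of, say, $\pi_{\{1,\dots,n-1\}}|_V$ over a general point are finite (indeed, dominance onto $(n-1)$ axes plus $\dim V < n-1$ forces... actually one needs $\dim V = n-1$ for generic finiteness, so codimension $\ge 2$ is the obstruction to eliminate). I would instead argue that a subvariety of codimension $\ge 2$ which projects dominantly onto all $(n-1)$-subsets cannot exist, or rather, reduce such a $V$ to the hypersurface case by intersecting with a suitable periodic hypersurface, or by noting that $V$ equals an irreducible component of an intersection of hypersurfaces each of which satisfies the hypotheses and projects dominantly appropriately. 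The cleanest route: take the Zariski closures $H_k$ of the image of $V$ under each projection $(\bP^1)^n \to (\bP^1)^{n-1}$ forgetting one coordinate; if some $H_k$ is a proper hypersurface we win by induction, and if all $H_k$ fill up $(\bP^1)^{n-1}$ then $\dim V = n-1$ already (since $V \hookrightarrow \prod$ of the $H_k$ ... ) — one shows $V$ cannot have codimension $\ge 2$ while dominating all the $(n-1)$-subspaces by a dimension count on the diagonal-type embedding $V \to \prod_k H_k$.

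The main obstacle I anticipate is making the induction bookkeeping clean: one must verify that \emph{all} the hypotheses of both Theorems~\ref{main result} and \ref{general DMM result} — the equal-degree condition $d_1 = \cdots = d_n$ in case (1), the defined-over-$\Qbar$ and small-height condition in case (2), and the preperiodic-tangent-space condition in Theorem~\ref{general DMM result} — descend correctly to $W_T$ on the smaller product, and that conversely the conclusion \eqref{boolean form} on $W_T$ lifts back to $V$ (the curves $C_{i,j}$ and the iterates $\ell_i$ are unaffected, but one must make sure the pair $(i,j)$ still ranges correctly and the equal-degree condition (i) is preserved, which it is since it only involves $\deg f_i^{\ell_i}$). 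The subtle point is handling the smooth-point / tangent-space hypothesis of Theorem~\ref{general DMM result} under projection: a smooth point of $V$ need not map to a smooth point of $W_T$, so one restricts to the dense open locus where $\pi_T|_V$ is smooth and $W_T$ is smooth, and uses that the differential $d\pi_T$ carries $T_{V,P}$ onto $T_{W_T, \pi_T(P)}$ and is equivariant for the $\Phi$-actions on the relevant Grassmannians. I expect the dimension-count step eliminating codimension $\ge 2$ to require the most care, since it is where one genuinely uses that $(\bP^1)^n$ is a product and that $V$ dominates \emph{all} the $(n-1)$-subsets simultaneously.
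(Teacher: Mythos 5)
The central step of your first stage is false. You choose $T$ minimal with $\dim W_T=\dim V$ and $W_T$ proper, and claim that ``along the fibers $V$ is the full product, so $V$ is an irreducible component of $\pi_T^{-1}(W_T)$.'' With $\dim W_T=\dim V$ this cannot happen unless $T$ is the full set of coordinates: $\pi_T^{-1}(W_T)$ has dimension $\dim V+(n-|T|)>\dim V$ and is irreducible whenever $W_T$ is, so $V$ sits inside it as a proper subvariety, not as a component. Concretely, for the diagonal curve $V=\{(t,t,t)\}\subset(\bP^1)^3$ your minimal $T$ has two elements, $W_T$ is the diagonal of $(\bP^1)^2$, and $\pi_T^{-1}(W_T)$ is an irreducible surface properly containing $V$; so the induction never starts precisely for the subvarieties that make the reduction nontrivial. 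The repair --- which you mention only in passing and never prove --- is to use several projections simultaneously, as the paper does: fix $D=\dim V$ coordinates onto which $V$ projects dominantly, for each remaining index $j$ let $H_j$ be the closure of the image of $V$ in the coordinates $(x_1,\dots,x_D,x_j)$ (a hypersurface in $(\bP^1)^{D+1}$ satisfying the same hypotheses), set $\tilde H_j:=H_j\times(\bP^1)^{n-D-1}$, and show that $V$ is an irreducible component of $\bigcap_j \tilde H_j$. That last point needs the generic finiteness of the fibers of $\pi|_V$ over $(\bP^1)^D$: over a generic base point $\alpha$ the admissible values of each coordinate $x_j$ inside $\tilde H_j$ form a finite set, so the intersection has finite generic fibers over $(\bP^1)^D$ and $V$ is a component of it.

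Your second stage also misses the target. A subvariety of codimension at least $2$ cannot project dominantly onto any $(n-1)$-fold coordinate subspace (its image has dimension at most $\dim V<n-1$), so the situation you set out to eliminate is vacuous; the case that actually remains after the intersection step is a \emph{hypersurface} failing to dominate some $(n-1)$-subset, i.e.\ $V=\bP^1\times V_0$, which is handled by induction on $n$ applied to $V_0$ and $f_2,\dots,f_n$. Finally, for Theorem~\ref{general DMM result} the phrase ``inherits it'' hides a real step: preperiodicity of each $\tilde H_j$ gives preperiodicity of $\tilde H:=\bigcap_j\tilde H_j$, but one must still argue that the particular component $V$ is preperiodic. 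The paper does this by noting that each $\Phi^m(V)$ still dominates $(\bP^1)^D$, that every irreducible component of an iterate of $\tilde H$ dominating $(\bP^1)^D$ has dimension exactly $D$, hence $\Phi^m(V)$ is always such a component, and there are only finitely many of them. Your worry about transporting the tangent-space hypothesis through the projections is legitimate but secondary; the missing component-of-an-intersection argument and the missing preperiodicity-of-the-component argument are the genuine gaps.
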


\begin{proof}
First we prove that it suffices to assume in both Theorems~\ref{main result}~and~\ref{general DMM result} that $V\subset (\bP^1)^n$ is a hypersurface. Indeed, we assume Theorems~\ref{main result}~and~\ref{general DMM result} hold for all hypersurfaces and we derive the same conclusion for all subvarieties of $(\bP^1)^n$. So, let $V\subset (\bP^1)^n$ be an irreducible subvariety of dimension $D<n-1$ satisfying the hypotheses of Theorem~\ref{general DMM result} or hypothesis~(1) (or (2)) of Theorem~\ref{main result}. Then there exist $D$ coordinate axes (without loss of generality, we assume they are $x_1,\dots, x_D$) so that the projection $\pi$ of $(\bP^1)^n$ onto its first $D$ coordinate axes remains dominant when restricted to $V$. For each $j=D+1,\dots, n$, we let $\pi_j$ be the natural projection map of $(\bP^1)^n$ on coordinates $x_1,\dots, x_D,x_j$, and we let $H_j:=\pi_j(V)$. Then $H_j\subset (\bP^1)^{D+1}$ is a hypersurface satisfying the hypotheses of Theorem~\ref{general DMM result} or hypothesis~(1) (or (2)) of Theorem~\ref{main result} with respect to the coordinatewise action of the rational functions $f_1,\dots, f_D,f_j$. Furthermore, for each $j=D+1,\dots,n$, we let $\tilde{H}_j\subset (\bP^1)^n$ be the hypersurface $H_j\times (\bP^1)^{n-D-1}\subset (\bP^1)^n$ (i.e., we insert a copy of $\bP^1$ on each coordinate axis not included in the set $\{1,\dots, D,j\}$). Then also $\tilde{H}_j\subset (\bP^1)^n$ is a hypersurface satisfying the hypotheses of either Theorem~\ref{main result} or of Theorem~\ref{general DMM result}. Let 
\begin{equation}
\label{tilde H}
\tilde{H}:=\bigcap_{j=D+1}^n \tilde{H}_j;
\end{equation} 
clearly, $V\subset \tilde{H}$ and so, $D=\dim(V)\le \dim\left(\tilde{H}\right)$. 

Since $\dim(V)=D$ and $\pi|_V:V\lra (\bP^1)^D$ is a dominant morphism, then we conclude that there exists a Zariski open subset $U\subset (\bP^1)^D$ such that for each $\alpha\in U$, the fiber $\pi^{-1}(\alpha)$ is finite. Therefore for each $\alpha\in U$ and for each $j=D+1,\dots, n$, we have that there exists a finite set $S_{\alpha,j}$ with the property that if $(a_1,\dots, a_n)\in \tilde{H}_j$ and $(a_1,\dots, a_D)=\alpha$, then $a_j\in S_{\alpha,j}$. Thus for each $\alpha\in U$, we have that there exist finitely many points $(a_1,\dots, a_n)\in \tilde{H}$ such that $(a_1,\dots, a_D)=\alpha$. Hence $V$ is an irreducible component of $\tilde{H}$; moreover, any irreducible component $W$ of $\tilde{H}$ for which $\pi|_W:W\lra (\bP^1)^D$ is a dominant morphism has dimension $D$. 

If Theorem~\ref{main result} holds for hypersurfaces, then each hypersurface $\tilde{H}_j\subset (\bP^1)^n$ must have the form \eqref{boolean form} since each one of these hypersurfaces satisfies the hypotheses of Theorem~\ref{main result}. Actually, since each $\tilde{H}_j$ is a hypersurface, then we must have: 
$$\tilde{H}_j=\pi_{i,j}^{-1}(C_{i,j})$$ 
for some curve $C_{i,j}\subset \bP^1\times \bP^1$, which is preperiodic under the action of $(x_i,x_j)\mapsto \left(f_i^{\ell_i}(x_i),f_j^{\ell_j}(x_j)\right)$ for some $\ell_i,\ell_j\in\N$ with the property that $\deg\left(f_i^{\ell_i}\right)=\deg\left(f_j^{\ell_j}\right)$.  Because $V$ is an irreducible component of $\tilde{H}$ (see \eqref{tilde H}), we obtain the desired conclusion in Theorem~\ref{main result}.

Now, if Theorem~\ref{general DMM result} holds for hypersurfaces, then each hypersurface $\tilde{H}_j\subset (\bP^1)^n$ is preperiodic under the action of $\Phi:=(f_1,\dots, f_n)$. Thus, also $\tilde{H}$ is preperiodic under the action of $\Phi$ (see \eqref{tilde H}). Combining the following facts:
\begin{itemize}
\item $\tilde{H}$ is preperiodic; 
\item $V$ is an irreducible component of $\tilde{H}$;
\item each irreducible component $W$ of $\tilde{H}$ for which $\pi|_W:W\lra (\bP^1)^D$ is a dominant morphism has dimension $D$; and
\item each variety $\Phi^m(V)$ (for $m\in\N$) projects dominantly onto $(\bP^1)^D$, 
\end{itemize}
we obtain that $V$  itself must be preperiodic under the action of $\Phi$, as desired. 

Now, once we reduced proving Theorems~\ref{main result}~and~\ref{general DMM result} to the case $V\subset (\bP^1)^n$ is a hypersurface, we can reduce further to the special case when $V$ projects dominantly onto each subset of $(n-1)$ coordinate axes. Indeed, assuming otherwise, then (without loss of generality) we may assume $V=\bP^1\times V_0$ for some hypersurface $V_0\subset (\bP^1)^{n-1}$. Therefore, it suffices to prove Theorems~\ref{main result}~and~\ref{general DMM result} for the subvariety $V_0\subset (\bP^1)^{n-1}$ under the coordinatewise action of the rational functions $f_2,\dots,f_n$. A simple induction on $n$ finishes our proof.  (Finally, as a side note, we observe that in light of \cite[Theorem~1.1]{GNY}, then due to the reduction proved in Proposition~\ref{prop first reduction}, we have that  Theorem~\ref{main result} is equivalent with proving that if $n>2$ and also if each $f_i$ is non-exceptional, then there is no hypersurface $H\subset (\bP^1)^n$ projecting dominantly onto each subset of $(n-1)$ coordinate axes of $(\bP^1)^n$ such that $H$ contains a Zariski dense set of preperiodic points; this is exactly what we will be proving in Theorems~\ref{general hypersurface theorem}~and~\ref{measure to periodicity}.)   
\end{proof}

%%%%%%%%%%%%%%%%%%%%%%%%%%%%%%%%%%%%%%%%%%%%%%%%%%%%%%%%%%%%%%%%%%%%%%%%%%%%%%

\subsection{A technical result}

%%%%%%%%%%%%%%%%%%%%%%%%%%%%%%%%%%%%%%%%%%%%%%%%%%%%%%%%%%%%%%%%%%%%%%%%%%%%%%%

The next result (proven in Section~\ref{section proof of main results}) in conjunction with \cite[Theorem~1.1~and~1.3]{GNY} yields the conclusions of both   Theorems~\ref{main result}~and~\ref{general DMM result}.

\begin{theorem}
\label{general hypersurface theorem}
Let $n>2$ be an integer, let $f_i\in \C(x)$ of degree $d_i\ge 2$ (for $i=1,\dots, n$) and let $H\subset (\bP^1)^n$ be an irreducible hypersurface projecting dominantly onto each subset of $(n-1)$ coordinate axes. If there is a Zariski dense sequence of points $(x_{1,i},\dots, x_{n,i})\in V(\C)$ such that:
\begin{enumerate}
\item[(1)] either each $(x_{1,i},\dots, x_{n,i})$ is preperiodic under the coordinatewise action of $(f_1,\dots, f_n)$ and also $d_1=d_2=\cdots =d_n$,  
\item[(2)] or each $f_i\in\Qbar(x)$ (for $i=1,\dots, n$), $V$ is defined over $\Qbar$ and $\lim_{i\to\infty}\sum_{j=1}^n\hhat_{f_j}(x_{j,i})=0$,
\end{enumerate}
then the following must hold:
\begin{enumerate}
\item[(i)] either each $f_i(x)$ is conjugate to $x^{\pm d_i}$ or to $\pm T_{d_i}(x)$,
\item[(ii)] or each $f_i$ is a Latt\'es map (for $i=1,\dots, n$).
\end{enumerate} 
\end{theorem}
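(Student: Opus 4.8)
\textbf{Proof proposal for Theorem~\ref{general hypersurface theorem}.}

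The plan is to argue by contradiction: we assume there exists a hypersurface $H\subset(\bP^1)^n$ projecting dominantly onto each subset of $(n-1)$ coordinate axes, containing a Zariski dense sequence of points of small height (case~(2)) or of preperiodic points (case~(1), which reduces to~(2) after spreading out the base field and using that preperiodic points have height zero), and we assume that for at least one index---say $i=n$---the map $f_n$ is \emph{not} exceptional, i.e. $f_n$ is neither conjugate to $z^{\pm d_n}$ nor to $\pm T_{d_n}$ nor a Latt\`es map. We must derive a contradiction. First I would invoke the equidistribution machinery set up earlier in the paper: for each $i\in\{1,\dots,n\}$, projecting away the $i$-th coordinate gives a dominant finite map $H\to(\bP^1)^{n-1}$ and hence, via Theorem~\ref{Yuan equidistribution} applied along the small-height sequence, an equilibrium measure $\mu_i$ on $H$ pulled back from the product of equilibrium measures of $f_1,\dots,\widehat{f_i},\dots,f_n$. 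The content of Section~\ref{section equal measures} (which I would quote) is that all these measures $\mu_1,\dots,\mu_n$ coincide; call the common measure $\mu$. The support of $\mu$ is then, on the one hand, the preimage in $H$ of $J_{f_1}\times\cdots\times J_{f_{n-1}}$ under the projection forgetting $x_n$, and on the other hand the preimage of $J_{f_1}\times\cdots\times J_{f_{n-2}}\times J_{f_n}$ under the projection forgetting $x_{n-1}$, and so on for every index.

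The heart of the argument is the local-symmetry analysis of Julia sets, which is where the non-exceptionality of $f_n$ is used. Fixing a point $P=(a_1,\dots,a_n)\in\Supp(\mu)$ at which $H$ is smooth and the relevant projections are local isomorphisms (a Zariski-dense, hence $\mu$-generic, set of such points exists), the equality of the two descriptions of $\Supp(\mu)$ forces a real-analytic relation: locally near $P$, the condition ``$(x_1,\dots,x_{n-1})$ moves in $J_{f_1}\times\cdots\times J_{f_{n-1}}$ along $H$'' must be equivalent to ``$(x_1,\dots,x_{n-2},x_n)$ moves in $J_{f_1}\times\cdots\times J_{f_{n-2}}\times J_{f_n}$ along $H$.'' Using $H$ as the graph of a holomorphic function expressing $x_n$ in terms of $(x_1,\dots,x_{n-1})$ near $P$, this says that a holomorphic map sends (a piece of) $J_{f_{n-1}}$ onto (a piece of) $J_{f_n}$ while the other coordinates range over their Julia sets---so, after fixing generic values of $x_1,\dots,x_{n-2}$, we obtain a nonconstant holomorphic germ carrying an open subset of $J_{f_{n-1}}$ to an open subset of $J_{f_n}$. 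I would then appeal to the rigidity theorem for local symmetries of Julia sets (the analogue of Levin--Przytycki / the results used in \cite{GNY}, developed in Section~\ref{section Julia set}): a non-exceptional rational function has Julia set with no nontrivial local conformal symmetries except those coming from the dynamics itself (i.e. branches of $f_n^{-k}\circ f_{n-1}^{\ell}$). This pins down the germ to be, up to iterates, a local branch of a map intertwining $f_{n-1}$ and $f_n$, which (again by the classification, since $f_n$ is non-exceptional, so $f_{n-1}$ cannot be exceptional either) forces $f_{n-1}$ and $f_n$ to share a common iterate up to conjugacy; running this over all pairs of indices, all the $f_i$ become conjugate to iterates of a single non-exceptional $f$, and correspondingly $H$ is (a component of) an intersection of preperiodic curves $C_{i,j}$ as in \eqref{boolean form}. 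Finally I would rule this out: if $H$ is cut out by such curves $C_{i,j}\subset\bP^1\times\bP^1$ and $H$ projects dominantly onto every $(n-1)$-fold subset of axes with $n>2$, a dimension count (a curve relation $C_{i,j}$ involving a genuine correspondence drops the dimension of the fiber over the $(n-1)$ remaining coordinates) shows $H$ cannot project dominantly onto all of them simultaneously---contradiction---unless some $C_{i,j}$ is a ``horizontal'' or ``vertical'' line, which is excluded because $H$ projects dominantly onto the two coordinates $(i,j)$ as well; this forces $n\le 2$, contradicting $n>2$.

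The main obstacle I anticipate is the rigidity step: extracting from the mere equality $\mu_{n-1}=\mu_n$ a genuine \emph{holomorphic} (not just measure-theoretic) germ between the two Julia sets, and then controlling it. The subtlety is that $H$ mixes all $n$ coordinates, so the ``symmetry'' relating $J_{f_{n-1}}$ and $J_{f_n}$ is only visible after slicing by generic fixed values of the other coordinates, and one must ensure the slice remains nondegenerate (the sliced relation stays nonconstant and the sliced piece of $H$ is still a graph). Managing the interplay between ``generic slice'' and ``$\mu$-generic point,'' and invoking the Julia-set rigidity in the correct local form (including treating the edge cases where a local branch could a priori be a rotation or have exceptional local behavior), is where the real work lies; this is precisely the part that is genuinely harder than in the $n=2$ case of \cite{GNY}, where no slicing is needed. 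A secondary technical point is the final dimension-count / combinatorial argument showing that the Boolean/fibered-product shape \eqref{boolean form} is incompatible with dominant projection onto every $(n-1)$-subset when $n>2$; this should be elementary but requires care with reducible intersections and with which $C_{i,j}$ are forced to be nontrivial correspondences versus lines.
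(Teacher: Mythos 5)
Your first step (equal measures $\hat\mu_1=\cdots=\hat\mu_n$ via Yuan's equidistribution) matches the paper, and your closing observation that a hypersurface of the form \eqref{boolean form} with $n>2$ cannot project dominantly onto every $(n-1)$-subset of axes is correct. The genuine gap is in the middle, exactly where you yourself flag the difficulty. After slicing, you obtain a single holomorphic germ carrying a piece of $J_{f_{n-1}}$ to a piece of $J_{f_n}$, and you then invoke a rigidity principle asserting that such a germ must be a branch of the dynamics ($f_n^{-k}\circ f_{n-1}^{\ell}$), hence that the maps share an iterate and $H$ has the form \eqref{boolean form}. No such one-germ rigidity statement is available: Levin's theorem (the tool the paper uses) only says that a non-exceptional map admits no \emph{infinite nontrivial family} of local symmetries of its Julia set; a single local biholomorphism matching two Julia sets locally is far weaker and does not by itself produce a semiconjugacy, a shared iterate, or the fibered form of $H$. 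The paper has to manufacture an infinite family of symmetries by hand: it linearizes each $f_i$ at a repelling fixed point $a_i$ and a branch $g$ of $f_{n+1}^{-1}\circ f_{n+1}\circ f_{n+1}$ at $b_0$, builds the maps $g^{j_\ell}\circ h\circ(f_1^{-j_{1,\ell}},\dots,f_n^{-j_{n,\ell}})$, proves normality, and uses Levin via Lemma~\ref{identical symmetry} to force these maps to stabilize to the identity; the resulting functional equation for $h$ (together with the arithmetic condition \eqref{arithmetic condition} on the exponents) shows $H$ would have to be of the form $\bP^1\times H_0$, contradicting the dominant projections. Nothing in your sketch replaces this mechanism.

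A second, related omission: to run that construction one needs a point of $H$ \emph{all} of whose coordinates are preperiodic (repelling fixed points in the first $n-1$ coordinates and a coordinate $b_0$ mapping to a fixed point of the last map), at which $H$ is a graph with nonvanishing partials. Producing such a point is precisely Proposition~\ref{iff proposition}, whose proof requires the classical and arithmetic Hodge index theorems (Yuan--Zhang) plus, in case (1), a Yuan--Zhang specialization argument; your proposal never addresses this, and your remark that case (1) "reduces to (2) after spreading out the base field" glosses over the same specialization issue (Zariski density of preperiodic points must survive specialization, which uses $d_1=\cdots=d_n$ and equidistribution of subvarieties). Finally, note that the paper's use of the single sliced symmetry $h(\underline a,\cdot)$ (Proposition~\ref{equal currents} and Corollary~\ref{non-exceptional}) is only for the easier implication that if one $f_i$ is exceptional then all are, and of the same type; it is not, and cannot be, the engine of the contradiction in the non-exceptional case.
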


%%%%%%%%%%%%%%%%%%%%%%%%%%%%%%%%%%%%%%%%%%%%%%%%%%%%%%%%%%%%%%%%%%%%%%%%%%%%%%%

\subsection{Our main results as consequences of the technical result}

%%%%%%%%%%%%%%%%%%%%%%%%%%%%%%%%%%%%%%%%%%%%%%%%%%%%%%%%%%%%%%%%%%%%%%%%%%%%%%%

We show next how to derive Theorems~\ref{main result}~and~\ref{general DMM result} from Theorem~\ref{general hypersurface theorem}.

\begin{proof}[Proof of Theorem~\ref{main result}.]
As shown in Proposition~\ref{prop first reduction}, it suffices to prove Theorem~\ref{main result} for irreducible hypersurfaces $V$, which project dominantly onto each subset of $(n-1)$ coordinate axes of $(\bP^1)^n$. Since no $f_i$ is exceptional, then Theorem~\ref{general hypersurface theorem} yields that the case of such hypersurfaces is vacuously true when $n>2$. The case of curves $V\subset (\bP^1)^2$ is proven in \cite[Theorem~1.1]{GNY}, which concludes our proof.
\end{proof}

\begin{proof}[Proof of Theorem~\ref{general DMM result}.]
Again using Proposition~\ref{prop first reduction}, it suffices to prove Theorem~\ref{main result} for irreducible hypersurfaces $V$, which project dominantly onto each subset of $(n-1)$ coordinate axes of $(\bP^1)^n$. The case $n=2$ was already proven in \cite[Theorem~1.3]{GNY}; so, from now on, we assume $n>2$. Then Theorem~\ref{general hypersurface theorem} yields that 
\begin{enumerate}
\item[(i)] either for each $i=1,\dots, n$ we have that  $f_i(x)=\nu_i^{-1}(x)\circ x^{\pm d}\circ \nu_i(x)$ or $f_i(x)=\nu_i^{-1}(x)\circ \left(\pm T_{d}(x)\right)\circ \nu_i(x)$ for some automorphisms $\nu_i:\bP^1\lra \bP^1$,
\item[(ii)] or each $f_i$ is a Latt\'es map corresponding to some elliptic curve $E_i$ (for $i=1,\dots, n$).
\end{enumerate} 
If condition~(i) is satisfied, then at the expense of replacing $V$ by $\tilde{\nu}(V)$, where $\tilde{\nu}$ is the automorphism of $(\bP^1)^n$ given by 
$$\tilde{\nu}(x_1,\dots, x_n):=(\nu_1(x_1),\dots, \nu_n(x_n)),$$  
we may assume that each $f_i(x)$ is either $x^{\pm d}$ or  $\pm T_{d}(x)$. Next, let $\mu:\bG_m^n\lra (\bP^1)^n$ be the morphism given by 
$$\mu(x_1,\dots,x_n)=:(\mu_1(x_1),\dots, \mu_n(x_n)),$$
for rational functions $\mu_i$ which are: 
\begin{itemize}
\item $\mu_i(x)=x$ if $f_i(x)=x^{\pm d}$; and 
\item $\mu_i(x)=x+\frac{1}{x}$ if $f_i(x)=\pm T_{d}(x).$
\end{itemize}
Then there exists an irreducible subvariety $W$ of $\mu^{-1}(V)\subset \bG_m^n$ (projecting dominantly onto $V$ through the map $\mu$), which contains a Zariski dense set of preperiodic points under the action of $\Phi:\bG_m^n\lra \bG_m^n$ given by 
$$(x_1,\dots, x_n)\mapsto \left(\pm x_1^{\pm d},\dots, \pm x_n^{\pm d}\right).$$ 
Hence, $W$ contains a Zariski dense set of torsion points of $\bG_m^n$. Laurent's theorem \cite{Laurent} (the original Manin-Mumford conjecture for tori) yields that $W$ is a subtorus, thus preperiodic under the action of $\Phi$. This proves that $V=\mu(W)$  is preperiodic under the action of 
$$(x_1,\dots, x_n)\mapsto (f_1(x_1),\dots, f_n(x_n)),$$
as desired.

Now, we assume condition~(ii) is verified and so, each $f_i$ is a Latt\'es map which satisfies $p_i\circ \psi_i=f_i\circ p_i$ where $p_i:E_i\lra \bP^1$ and $\psi_i:E_i\lra E_i$ are morphisms satisfying $\deg(\psi_1)=\deg(\psi_2)=\cdots = \deg(\psi_n)$ because the Latt\'es maps $f_i$ have the same degree. Then there exists an irreducible component $W$ of $p^{-1}(V)\subset \tilde{E}:=\prod_{i=1}^n E_i$ (where $p:\tilde{E}\lra (\bP^1)^n$ is the morphism given by $p_1\times \cdots \times p_n$) with the property that it contains a Zariski dense set of (smooth) points $P$ which are preperiodic under the action of the endomorphism $\tilde{\psi}$ of $\tilde{E}$ given by $\psi_1\times \cdots \psi_n$, and moreover, the tangent space of $W$ at $P$ is preperiodic under the induced action of $\tilde{\psi}$ on ${\rm Gr}_{\dim(W)}\left(T_{\tilde{E},P}\right)$, where $T_{\tilde{E},P}$ is the tangent space of $\tilde{E}$ at $P$ and ${\rm Gr}_{\dim(W)}\left(T_{\tilde{E},P}\right)$ is the corresponding Grassmannian. Since each $\psi_i$ is an isogeny of $E_i$ of same degree, we get that $\tilde{\psi}$ is a polarizable endomorphism of $\tilde{E}$ and so, \cite[Theorem~2.1]{GTZ} yields that $W$ is preperiodic under the action of $\tilde{\psi}$. Therefore $V=p(W)$ is preperiodic under the action of 
$$(x_1,\dots, x_n)\mapsto (f_1(x_1),\dots, f_n(x_n)),$$
as desired.
\end{proof}

%%%%%%%%%%%%%%%%%%%%%%%%%%%%%%%%%%%%%%%%%%%%%%%%%%%%%%%%%%%%%%%%%%%%%%%%%%%%%%

\subsection{Strategy for our proof}
\label{subsection proof strategy}

%%%%%%%%%%%%%%%%%%%%%%%%%%%%%%%%%%%%%%%%%%%%%%%%%%%%%%%%%%%%%%%%%%%%%%%%%%%%%%

The remaining sections of our paper are dedicated to proving Theorem~\ref{general hypersurface theorem}. The setup is as follows: 
\begin{itemize}
\item $n>2$ and  $H\subset (\bP^1)^n$ is a hypersurface projecting dominantly onto each subset of $(n-1)$ coordinate axes; 
\item $f_1,\dots, f_{n}$ are rational functions of degrees larger than $1$ acting coordinatewise on $(\bP^1)^n$; and 
\item $H$ contains a Zariski dense set either of preperiodic points or of points of small height (see~(2) in Theorem~\ref{main result}) under the action of $(x_1,\dots, x_n)\mapsto (f_1(x_1),\dots, f_n(x_n))$. 
\end{itemize}
If at least one of the functions $f_i$ is not exceptional, then we will derive a contradiction. Now, if some $f_i$ is conjugated to a monomial or  $\pm$Chebyshev polynomial, then we prove that each of the $n$ rational functions must be conjugated to a monomial or $\pm$Chebyshev polynomial. Similarly, if one of the $f_i$'s is a Latt\'es map, then we prove that each $f_i$ must be a Latt\'es map. We obtain this goal (see Theorem~\ref{measure to periodicity}) by showing a similitude between the Julia sets of each one of the rational functions $f_i$. In turn, the relation between the Julia sets is a consequence of a powerful equidistribution theorem for points of small height. 

More precisely, using the equidistribution theorem of \cite{Yuan} for points of small height on a variety (see \cite{C-L} for the case of curves and also \cite{BR} and \cite{favre-rivera06} for the case of $\bP^1$), we prove that under the above hypotheses for $H$ and the $f_i$'s, then the measures $\hat{\mu}_i$ induced on $H$ by the invariant measures corresponding to the dynamical systems 
$$\left((\bP^1)^{n-1}, f_1\times \cdots f_{i-1}\times f_{i+1}\times \cdots f_n\right)$$ 
are equal (for each $i=1,\dots, n$). Using a careful study of the local analytic maps which preserve (locally) the Julia set of a rational map (which is not exceptional), we obtain the conclusion of Theorem~\ref{main result}. Even though our arguments resemble the ones we employed in \cite{GNY} to treat the case of plane curves (i.e., $n=2$), there are significant new complications in our analysis. 

Indeed, using Yuan's arithmetic equidistribution theorem \cite{Yuan} for points with small height on a space of
dimension $n\geq 2$, we first get connections for the $(n-1, n-1)$-currents (coming from
dynamics) on a hypersurface $H\subset (\bP^1)^n$. From these connections, we are able
to construct many symmetries for the aforementioned $(n-1,n-1)$-current. A further analysis of the symmetries for such an 
$(n-1,n-1)$-current yields additional symmetries of the Julia set on the  $1$-dimensional slices of $(\bP^1)^n$. Applying the rigidity of the
symmetries of the Julia set on the $1$-dimensional slices, we are able to
derive the rigidity of the symmetries of the entire $(n-1,n-1)$-current, from
which we derive the desired conclusion regarding $H$ and the dynamical system $(f_1,\dots, f_n)$ (see the proof of Theorem~\ref{measure to periodicity}). It  is precisely the study of the rigidity of this $(n-1,n-1)$-current (for $n>2$) which  provides the new proof of Medvedev's result \cite{Alice}, which otherwise could 
not have been obtained from the arguments from our previous paper \cite{GNY}.

Also, in order to finish the proof of Theorem~\ref{general hypersurface theorem} by showing that the hypotheses of Theorem~\ref{measure to periodicity} are met,   we need to know that for a hypersurface $H\subset (\bP^1)^n$ as in Theorem~\ref{general hypersurface theorem}, 
for each point $(a_1, \dots, a_n)\in H$, 
\begin{equation}
\label{iff statement 1}
\text{if $a_1,\dots, a_{n-1}$ are preperiodic, then also $a_n$ is preperiodic.}
\end{equation} 
If $n=2$, this fact was known for quite some time (see \cite{Mimar} which publishes the findings of Mimar's PhD thesis \cite{Mimar-thesis} from 20 years ago). However, if $n>2$, in order to prove  \eqref{iff statement 1} (see our Proposition~\ref{iff proposition} in the case each $f_i$ and also $H$ are defined over $\Qbar$), we need to use both the classical Hodge Index Theorem (proved by Faltings \cite{Faltings-Hodge} and by Hriljac \cite{Hriljac} for arithmetic surfaces and proved by Moriwaki \cite{Moriwaki} for higher dimensional arithmetic varieties) and also the new Arithmetic Hodge Index Theorem (proved by Yuan and Zhang \cite{Yuan-Zhang-published}). Furthermore, in order to derive \eqref{iff statement 1} in the general case (over $\C$) we need a specialization argument based on a result of Yuan-Zhang \cite{Yuan-Zhang-1, Yuan-Zhang-2} regarding the specialization of a Zariski dense set of preperiodic points for a polarizable endomorphism defined over a base curve.

%##############################################################################
%##############################################################################

\section{Complex dynamics and height functions}
\label{section Julia set}
In this section, we introduce the Julia set of a rational function, some of its properties and also the arithmetic height functions associated to an algebraic  dynamical system. 

%##############################################################################

\subsection{The Julia set}  

%%%%%%%%%%%%%%%%%%%%%%%%%%%%%%%%%%%%%%%%%%%%%%%%%%%%%%%%%%%%%%%%%%%%%%%%%%%%%%%

Let $f:\bP^1\to \bP^1$ be a rational function defined over $\C$ of degree $d_f\geq 2$.  The {\em Julia set} $J_f$ is the set of points $x\in \bP^1_\C$ for which the dynamics is chaotic under the iteration of $f$.  The Julia set $J_f$  is closed, nonempty and invariant under $f$. Let $x$ be a periodic point in a cycle of exact period $n$; then the {\em multiplier} $\lambda$ of this cycle (or of the periodic point $x$) is the derivative of $f^n$ at $x$. A cycle is {\em repelling} if its multiplier has absolute value greater than $1$. All but finitely many cycles of $f$ are repelling, and repelling cycles are in the Julia set $J_f$. Locally, at a repelling fixed point $x$ with multiplier $\lambda$, we can conjugate $f$ to the linear map $z\to \lambda \cdot z$ near $z=0$ (note that $\lambda\ne 0$ since the point is assumed to be repelling). For more details about the dynamics of a rational function, we refer the reader to Milnor's book \cite{Milnor:book}. 

There is a probability measure $\mu_f$ on $\bP^1_\C$ associated to $f$, which is the unique $f$-invariant measure achieving maximal entropy $\log d_f$; see \cite{Bro, Lyu1, FLM, Man}.  Also $\mu_f$ is the unique measure satisfying 
\begin{equation}\label{measure growth}
\mu_f(f(A))=d_f\cdot \mu_f(A)
\end{equation} for any Borel set $A\subset \bP^1_\C$ with $f$ injective when restricted on $A$.  The support of $\mu_f$ is $J_f$, and $\mu_f(x)=0$ for any $x\in \bP^1_\C$. Moreover, $\mu_f$ has continuous potential, in the sense that locally there is a continuous subharmonic function $u(x)$ such that the $(1,1)$-current satisfies 
   $$dd^cu(x)=d\mu_f(x),$$
and then \eqref{measure growth} is equivalent to 
   $$dd^cu\circ f(x)=d_f\cdot d\mu_f(x).$$  

%##############################################################################

\subsection{Measures on a hypersurface associated to a dynamical system}
\label{subsection measures 2}

%%%%%%%%%%%%%%%%%%%%%%%%%%%%%%%%%%%%%%%%%%%%%%%%%%%%%%%%%%%%%%%%%%%%%%%%%%%%%%%

Let 
 $$\hat f(x_1, \cdots, x_{n}):=(f_1(x_1), \cdots, f_{n}(x_{n}))$$
be an endormorphism of $(\P^1_\C)^{n}$ with $f_i$ being a rational function of degree $d_i\geq 2$ for $1\leq i\leq n$. For $i=1, \cdots, n$, denote 
\begin{equation}
\label{tilde f i}
\tilde f_i:=(f_1, \cdots, f_{i-1}, f_{i+1}, \cdots, f_{n})
\end{equation}
as an endormorphism of $(\P_{\C}^1)^{n-1}$ with invariant measure 
 \begin{equation}\label{measure tilde i}\tilde \mu_i:=\mu_{f_1}\times \cdots \mu_{f_{i-1}}\times \mu_{f_{i+1}}\times \cdots \mu_{f_{n}}.
 \end{equation}
 Let $H\subset(\bP^1_\C)^{n}$ be an irreducible hypersurface projecting dominantly onto any subset of $(n-1)$ coordinates, i.e., the canonical projections $\hat{\pi}_i:(\bP^1)^{n}\to (\bP^1)^{n-1}$ (where for each  $i=1,\cdots, n$, $\hat \pi_i$ is the projection of $(\bP^1)^n$ onto the $(n-1)$ coordinates forgetting the $i$-th coordinate axis) restrict to dominant morphisms $\left(\hat{\pi}_i\right)|_H:H\lra (\bP^1)^{n-1}$.   By abuse of notation, we denote the restriction $(\hat \pi_i)|_H$ also by $\hat \pi_i$. We define probability measures $\hat{\mu}_{i}$ (for $i=1,\cdots, n$) on $H$ corresponding to the dynamical system $\left((\bP^1_{\C})^{n-1}, \tilde f_i\right)$. More precisely,  for each $i=1, \cdots, n$, we  pullback $\tilde \mu_i$ by $\hat \pi_i$ to get a measure $\hat \pi_i^*\mu_i$ on $H$ so that 
   $$\hat \pi_i^*\tilde \mu_i(A):=\tilde \mu_i(\hat \pi_i(A))$$
for any Borel set $A\subset H$ such that $\hat \pi_i$ is injective on $A$. Another way to interpret this is that for $t=(x_1, \cdots, x_{n})\in H$, we have that $d(\hat \pi_i^*\tilde \mu_i)(t)$ is an $(n-1,n-1)$-current on $H$ given by
   $$d\hat \pi_i^*\tilde \mu_i(t)=dd^cu_1(x_1)\wedge \cdot\cdot dd^cu_{i-1}(x_{i-1})\wedge dd^cu_{i+1}(x_{i+1})\cdot\cdot \wedge dd^cu_{n}(x_{n})$$
where $u_j$ is a locally defined continuous subharmonic function  with $dd^c u_j=d\mu_{f_j}$ for each $j=1,\dots, n$.     
    Hence we get the probability measures on $H$:
  $$\hat{\mu}_{i}:=\hat \pi_i^*\tilde \mu_i/\deg(\hat \pi_i)\text{ for $i=1, \cdots, n$.}$$
Similarly, one has that 
   $$\tilde f_i^*\tilde \mu_i=d_1\cdots d_{i-1}\cdot d_{i+1}\cdots d_{n}\cdot \tilde \mu_i \text{ for $i=1, \cdots, n$.}$$

%##############################################################################

\subsection{Symmetries of the Julia set}\label{sym}

%%%%%%%%%%%%%%%%%%%%%%%%%%%%%%%%%%%%%%%%%%%%%%%%%%%%%%%%%%%%%%%%%%%%%%%%%%%%%%%

Let $\zeta$ be a meromorphic function on some disc $B(a, r)$ of radius $r$ centred at a point $a\in J_f$. We say that $\zeta$ is a {\em symmetry} on $J_f$ if it satisfies the following properties:
\begin{itemize}
\item $x\in B(a, r)\cap J_f$ if and only if $\zeta(x)\in \zeta(B(a,r))\cap J_f$; and 
\item if $J_f$ is either a circle, a line segment, or the entire sphere, there is a constant $\alpha>0$ such that for any Borel set $A$ where $\zeta|_A$ is injective, one has $\mu_f(\zeta(A))=\alpha \cdot \mu_f(A)$. 
\end{itemize}
A family $\mathcal{S}$ of symmetries of $J_f$ on $B(a, r)$ is said to be {\em nontrivial} if $\mathcal{S}$ is normal on $B(a,r)$ and no infinite sequence $\{\zeta_n\}\subset \mathcal{S}$ converges to a constant function. A rational function is \emph{post-critically finite} (sometimes called critically finite), if each of its critical points has finite forward orbit, i.e. all critical points are preperiodic. According to Thurston \cite{Thu, DH}, there is an orbifold structure on $\bP^1$ corresponding to each post-critically finite map. A rational function is post-critically finite with {\em parabolic} orbifold if and only if it is exceptional; or equivalently its Julia set is smooth (a circle, a line segment or the entire sphere) with smooth maximal entropy measure on it; see \cite{DH}.

%%%%%%%%%%%%%%%%%%%%%%%%%%%%%%%%%%%%%%%%%%%%%%%%%%%%%%%%%%%%%%%%%%%%%%%%%%%%%%%

\subsection{The height functions}\label{height subsection} 

%%%%%%%%%%%%%%%%%%%%%%%%%%%%%%%%%%%%%%%%%%%%%%%%%%%%%%%%%%%%%%%%%%%%%%%%%%%%%%%

Let $K$ be a number field and $\Kbar$ be the algebraic closure of $K$. The number field $K$ is naturally equipped with a set $\OK$ of pairwise inequivalent nontrivial absolute values, together with positive integers $N_v$ for each $v\in \OK$ such that
\begin{itemize}
\item for each $\alpha \in K^*$, we have $|\alpha|_v=1$ for all but finitely many places $v\in \OK$. 
\item every $\alpha \in K^*$ satisfies the {\em product formula}
\begin{equation}\label{product formula}
   \prod_{v\in \OK} |\alpha|_v^{N_v}=1
   \end{equation}
\end{itemize}
For each $v\in \OK$, let $K_v$ be the completion of $K$ at $v$, let $\Kbar_v$ be the algebraic closure of $K_v$ and let $\C_v$ denote the completion of $\Kbar_v$. We fix an embedding of $\Kbar$ into $\C_v$ for each $v\in \OK$; hence we have a fixed extension of $|\cdot |_v$ on $\Kbar$. If $v$ is archimedean, then $\C_v\cong \C$. Let $f\in K(z)$ be a rational function with degree $d\geq 2$. There is a {\em canonical height} $\hhat_f$ on $\P^1(\Kbar)$ given by 
 \begin{equation}\label{C-S height}
\hhat_f(x):=\frac{1}{[K(x):K]} \lim_{n\to \infty}\sum_{T\in \Gal(\Kbar/K)\cdot X}~ \sum_{v\in \OK}N_v\cdot \frac{ \log\|F^n(T)\|_v}{d^n}
\end{equation}
 where $F:K^2\to K^2$ and $X$ are homogenous lifts of $f$ and respectively $x\in \P^1(\Kbar)$, while $\|(z_1, z_2)\|_v:=\max\{|z_1|_v, |z_2|_v\}$. By product formula (\ref{product formula}), the height $\hhat_f$ does not depend on the choice of the homogenous lift $F$ and therefore it is well-defined. As proven in   \cite{C-S}, $\hhat_f(x)\geq 0$ with equality if and only if $x$ is preperiodic under the iteration of $f$.

%##############################################################################
%##############################################################################

  \section{Adelic metrized line bundles and the equidistribution of points of small height}
\label{section equidistribution setup}

%%%%%%%%%%%%%%%%%%%%%%%%%%%%%%%%%%%%%%%%%%%%%%%%%%%%%%%%%%%%%%%%%%%%%%%%%%%%%%%
%%%%%%%%%%%%%%%%%%%%%%%%%%%%%%%%%%%%%%%%%%%%%%%%%%%%%%%%%%%%%%%%%%%%%%%%%%%%%%%

  In this section, we setup the  height functions and state the equidistribution theorem for points of small height, which would be used later in proving the main theorems of this article. The main tool we use here is the arithmetic equidistribution theorem for points with small height on algebraic varieties (see \cite{Yuan}).

%#########################################################################
  
\subsection{Adelic metrized line bundle} 
\label{subsection adelic metrized line bundles}

%%%%%%%%%%%%%%%%%%%%%%%%%%%%%%%%%%%%%%%%%%%%%%%%%%%%%%%%%%%%%%%%%%%%%%%%%%%%%%

Let $\cL$ be an ample line bundle of an irreducible projective variety $V$ over a number field $K$. As in Subsection~\ref{height subsection}, $K$ is naturally equipped with absolute values $|\cdot|_v$ for $v\in \OK$. A {\em metric} $\|\cdot\|_v$ on $\cL$ is a collection of norms, one for each $t\in V(K_v)$, on the fibres  $\cL(t)$ of the line bundle, with 
   $$\|\alpha s(t)\|_v=|\alpha|_v\|s(t)\|_v$$
for any section $s$ of $\cL$. An {\em adelic (semipositive) metrized line bundle} $\cLbar=\{\cL, \{\|\cdot\|_v\}_{v\in \OK}\}$ over $\cL$ is a collection of metrics on $\cL$, one for each place $v\in \OK$, satisfying certain continuity and coherence conditions; see \cite{Zhang:line, Zhang:metrics, Yuan-Zhang-published}. 

There are various adelic metrized line bundles; the simplest adelic (semipositive) metrized line bundle is the line bundle $\O_{\P^1}(1)$ equipped with metrics $\|\cdot\|_v$ (for each $v\in \OK$), which evaluated at a section $s:=u_0Z_0 + u_1Z_1$ of $\O_{\bP^1}(1)$ (where $u_0, u_1$ are scalars and $Z_0, Z_1$ are the canonical sections of $\O_{\bP^1}(1)$) is given by 
   $$\|s([z_0: z_1])\|_v:=\frac{|u_0z_0+u_1z_1|_v}{\max\{|z_0|_v, |z_1|_v\}}.$$

Furthermore, we can define other metrics on $\O_{\P^1}(1)$ corresponding to a rational function $f$ of degree $d\geq 2$ defined over $K$. We fix  a homogenous lift  $F: K^2\to K^2$ of $f$ with homogenous degree $d$. For $j\geq 1$, write $F^j=(F_{0, j}, F_{1, j})$. For each place $v\in \OK$, we can define a metric on $\O_{\P^1}(1)$ as
\begin{equation}\label{metric of F}
\|s([z_0: z_1])\|_{v, F, j}:=\frac{|u_0z_0+u_1z_1|_v}{\max\{|F_{0, j}(z_0, z_1)|_v, |F_{1, j}(z_0, z_1)|_v\}^{1/d^j}},
\end{equation}
where $s=u_0Z_0+u_1Z_1$ with $u_0, u_1$ scalars and $Z_0, Z_1$ canonical sections of $\O_{\P^1}(1)$. Hence $\{\O_{\P^1}(1),\{ \|\cdot\|_{v, F, j}\}_{v\in \OK}\}$ is an adelic metrized line bundle over $\O_{\P^1}(1)$. 

A sequence $\{\cL, \{\|\cdot\|_{v,j}\}_{v\in \OK}\}_{j\geq 1}$ of adelic  metrized line bundles over an ample line bundle $\cL$ on a variety $V$ is convergent to $\{\cL, \{\|\cdot\|_v\}_{v\in \OK}\}$, if for all $j$ and all but finitely many $v\in\OK$, we have that  $\|\cdot\|_{v,j}=\|\cdot\|_v$, and moreover,  $\left\{\log\frac{\|\cdot\|_{v,j}}{\|\cdot\|_v}\right\}_{j\geq 1}$ converges to $0$ uniformly on $V(\C_v)$ for all $v\in \OK$. The limit $\{\cL, \{\|\cdot\|_v\}_{v\in \OK}\}$ is an adelic metrized line bundle. Also, the tensor product of two (adelic) metrized line bundles is again a (adelic) metrized line bundle. 

A typical example of a convergent sequence of adelic metrized line bundles is $\{\{\O_{\P^1}(1),\{ \|\cdot\|_{v, F, j}\}_{v\in \OK}\}\}_{j\geq 1}$
which converges to the metrized line bundle denoted by 
\begin{equation}\label{line F}
\overline{\cL}_F:=\{\O_{\P^1}(1), \{\|\cdot\|_{v,F}\}_{v\in\OK}\}
\end{equation}
    (see \cite{BR} and also see  \cite[Theorem~2.2]{Zhang:metrics} for the more general case of a polarizable endomorphism $f$ of a projective variety). 

As usual, we let $\tilde f=(f_1, \cdots, f_n)$ with $f_i$ being a rational function of degree $d_i\ge 2$ defined over the number field $K$ for $1\leq i\leq n$. Fix a homogenous lift $F_i$ for each $f_i$ and denote 
   $$\tilde F:=(F_1, \cdots, F_n).$$  
We let $\pi_i$ be the $i$-th coordination projection map from $(\P^1)^n$ to $\P^1$. We construct an adelic metrized line bundle on $(\P^1)^n$ as follows
\begin{equation}\label{mlb F}
\overline \cL_{\tilde F}:=\{\cL_{\tilde F}, \|\cdot \|_{v, {\tilde F}}\}:=(\pi_1^*\overline{\mathcal{L}}_{F_1})\tensor (\pi_2^*\overline{\mathcal{L}}_{F_2})\cdots \tensor (\pi_n^*\overline{\mathcal{L}}_{F_n}).
\end{equation}
where the metric $\|\cdot \|_{v, \tilde F}$ on $ \cL_{\tilde F}$ is the one inherited from the metrics $\|\cdot\|_{v, F_i}$ on $\O_{\P^1}(1)$ for $1\leq i\leq n$.

%#######################################################################

\subsection{Equidistribution of small points}
\label{subsection equidistribution small points}

%%%%%%%%%%%%%%%%%%%%%%%%%%%%%%%%%%%%%%%%%%%%%%%%%%%%%%%%%%%%%%%%%%%%%%%%%%%

For a semipositive metrized line bundle $\cLbar$ on a (irreducible and projective) variety $V$ defined over a number field $K$, the height for $t\in V(\Kbar)$ is given by 
\begin{equation}\label{points height}
\hhat_{\cLbar}(t)=\frac{1}{|\Gal(\Kbar/K)\cdot t|}\sum_{y\in\Gal(\Kbar/K)\cdot t}~\sum_{v\in \OK}-N_v\cdot \log\|s(y)\|_v
\end{equation}
where $|\Gal(\Kbar/K)\cdot t|$ is the number of points in the Galois orbits of $t$, and $s$ is any meromorphic section of $\cL$ with support disjoint from $\Gal(\Kbar/K)\cdot t$. A sequence of points $t_j\in V(\Kbar)$ is {\em small}  if  $\lim_{j\to \infty} \hhat_{\cLbar}(t_j)=\hhat_{\cLbar}(V)$, and is {\em generic} if no subsequence of $t_j$ is contained in a proper Zariski closed subset of $V$; see \cite{Zhang:metrics} for more details on constructing the height for any irreducible subvariety $Y$ of $V$ (which is denoted by $\hhat_\cLbar(Y)$). We use the following equidistribution result due to Yuan \cite{Yuan}  in the case of an arbitrary projective variety. 

\begin{theorem}\cite[Theorem 3.1]{Yuan}\label{Yuan equidistribution}
Let $V$ be a projective irreducible variety of dimension $n$ defined over a number field $K$, and let $\cLbar$ be a metrized line bundle over $V$ such that $\cL$ is ample and the metric is semipositive. Let $\{t_n\}$ be a generic sequence of points in $V(\Kbar)$ which is small. Then for any $v\in \OK$, the Galois orbits of the sequence $\{t_j\}$ are equidistributed in the analytic space $V^{an}_{\C_v}$ with respect to the probability measure $d\mu_v=c_1(\cLbar)^n_v/\deg_\cL(V)$. 
\end{theorem}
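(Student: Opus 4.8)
\textbf{Proof strategy for Theorem~\ref{Yuan equidistribution}.} The assertion is \cite[Theorem~3.1]{Yuan}, and the plan is to reproduce Yuan's variational argument. First I would fix a place $v\in\OK$ and a continuous real-valued function $g$ on the compact analytic space $V^{an}_{\C_v}$, and reduce the theorem to the single limit
\[
\frac{1}{|O(t_j)|}\sum_{y\in O(t_j)} g(y)\;\longrightarrow\; \int_{V^{an}_{\C_v}} g\,d\mu_v,\qquad O(t_j):=\Gal(\Kbar/K)\cdot t_j,
\]
since this is exactly weak-$*$ convergence of the Galois-orbit measures. After approximating $g$ uniformly by model functions, I would introduce, for small $\epsilon>0$, the perturbed adelic line bundle $\overline{\cL}(\epsilon g)$: the same underlying ample $\cL$, with the metric at $v$ multiplied by $e^{-\epsilon g}$ and unchanged at every other place. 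For every closed point $t$ one then has the exact identity $\hhat_{\overline{\cL}(\epsilon g)}(t)=\hhat_{\cLbar}(t)+\epsilon|O(t)|^{-1}\sum_{y\in O(t)}g(y)$.

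Next I would assemble three inequalities. \emph{(i) Genericity and smallness:} since $\{t_j\}$ is generic, only finitely many $t_j$ lie in any proper Zariski-closed subset, so $\liminf_j\hhat_{\overline{\cL}(\epsilon g)}(t_j)\ge \widehat{e}_1(\overline{\cL}(\epsilon g))$, the essential minimum; combining this with the hypothesis $\hhat_{\cLbar}(t_j)\to\hhat_{\cLbar}(V)$ gives $\hhat_{\cLbar}(V)+\epsilon\liminf_j|O(t_j)|^{-1}\sum_y g(y)\ge \widehat{e}_1(\overline{\cL}(\epsilon g))$. \emph{(ii) Zhang's inequality:} $\widehat{e}_1(\overline{\cL}(\epsilon g))\ge \widehat{\mathrm{vol}}(\overline{\cL}(\epsilon g))\big/\bigl((n+1)\deg_\cL(V)\bigr)$. \emph{(iii) Differentiability of the arithmetic volume} (Yuan's arithmetic bigness theorem): $\widehat{\mathrm{vol}}(\overline{\cL}(\epsilon g))=\widehat{\mathrm{vol}}(\cLbar)+(n+1)\epsilon\int_{V^{an}_{\C_v}}g\,c_1(\cLbar)^n_v+o(\epsilon)$, together with $\widehat{\mathrm{vol}}(\cLbar)=\widehat{c}_1(\cLbar)^{n+1}=(n+1)\deg_\cL(V)\,\hhat_{\cLbar}(V)$ since $\cLbar$ is semipositive.

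Chaining (i)--(iii), subtracting $\hhat_{\cLbar}(V)$, dividing by $\epsilon>0$ and letting $\epsilon\to 0^+$ would yield $\liminf_j|O(t_j)|^{-1}\sum_y g(y)\ge\int g\,d\mu_v$ with $d\mu_v=c_1(\cLbar)^n_v/\deg_\cL(V)$; running the same computation with $-g$ in place of $g$ gives the reverse bound for the $\limsup$, so the limit exists and equals $\int g\,d\mu_v$ for every continuous $g$, which is the desired equidistribution at $v$.

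The main obstacle is ingredient (iii): proving that the arithmetic volume function is continuous and differentiable along the family $\epsilon\mapsto\overline{\cL}(\epsilon g)$ with the stated derivative. This is the technical heart of \cite{Yuan} and rests on an arithmetic Hilbert--Samuel/bigness theorem for metrized line bundles (building on work of Abbes--Bouche, Zhang, Moriwaki and Chen), together with Chambert-Loir's construction of the measures $c_1(\cLbar)^n_v$ on Berkovich analytic spaces, which is exactly what allows the archimedean and non-archimedean places to be handled by one and the same argument. Steps (i), (ii) and the final passage to the limit are then essentially formal manipulations with heights and the model-function approximation of $g$.
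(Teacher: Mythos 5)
This statement is not proved in the paper: it is imported verbatim as \cite[Theorem~3.1]{Yuan}, so there is no internal argument to compare against. Your sketch correctly reproduces the proof given in Yuan's paper itself — the Szpiro--Ullmo--Zhang variational method (perturb the metric at $v$ by $e^{-\epsilon g}$, compare the liminf of heights of a generic small sequence with the essential minimum, apply the volume--essential-minimum inequality, and use Yuan's arithmetic bigness/differentiability theorem together with Chambert-Loir's measures at non-archimedean places) — so it is the same approach as the cited source, with the technical weight correctly located in the expansion of the arithmetic volume.
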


When $v$ is archimedean, $V_{\C_v}^{an}$ corresponds to $V(\C)$ and the curvature $c_1(\cLbar)_v$ of the metric $\|\cdot\|_v$ is given by $c_1(\cLbar)_v=\frac{\partial \overline{\partial}}{\pi i}\log \|\cdot\|_v$. If $v$ is a  non-archimedean place, then $V_{\C_v}^{an}$ is the Berkovich space associated to $V(\C_v)$, and Chambert-Loir \cite{C-L} constructed an analog for the  curvature on $V_{\C_v}^{an}$. The precise meaning of the equidistribution statement in Theorem~\ref{Yuan equidistribution} is that  
\begin{equation}
\label{limit statement 1}
\lim_{j\to \infty} \frac{1}{|\Gal(\Kbar/K)\cdot t_j|}\sum_{y\in \Gal(\Kbar/K)\cdot t_j} \delta_{y}=\mu_v,
\end{equation}
where $\delta_{y}$ is the point mass probability measure supported on $y\in V^{an}_{\C_v}$, while the limit from \eqref{limit statement 1} is the weak limit for the corresponding probability measures on the compact space $V^{an}_{\C_v}$. 

%%%%%%%%%%%%%%%%%%%%%%%%%%%%%%%%%%%%%%%%%%%%%%%%%%%%%%%%%%%%%%%%%%%%%%%%%%%%%%

\subsection{Some examples} 

%%%%%%%%%%%%%%%%%%%%%%%%%%%%%%%%%%%%%%%%%%%%%%%%%%%%%%%%%%%%%%%%%%%%%%%%%%%%%%%

For the dynamical system $(\bP^1, f)$ corresponding to a rational function $f$ defined over a number field $K$ and of degree $d_f\geq 2$, at an archimedean place $v$, it is well known that the curvature of the limit of the metrized line bundles 
  $$\{\O_{\P^1}(1),\{ \|\cdot\|_{v, F, j}\}_{v\in \OK}\}_{j\geq 1}$$
  is a $(1,1)$-current given by $d\mu_f$, which is independent on the choice of $F$. Combining the definition \eqref{C-S height} of the canonical height $\hhat_f$ of $f$,  with the height \eqref{points height}  of points for an adelic metrized line bundle and the definition (\ref{metric of F}, \ref{line F}) of  $\overline \cL_F$, we get
    $$\hhat_{{\overline \cL}_F}(x)=\hhat_f(x)$$
which is independent of the choice for the lift $F$ of $f$. 

We conclude this section by noting that in the case of the metrized line bundle $\cLbar_{\tilde F}$ on $(\P^1)^n$ associated to an endomorphism $\tilde f$ of $(\P^1)^n$ (see subsection \ref{subsection adelic metrized line bundles}),  at an archimedean place $v$, the $(n, n)$-current satisfies the formula:
\begin{equation}
\label{equation c_1}
c_1(\cLbar_{\tilde F})^n_v=n!\cdot {d\tilde \mu}, 
\end{equation}
where $\tilde \mu=\mu_{f_1}\times \cdots \times \mu_{f_n}$ is the invariant measure on $(\bP^1_{\C_v})^n$ associated to the endomorphism $\tilde f=(f_1, \cdots, f_n)$. To see this, we first notice that since $v$ is archimedean, then $\C_v=\C$ and so, by taking $\frac{\partial \overline{\partial}}{\pi i}\log \|\cdot\|_{v, \tilde F}$ we get  
\begin{equation}\label{chen class 1}
c_1(\cLbar_{\tilde F})_v=dd^c (u_1(x_1)+\cdots + u_n(x_n)),
\end{equation}
where $u_i(x_i)$ is a locally defined continuous subharmonic function on $\P_{\C_v}^1$ with $dd^c u_i=d\mu_{f_i}$ for $1\leq i\leq n$. Hence 
  $$c_1(\cLbar_{\tilde F})_v^n=n!\cdot dd^c u_1(x_1)\wedge \cdots \wedge dd^c u_n(x_n)=n!\cdot d\tilde \mu,$$
and so, the equality from \eqref{equation c_1} follows. Moreover, for a point $t=(a_1, \cdots, a_n)\in (\P^1)^n(\Kbar)$, from \eqref{mlb F} we see that
\begin{equation}
\label{canonical height metrized statement}
\hhat_{\overline \cL_{\tilde F}}(t)=\hhat_{f_1}(a_1)+\cdots+\hhat_{f_n}(a_n).
\end{equation}

%#######################################################################
%#######################################################################

\section{Measures and heights on a hypersurface}
\label{section equal measures}
 
In this section we study the measures and the corresponding heights on a hypersurface in $(\bP^1)^n$; this allows us to obtain two important technical ingredients (Theorem~\ref{equal measure thm} and Proposition~\ref{iff proposition}) which will later be used in proving Theorem~\ref{general hypersurface theorem}. So, let $\hat f=(f_1, \cdots, f_{n})$ be an endomorphism of $(\bP^1)^n$ defined over a number field $K$, with degrees $d_i\geq 2$ for each rational function $f_i$ (for $1\leq i\leq n$). Also, let $H\subset (\bP^1)^{n}$ be an irreducible hypersurface defined over $K$, which projects dominantly onto each subset of $(n-1)$ coordinate axes.

%%%%%%%%%%%%%%%%%%%%%%%%%%%%%%%%%%%%%%%%%%%%%%%%%%%%%%%%%%%%%%%%%%%%%%%%%%%%%%%

\subsection{Adelic metrized line bundles on the hypersurface}
\label{subsection adelic metrics hypersurface}

%%%%%%%%%%%%%%%%%%%%%%%%%%%%%%%%%%%%%%%%%%%%%%%%%%%%%%%%%%%%%%%%%%%%%%%%%%%%%%

For each $i=1,\dots, n$, as in \eqref{tilde f i}, we let $\tilde f_i$ be the endomorphism of $(\bP^1)^{n-1}$ given by forgetting the $i$-th coordinate axis (along with the action of $f_i$) in the dynamical system $\left((\bP^1)^n,\hat f\right)$. 
Let $\tilde F_i$ be a homogenous lift of $\tilde f_i$ as in Subsection~\ref{subsection measures 2} and then similar to \eqref{mlb F}, we construct an  adelic metrized line bundle $\overline \cL_{\tilde F_i}$ on $(\P^1)^{n-1}$ such that when $v$ is archimedean, we have  
  $$c_1(\overline \cL_{\tilde F_i})^{n-1}_v=(n-1)!\cdot d\tilde \mu_i$$
(for each $1\leq i\leq n$), where the probability measure $\tilde \mu_i$ on $(\P_{\C_v}^1)^{n-1}$ is the one appearing in \eqref{measure tilde i}. 

For each $i=1,\dots, n$, we recall from Subsection \ref{subsection measures 2} that the projection 
   $$\hat \pi_i:H\lra (\bP^1)^{n-1}$$
is the one given by forgetting the $i$-th coordinates; $\hat \pi_i$ is a finite, dominant morphism (due to our assumption on $H$). We let 
\begin{equation}
\label{cL hat F i}
\overline \cL_{{\hat F}_i}:=\hat \pi_i^*\overline \cL_{{\tilde F}_i}
\end{equation}
be an adelic metrized line bundle on $H$, which is the pullback of the adelic metrized line bundle $\overline \cL_{{\tilde F}_i}$ (on $(\P^1)^{n-1}$) by the morphism $\hat \pi_i$. 

%%%%%%%%%%%%%%%%%%%%%%%%%%%%%%%%%%%%%%%%%%%%%%%%%%%%%%%%%%%%%%%%%%%%%%%%%%%%%%

\subsection{Height functions on the hypersurface} 

%%%%%%%%%%%%%%%%%%%%%%%%%%%%%%%%%%%%%%%%%%%%%%%%%%%%%%%%%%%%%%%%%%%%%%%%%%%%%%

For each $i=1, \cdots, n$ and each $t=(a_1, \cdots, a_{n})\in H(\Kbar)$, using \eqref{canonical height metrized statement} we conclude that 
\begin{equation}
\label{equation height cL i}
\hhat_{\overline \cL_{{\hat F}_i}}(t)=\hhat_{f_1}(a_1)+\cdots+\hhat_{f_{i-1}}(a_{i-1})+\hhat_{f_{i+1}}(a_{i+1})+\cdots \hhat_{f_{n}}(a_{n}).
\end{equation}
Hence $\hhat_{\overline \cL_{{\hat F}_i}}(t)\geq 0$ with equality if and only if $a_j$ is preperiodic under $f_j$ for each $j\ne i$ with $1\leq j\leq n$. So, if the set of all $t\in H(\Kbar)$ for which $\hhat_{\overline \cL_{{\hat F}_i}}(t)=0$ is Zariski-dense on $H$, then each essential minima $e_j\left(\overline \cL_{{\hat F}_i}\right)$ (for $j=1,\dots, n$, defined as in \cite{Zhang:metrics}) are equal to $0$. Therefore, using the inequality from  \cite[Theorem 1.10]{Zhang:metrics}, we conclude that 
\begin{equation}\label{height of H}
\hhat_{\overline \cL_{{\hat F}_i}}(H)=0.
\end{equation}

%%%%%%%%%%%%%%%%%%%%%%%%%%%%%%%%%%%%%%%%%%%%%%%%%%%%%%%%%%%%%%%%%%%%%%%%%%%%%%%

\subsection{Equal measures on the hypersurface} 

%%%%%%%%%%%%%%%%%%%%%%%%%%%%%%%%%%%%%%%%%%%%%%%%%%%%%%%%%%%%%%%%%%%%%%%%%%%%%%%

Now we are ready to prove the following result. 
\begin{theorem}\label{equal measure thm}
Suppose that there is a generic sequence of points $t_j=(x_{1,j}, \cdots, x_{n, j})\in H(\Kbar)$ such that 
   $$\lim_{j\to\infty} \hhat_{f_1}(x_{1,j}) +\cdots+ \hhat_{f_{n}}(x_{n,j})=0.$$
 Then $\hat \mu_1=\hat \mu_2=\cdots =\hat \mu_{n}$.
\end{theorem}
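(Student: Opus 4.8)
The plan is to apply Yuan's arithmetic equidistribution theorem (Theorem~\ref{Yuan equidistribution}) to the \emph{same} generic sequence $\{t_j\}$ once for each of the $n$ adelic metrized line bundles $\overline{\cL}_{\hat F_i}$ on $H$, and to observe that the $i$-th application pins down the limiting distribution of the Galois orbits of $\{t_j\}$ as $\hat\mu_i$. Since the weak limit of the sequence of probability measures $\frac{1}{|\Gal(\Kbar/K)\cdot t_j|}\sum_{y\in\Gal(\Kbar/K)\cdot t_j}\delta_y$ on $H^{an}_{\C_v}$ is unique, all the $\hat\mu_i$ must coincide.

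First I would verify that $\{t_j\}$ satisfies the hypotheses of Theorem~\ref{Yuan equidistribution} relative to each $\overline{\cL}_{\hat F_i}$. Genericity is given. Smallness follows from the height identity \eqref{equation height cL i}, which gives $\hhat_{\overline{\cL}_{\hat F_i}}(t_j)=\sum_{k\ne i}\hhat_{f_k}(x_{k,j})$: this quantity is non-negative and bounded above by $\sum_{k=1}^{n}\hhat_{f_k}(x_{k,j})\to 0$, so together with $\hhat_{\overline{\cL}_{\hat F_i}}(H)=0$ from \eqref{height of H} we get $\hhat_{\overline{\cL}_{\hat F_i}}(t_j)\to\hhat_{\overline{\cL}_{\hat F_i}}(H)$. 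The underlying line bundle $\cL_{\hat F_i}=\hat\pi_i^*\cL_{\tilde F_i}$ is ample on $H$, being the pullback of an ample line bundle under the finite morphism $\hat\pi_i$, and its metric is semipositive because it is pulled back from the semipositive metric of $\overline{\cL}_{\tilde F_i}$. Hence Theorem~\ref{Yuan equidistribution} applies (with $\dim H=n-1$ in the role of the dimension), and at any archimedean place $v$ the Galois orbits of $\{t_j\}$ equidistribute toward $c_1(\overline{\cL}_{\hat F_i})^{\,n-1}_v/\deg_{\cL_{\hat F_i}}(H)$.

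The next step is to recognize this limiting measure as $\hat\mu_i$. Since the metric of $\overline{\cL}_{\hat F_i}$ is, by construction, the pullback of that of $\overline{\cL}_{\tilde F_i}$, the curvature currents satisfy $c_1(\overline{\cL}_{\hat F_i})_v=\hat\pi_i^*\,c_1(\overline{\cL}_{\tilde F_i})_v$, and therefore
\[
c_1(\overline{\cL}_{\hat F_i})^{\,n-1}_v=\hat\pi_i^*\bigl(c_1(\overline{\cL}_{\tilde F_i})^{\,n-1}_v\bigr)=(n-1)!\cdot\hat\pi_i^*\,d\tilde\mu_i .
\]
In particular the equidistribution measure is the unique probability measure proportional to $\hat\pi_i^*\tilde\mu_i$; since $\hat\pi_i^*\tilde\mu_i$ has total mass $\deg(\hat\pi_i)$, this probability measure is exactly $\hat\pi_i^*\tilde\mu_i/\deg(\hat\pi_i)=\hat\mu_i$. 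Thus, for every $i$, the measures $\frac{1}{|\Gal(\Kbar/K)\cdot t_j|}\sum_y\delta_y$ converge weakly to $\hat\mu_i$ as in \eqref{limit statement 1}, and uniqueness of the weak limit forces $\hat\mu_1=\cdots=\hat\mu_n$.

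The delicate point, rather than any hard estimate, is the final identification: one must be sure that the archimedean curvature of the pullback line bundle is the pullback of the curvature current (legitimate here because the metric itself is a pullback, cf.\ \eqref{metric of F}) and that the normalizing degree $\deg_{\cL_{\hat F_i}}(H)$ matches the total mass of $\hat\pi_i^*\tilde\mu_i$, so that the combinatorial factors $(n-1)!$ cancel cleanly. The non-archimedean places carry no information for the conclusion, since $\hat\mu_i$ is by definition the complex (archimedean) measure on $H$; one simply ignores the equidistribution statements at those places. Everything else is straightforward book-keeping once Theorem~\ref{Yuan equidistribution} is available.
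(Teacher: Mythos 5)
Your proposal is correct and follows essentially the same route as the paper: apply Yuan's equidistribution theorem (Theorem~\ref{Yuan equidistribution}) to the single generic small sequence $\{t_j\}$ once for each pullback metrized bundle $\overline{\cL}_{\hat F_i}$, using \eqref{equation height cL i} and \eqref{height of H} for smallness, identify the archimedean limit measure as $\hat\mu_i$, and conclude by uniqueness of the weak limit. The extra book-keeping you spell out (ampleness of $\hat\pi_i^*\cL_{\tilde F_i}$ via finiteness of $\hat\pi_i$, the cancellation of the $(n-1)!$ against $\deg_{\cL_{\hat F_i}}(H)$) is exactly what the paper leaves implicit.
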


\begin{proof}
This is an immediate consequence of Theorem~\ref{Yuan equidistribution} applied to the sequence of points $t_j=(x_{1,j}, \cdots, x_{n,j})\in H(\Kbar)$ with respect to the adelic metrized line bundles $\overline \cL_{\hat F_i}$ for $1\leq i\leq n$. Indeed, when $v$ is archimedean, using \eqref{height of H} and the assumption on the points $t_j\in H$ we get that the Galois orbits of $t_j$ in $H$ equidistribute with respect to the probability measures $\hat \mu_i$ on $H(\C)$ for each $i\in\{1,\dots, n\}$. Hence $\hat \mu_1=\hat \mu_2=\cdots =\hat \mu_n$. 
\end{proof}

%%%%%%%%%%%%%%%%%%%%%%%%%%%%%%%%%%%%%%%%%%%%%%%%%%%%%%%%%%%%%%%%%%%%%%%%%%%%%%%

\subsection{Preperiodic points on hypersurfaces}
\label{subsection preperiodic iff}

%%%%%%%%%%%%%%%%%%%%%%%%%%%%%%%%%%%%%%%%%%%%%%%%%%%%%%%%%%%%%%%%%%%%%%%%%%%%%%%

In this section we prove the following important result; we thank Xinyi Yuan and Shouwu Zhang for several very helpful conversations regarding its proof.
\begin{proposition}
\label{iff proposition}
Let $n\ge 2$ be an integer, let $f_1,\dots,f_n\in\C(x)$ be rational functions of degrees $d_i\ge 2$ and let $H\subset (\bP^1)^n$ be an irreducible hypersurface which projects dominantly onto any subset of $(n-1)$ coordinate axes. Assume:
\begin{enumerate}
\item[(1)] either that $H$ contains a Zariski dense set of preperiodic points under the action of $(x_1,\dots, x_n)\mapsto (f_1(x_1),\dots, f_n(x_n))$ and that $d_1=\cdots =d_n$;  
\item[(2)] or that $f_1,\dots,f_n\in\Qbar(x)$, that $H$ is defined over $\Qbar$, and that there exists a Zariski dense sequence of points $(x_{1,j},\dots, x_{n,j})\in H(\Qbar)$ such that $\lim_{j\to\infty}\sum_{i=1}^n\hhat_{f_i}(x_{i,j})=0$, where $\hhat_{f_i}$ is the canonical height with respect to the rational function $f_i$. 
\end{enumerate}
Then there exists $i\in\{1,\dots, n\}$ such that for any $(a_1,\dots, a_n)\in H(\C)$ for which $a_j$ is preperiodic under the action of $f_j$ for each $j\in\{1,\dots, n\}\setminus\{i\}$, then we have that also $a_i$ is preperiodic under the action of $f_i$.
\end{proposition}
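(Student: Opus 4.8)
The plan is to work in the setting of part~(2) (the case~(1) will follow by a specialization argument, as indicated in the introduction, reducing to a situation where everything is defined over $\Qbar$), so assume $f_1,\dots,f_n\in\Qbar(x)$ and $H$ is defined over a number field $K$. By Theorem~\ref{equal measure thm} we already know that the $n$ measures $\hat\mu_1,\dots,\hat\mu_n$ on $H$ coincide; moreover by \eqref{height of H} each $\hhat_{\overline\cL_{\hat F_i}}(H)=0$. The strategy is to play two distinct line bundles on $H$ against each other and invoke an arithmetic Hodge-index-type rigidity: if for \emph{every} $i$ the conclusion failed, then for each $i$ there would be an irreducible subvariety $Z_i\subsetneq H$, mapping dominantly onto the $(n-1)$ coordinates other than the $i$-th, such that on $Z_i$ the height $\hhat_{\overline\cL_{\hat F_i}}$ is strictly positive (generically), while on all the $Z_j$ with $j\neq i$ it is zero. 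Playing the pairing of $\overline\cL_{\hat F_i}$ and $\overline\cL_{\hat F_j}$ on $H$ — whose archimedean curvatures are both (a multiple of) the common measure $\hat\mu$ — should force the two metrized line bundles to differ by a "constant" (a vertical/torsion piece) after pulling back; this is exactly where the Arithmetic Hodge Index Theorem of Yuan--Zhang \cite{Yuan-Zhang-published} enters, together with the classical Hodge Index Theorem for arithmetic surfaces (Faltings \cite{Faltings-Hodge}, Hriljac \cite{Hriljac}, Moriwaki \cite{Moriwaki}) applied fibrewise over suitable one-parameter families.

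Concretely, the key steps in order are: (i) fix two indices, say $i=1$ and $i=2$, and consider the morphism $\hat\pi_{1}\times\hat\pi_{2}$-type data, i.e. project $H$ via $(\hat\pi_1,\hat\pi_2)$; since $H$ projects dominantly onto both the first and second $(n-1)$-subsets, and since $H$ has dimension $n-1$, there is a Zariski-dense open set of the $(n-2)$ "shared" coordinates over which $H$ is a curve in the remaining $\bP^1\times\bP^1$. (ii) On such a generic fibre, which is a curve $C\subset\bP^1\times\bP^1$, apply the two-variable result: $\overline\cL_{F_1}$ restricted (through the first coordinate) and $\overline\cL_{F_2}$ restricted (through the second) both have height zero on $C$, and the equality $\hat\mu_1=\hat\mu_2$ restricted to the fibre says their archimedean curvatures agree. (iii) By the (arithmetic) Hodge Index theorem, $\overline\cL_{\hat F_1}$ and $\overline\cL_{\hat F_2}$ differ by a numerically/arithmetically trivial metrized line bundle on $H$; so for \emph{every} $t=(a_1,\dots,a_n)\in H(\Qbar)$ we get $\hhat_{\overline\cL_{\hat F_1}}(t)=\hhat_{\overline\cL_{\hat F_2}}(t)$, and more generally all $\hhat_{\overline\cL_{\hat F_i}}(t)$ are equal. (iv) Now use \eqref{equation height cL i}: $\hhat_{\overline\cL_{\hat F_i}}(t)=\sum_{j\neq i}\hhat_{f_j}(a_j)$. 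Equality of these $n$ quantities for all $i$, combined with the fact that each $\hhat_{f_j}\ge 0$, forces $\hhat_{f_j}(a_j)$ to be independent of $j$ — call it $h(t)$ — so that every $\hhat_{\overline\cL_{\hat F_i}}(t)=(n-1)h(t)$. Then if $a_j$ is preperiodic (i.e. $\hhat_{f_j}(a_j)=0$) for all $j\neq i$, we get $(n-1)h(t)=\hhat_{\overline\cL_{\hat F_i}}(t)=\sum_{j\neq i}\hhat_{f_j}(a_j)=0$, hence $h(t)=0$, hence $\hhat_{f_i}(a_i)=h(t)=0$, i.e. $a_i$ is preperiodic — and this works for \emph{every} $i$, which is even stronger than the stated conclusion.

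The subtle points, and the main obstacle, are in step~(iii): one must be careful about what "differ by an arithmetically trivial bundle" means and why equality of canonical local heights follows for all algebraic points, not merely for those in the dense small sequence. The clean tool is the Arithmetic Hodge Index Theorem of Yuan--Zhang, which says that if $\overline\cL$ is an integrable metrized line bundle on an $(n-1)$-dimensional arithmetic variety with $\cL$ numerically trivial on the generic fibre and $\overline\cL\cdot\overline\cL\cdot(\overline\cM)^{n-2}=0$ for an ample $\overline\cM$, then $\overline\cL$ is "constant", i.e. a pullback from $\Spec\cO_K$ up to torsion; applying this to $\overline\cL:=\overline\cL_{\hat F_1}\otimes\overline\cL_{\hat F_2}^{-1}$ (whose archimedean curvature vanishes identically since $\hat\mu_1=\hat\mu_2$, and whose self-intersection numbers vanish because all the relevant heights of $H$ and of the generic fibres are zero by \eqref{height of H} and the two-variable input) yields $\hhat_{\overline\cL_{\hat F_1}}=\hhat_{\overline\cL_{\hat F_2}}$ on $H(\Qbar)$ up to an additive constant; that constant is zero because both heights vanish on the small generic sequence. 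The remaining care is bookkeeping: ensuring the chosen ample auxiliary bundle, the semipositivity/integrability hypotheses, and the passage between $v$-adic curvatures at all places (archimedean and non-archimedean alike) are all legitimate — the non-archimedean curvatures of $\overline\cL_{\hat F_i}$ are the Chambert-Loir measures and their equality on $H$ is again an equidistribution consequence, exactly as in the proof of Theorem~\ref{equal measure thm}. Finally, descending from "defined over a number field" to "defined over $\C$" in case~(1) is handled by the Yuan--Zhang specialization results \cite{Yuan-Zhang-1, Yuan-Zhang-2}, as flagged in Subsection~\ref{subsection proof strategy}.
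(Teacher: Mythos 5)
Your step (iii) is where the argument breaks, and it cannot be repaired as stated. The Yuan--Zhang arithmetic Hodge index theorem requires the underlying line bundle on the generic fibre to be numerically trivial, but the generic fibre of $\cLbar_{\hat F_1}\otimes\cLbar_{\hat F_2}^{-1}$ is $\hat\cL_1-\hat\cL_2=\pi_2^*\cO_{\bP^1}(1)-\pi_1^*\cO_{\bP^1}(1)$ restricted to $H$, which is in general \emph{not} numerically trivial (its intersection numbers against the other coordinate hyperplane classes are the multidegrees of $H$, which need not be equal). Likewise, the equality $\hat\mu_1=\hat\mu_2$ is an equality of top-degree $(n-1,n-1)$ measures, i.e.\ of normalized $(n-1)$-st wedge powers of the curvatures; it does not give equality of the $(1,1)$ curvature forms themselves, so the curvature of your difference bundle does not vanish. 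Most tellingly, the conclusion you extract in (iv) --- that all $\hhat_{\cLbar_{\hat F_i}}(t)$ coincide, so that the statement holds for \emph{every} $i$ --- is false: already for $n=2$, take $f_1=f_2=f$ of degree $d\ge 2$ and $H$ the graph of $f$ (which projects dominantly onto each factor and contains a Zariski dense set of preperiodic points); at $t=(a,f(a))$ one has $\hhat_{\cLbar_{\hat F_1}}(t)=\hhat_f(f(a))=d\,\hhat_f(a)$ while $\hhat_{\cLbar_{\hat F_2}}(t)=\hhat_f(a)$, and these differ whenever $\hhat_f(a)>0$. So no Hodge-index statement can force the two metrized bundles to differ by a ``constant''.

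The paper's proof gets around exactly this obstruction: instead of comparing two of the bundles, it first uses the \emph{classical} Hodge index theorem to produce a nontrivial real linear combination $\cL_0=\sum_i c_i\hat\cL_i$ that \emph{is} numerically trivial on $H$ (choosing the $c_i$ so that $\cL_0\cdot\cL_1^{n-2}=0$ and $\cL_0^2\cdot\cL_1^{n-3}=0$ for the ample class $\cL_1=\sum_i\hat\cL_i$), then uses the smallness hypothesis and Zhang's successive-minima inequalities to show the relevant arithmetic intersection numbers vanish, in particular $\cLbar_0^2\cdot\cLbar_1^{n-2}=0$, and only then applies the arithmetic Hodge index theorem to $\cLbar_0=\sum_i c_i\cLbar_{\hat F_i}$ to conclude $\sum_i c_i\hhat_{\cLbar_{\hat F_i}}(x)=0$ for all $x\in H(\Qbar)$. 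Picking $i_0$ with $\sum_{i\ne i_0}c_i\ne 0$ then yields the conclusion for that single index --- which is all the proposition claims, and, by the example above, all that is true. Your fibering steps (i)--(ii) also do not supply the needed input, since generic fibre curves need not carry a dense set of small points; and the reduction of case (1) to case (2) is considerably more delicate than a citation (the paper runs an induction on transcendence degree combined with Call--Silverman's variation of canonical height and Baker/Benedetto's theorem for non-isotrivial maps over function fields), though that part of your sketch is at least pointed in the right direction.
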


We prove first that hypothesis~(2) in Proposition~\ref{iff proposition} yields the desired conclusion, and then we prove that part~(1) may be reduced to part~(2) in Proposition~\ref{iff proposition} through a specialization result of Yuan-Zhang \cite{Yuan-Zhang-1, Yuan-Zhang-2}. 

\begin{proof}[Proof of Proposition~\ref{iff proposition} assuming hypothesis~(2) holds.]
Since the case $n=2$ was proven in \cite{Mimar-thesis} (see also \cite{Mimar}), from now on, we assume $n>2$. We assume each $f_i\in\Qbar(x)$ and also that $H$ is defined over $\Qbar$. 

We use the notation as in Subsection~\ref{subsection adelic metrics hypersurface}; so, we consider the adelic metrics $\cLbar_{\hat F_i}$ (for $i=1,\dots, n$) on $H$, defined as in \eqref{cL hat F i}. For the sake of simplifying our notation, we will denote from now on the tensor product of two line bundles $\cM_1$ and $\cM_2$ as $\cM_1+\cM_2$. We denote by $\overline{{\rm Pic}}(H)$ the group of (adelic) metrized line bundles on $H$.   

\begin{lemma}
\label{lemma c i}
There exist real numbers $c_i$ (for $i=1,\dots, n$) not all equal to $0$ such that the metrized line bundle
\begin{equation}
\label{c i 0}
\cLbar_0:=c_1\cdot \cLbar_{\hat F_1} + \cdots + c_n \cLbar_{\hat F_n}\in \overline{{\rm Pic}}(H)\otimes \mathbb{R}
\end{equation}  
has the property that $\cLbar_0\cdot x=\hhat_{\cLbar_0}(x)=0$ for each $x\in H(\Qbar)$.  
\end{lemma}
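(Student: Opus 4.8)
\textbf{Proof proposal for Lemma~\ref{lemma c i}.}

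The plan is to realize the desired linear combination as a nonzero element in the kernel of a positive semidefinite quadratic form on a finite-dimensional real vector space, forcing the combination to have vanishing height pairing with every point of $H(\Qbar)$. First I would consider the finite-dimensional subspace $W\subset \overline{{\rm Pic}}(H)\otimes\mathbb{R}$ spanned by $\cLbar_{\hat F_1},\dots,\cLbar_{\hat F_n}$. On $W$ we have the arithmetic intersection pairing: for $\cLbar,\cMbar\in W$ form the degree-$n$ arithmetic intersection number $(\cLbar\cdot\cMbar\cdot \cLbar_{\hat F_1}^{\,n-2})$ (or, more symmetrically, a suitable height pairing attached to the $(n-1)$-dimensional variety $H$). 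The key analytic input is Theorem~\ref{equal measure thm}: under hypothesis~(2) we have $\hat\mu_1=\cdots=\hat\mu_n$, i.e. all the curvature currents $c_1(\cLbar_{\hat F_i})_v$ induce, up to normalization by $\deg(\hat\pi_i)$, the \emph{same} probability measure on $H^{an}_{\C_v}$ at every place $v$, archimedean and non-archimedean alike (the non-archimedean equality following from the same equidistribution statement in Theorem~\ref{Yuan equidistribution}). Together with $\hhat_{\cLbar_{\hat F_i}}(H)=0$ from \eqref{height of H}, this says that each $\cLbar_{\hat F_i}$ is, after scaling, a ``numerically trivial up to the common measure'' object: the differences $\deg(\hat\pi_1)^{-1}\cLbar_{\hat F_1}-\deg(\hat\pi_i)^{-1}\cLbar_{\hat F_i}$ have all curvatures zero and total height zero.

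Next I would invoke the Arithmetic Hodge Index Theorem of Yuan--Zhang \cite{Yuan-Zhang-published}: an adelic metrized line bundle $\cMbar$ on $H$ whose curvatures vanish at all places and which has $\hhat_{\cMbar}(H)=0$ must be (numerically) trivial, so its height pairing with any point of $H(\Qbar)$ vanishes — more precisely, the self-intersection form is negative semidefinite on the space of ``numerically trivial on the generic fibre'' classes, with kernel exactly the vertical/torsion classes, and an element with zero self-intersection pairs to zero with every point. Applying this to $\cMbar_i := \deg(\hat\pi_i)^{-1}\cLbar_{\hat F_1}-\deg(\hat\pi_1)^{-1}\cdot\text{(appropriately normalized)}\,\cLbar_{\hat F_i}$ — here I would be careful to normalize so that both summands present the same measure at each place, using Theorem~\ref{equal measure thm} — I get that $\cMbar_i\cdot x=0$ for all $x\in H(\Qbar)$. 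Then $\cLbar_0$ is taken to be any one such nonzero $\cMbar_i$, or, if the $\cMbar_i$ happen all to vanish identically in $\overline{{\rm Pic}}(H)\otimes\mathbb{R}$ (which would already be enough, giving $\cLbar_{\hat F_i}$ proportional to $\cLbar_{\hat F_1}$), one simply takes $c_i$ to record that proportionality; in either case we produce real numbers $c_1,\dots,c_n$, not all zero, with $\hhat_{\cLbar_0}(x)=0$ for every $x\in H(\Qbar)$. The classical Hodge Index Theorem on arithmetic surfaces (Faltings \cite{Faltings-Hodge}, Hriljac \cite{Hriljac}, and Moriwaki \cite{Moriwaki} in higher dimension) enters when one needs to descend the pairing to a concrete surface by slicing $H$ with generic hyperplanes, which is the standard device for reducing the index inequality on $H$ to the two-dimensional case.

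I expect the main obstacle to be \emph{bookkeeping the normalizations and checking the non-archimedean equality of the currents}: Theorem~\ref{equal measure thm} as stated records equality of the probability measures $\hat\mu_i$ at archimedean $v$, but the Hodge Index argument needs the metrized line bundles $\cLbar_{\hat F_i}$ (suitably scaled by $\deg(\hat\pi_i)$ and by the degrees $d_k$) to agree as adelic objects up to a class of curvature zero at \emph{all} places; establishing this requires re-running the equidistribution input at non-archimedean places and tracking how $\hat\pi_i^*$ interacts with the Chambert-Loir measures. A secondary subtlety is ensuring the resulting $\cLbar_0$ is genuinely nonzero in $\overline{{\rm Pic}}(H)\otimes\mathbb{R}$ — but this is automatic unless all $\cLbar_{\hat F_i}$ are already pairwise proportional, in which case the conclusion of the lemma holds trivially with the proportionality constants as the $c_i$. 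Once the normalizations are pinned down, the rest is a direct application of the two Hodge Index Theorems and the positivity/kernel description they provide.
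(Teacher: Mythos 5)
Your proposal takes a genuinely different route from the paper, and its central step fails. You want to deduce from Theorem~\ref{equal measure thm} that normalized two-term differences such as $\overline{\mathcal{M}}_i:=\deg(\hat\pi_i)^{-1}\cLbar_{\hat F_1}-\deg(\hat\pi_1)^{-1}\cLbar_{\hat F_i}$ have vanishing curvature at every place, and then to feed these into the arithmetic Hodge index theorem. But the equality $\hat\mu_1=\cdots=\hat\mu_n$ is an equality of the \emph{top} wedge powers $c_1(\cLbar_{\hat F_i})_v^{n-1}$ (suitably normalized) as $(n-1,n-1)$-currents on $H$; it says nothing about the $(1,1)$-curvature forms $c_1(\cLbar_{\hat F_i})_v$ themselves, which are the restrictions to $H$ of $\sum_{j\ne i}\pi_j^*(\text{invariant current of }f_j)$ and are genuinely different for different $i$. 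Worse, if $c_1(\overline{\mathcal{M}}_i)_v$ vanished at an archimedean place, the underlying $\R$-line bundle $\deg(\hat\pi_i)^{-1}\hat\cL_1-\deg(\hat\pi_1)^{-1}\hat\cL_i$ would have to be numerically trivial on $H$, which is false in general: for a hypersurface $H\subset(\bP^1)^3$ of multidegree $(e_1,e_2,e_3)$ with all $e_i>0$, writing $h_j:=\pi_j^*\cO_{\P^1}(1)|_H$ one has $h_j^2=0$ and $h_1h_2=e_3$, $h_1h_3=e_2$, $h_2h_3=e_1$, and a direct computation shows that no nonzero real combination of $\hat\cL_1=h_2+h_3$ and $\hat\cL_2=h_1+h_3$ alone is numerically trivial. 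So no two-term difference can serve as $\cLbar_0$, no choice of normalization repairs this, and your fallback case (all $\cLbar_{\hat F_i}$ pairwise proportional) does not occur either. A second gap: the arithmetic Hodge index theorem of \cite{Yuan-Zhang-published} needs, besides numerical triviality of the generic fibre, the vanishing of the arithmetic number $\cLbar_0^2\cdot\cLbar_1^{n-2}$; knowing only $\hhat_{\cLbar_{\hat F_i}}(H)=0$ for each $i$ does not give this, and your proposal never produces it.

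The paper's proof differs on exactly these two points and, notably, does not use Theorem~\ref{equal measure thm} at all. First, the coefficients $c_i$ are not explicit: one applies the classical (geometric) Hodge Index Theorem on $H$ to find a nonzero combination $\cL_0=\sum_i c_i\hat\cL_i$ satisfying both $\cL_0\cdot\cL_1^{n-2}=0$ and $\cL_0^2\cdot\cL_1^{n-3}=0$ (with $\cL_1=\sum_i\hat\cL_i$ ample), and concludes that $\cL_0$ is numerically trivial. Second, the Zariski dense sequence of small points is exploited through Zhang's successive-minima inequality applied to the whole family $\cLbar_1+m_1\cLbar_{\hat F_{i_1}}+m_2\cLbar_{\hat F_{i_2}}$ for varying $m_1,m_2\in\N$, which forces all mixed arithmetic intersection numbers $\cLbar_1^{j_0}\cdot\cLbar_{\hat F_{i_1}}^{j_1}\cdot\cLbar_{\hat F_{i_2}}^{j_2}$ to vanish and hence $\cLbar_0^2\cdot\cLbar_1^{n-2}=0$. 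Only then is the arithmetic Hodge Index Theorem applied, to the $\cLbar_1'$-bounded class $\cLbar_0$ with numerically trivial generic fibre (after twisting $\cLbar_1$ by $\iota^*(\cC)$ to gain arithmetic positivity), yielding $\hhat_{\cLbar_0}(x)=0$ for every $x\in H(\Qbar)$. To salvage your approach you would have to replace the explicit two-term differences by such an abstractly chosen combination and supply the intersection-number vanishing from the small-points hypothesis; the equality of the measures $\hat\mu_i$ cannot substitute for either step.
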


\begin{proof}[Proof of Lemma~\ref{lemma c i}.]
We thank Shouwu Zhang for suggesting us the proof of this Lemma, which follows the idea used in the proof of \cite[Theorem~4.13]{Yuan-Zhang-published}. 

We let $\hat \cL_i\in {\rm Pic}(H)$ be the line bundle supporting $\cLbar_{\hat F_i}$, i.e., 
$$\hat \cL_i:=\pi_1^*\cO_{\P^1}(1)\otimes \cdots \otimes \pi_{i-1}^*\cO_{\P^1}(1)\otimes \pi_{i+1}^*\cO_{\P^1}(1)\otimes \cdots \otimes \pi_n^*\cO_{\P^1}(1),$$
where $\pi_j$ is the induced projection map of $H$ onto the $j$-th coordinate axis of $(\bP^1)^n$ (for each $j=1,\dots, n$). 

\begin{claim}
\label{claim c i}
There exist real constants $c_1,\dots, c_n$ (not all equal to $0$) such that the line bundle $\cL_0:=\sum_{i=1}^n c_i \hat \cL_i\in {\rm Pic}(H)\otimes \mathbb{R}$ is numerically trivial.
\end{claim}

\begin{proof}[Proof of Claim~\ref{claim c i}.]
The main ingredient in our proof is the classical Hodge Index Theorem (see \cite[Theorem~5.20]{Yuan-Zhang-published}). We let 
\begin{equation}
\label{c L 1 definition}
\cL_1:=\sum_{i=1}^n \hat \cL_i \in {\rm Pic}(H);
\end{equation}
then $\cL_1$ is ample (since it is the pullback of an ample line bundle on $(\bP^1)^n$ under the natural inclusion map $H\hookrightarrow (\bP^1)^n$).  We find the real numbers $c_i$ so that $\cL_0:=\sum_{i=1}^n c_i\hat \cL_i$ satisfies the following two conditions:
\begin{enumerate}
\item[(A)] $\cL_0\cdot \cL_1^{n-2} =0$ and
\item[(B)] $\cL_0^2\cdot \cL_1^{n-3} = 0$.
\end{enumerate}
Condition (A) above yields a linear relation between the unknowns $c_i$. On the other hand, condition~(B) yields a quadratic form in the variables $c_i$. This quadratic form is not positive-definite since (from the Hodge Index Theorem) we know that generically, for any line bundle $\cM$ satisfying $\cM\cdot \cL_1^{n-2}=0$, we have that $\cM^2\cdot \cL_1^{n-3}\le 0$. Also, this quadratic form is not negative definite since $\cL_1^2\cdot \cL_1^{n-3}=\cL_1^{n-1} > 0$ (because $\cL_1$ is ample). Therefore, there exist real numbers $c_i$, not all equal to $0$ such that $\cL_0$ satisfies both conditions~(A)~and~(B) above. Then \cite[Theorem~5.20]{Yuan-Zhang-published} yields that $\cL_0$ is numerically trivial, as claimed.  
\end{proof}

Let now $c_1,\dots, c_n\in\mathbb{R}$ satisfy the conclusion of Claim~\ref{claim c i} and define 
$$\cLbar_0:=\sum_{i=1}^n c_i\cLbar_{\hat F_i}\in \overline{\rm Pic}(H)\otimes \mathbb{R}.$$

We consider next the adelic metrized line bundle $\cLbar_1:=\sum_{i=1}^n\overline \cL_{\hat F_i}$; note that the generic fiber of $\cLbar_1$ is the ample line bundle $\cL_1$ from \eqref{c L 1 definition}. Using our hypothesis~(2) from Proposition~\ref{iff proposition}, i.e., the existence of a Zariski dense set of points on $H$ of height tending to $0$, we obtain that each of the successive minima $e_j(\cLbar_1)=0$ for $j=0,\dots, n-1$. Note that for each $j=0,\dots, n$, we have
$$e_j(\cLbar_1):=\sup_{\substack{Y\subset H\\ {\rm codim}_H(Y)=j}}\inf_{x\in (H\setminus Y)(\Qbar)} \hhat_{\cLbar_1}(x)$$
and so, indeed hypothesis~(2) of Proposition~\ref{iff proposition} yields that $e_j(\cLbar_1)=0$. In particular, $e_{n}(\cLbar_1)=0$ and thus $\cLbar_1^n=0$. The exact same argument applied for each $i_1,i_2=1,\dots, n$ and for each $m_1,m_2\in\N$ to the metrized line bundle $\cLbar_{i_1,i_2,m_1,m_2}:=\cLbar_1+m_1\cLbar_{\hat F_{i_1}}+m_2\cLbar_{\hat F_{i_2}}$ yields again 
\begin{equation}
\label{cL m i}
\left(\cLbar_1+m_1\cLbar_{\hat F_{i_1}}+m_2\cLbar_{\hat F_{i_2}}\right)^n=0.
\end{equation}
Keeping $i_1$ and $i_2$ fixed and letting $m_1$ and $m_2$ vary in $\N$, we see that equation \eqref{cL m i} yields that $\cLbar_1^{j_0}\cdot \cLbar_{\hat F_{i_1}}^{j_1}\cdot \cLbar_{\hat F_{i_2}}^{j_2}=0$ for each non-negative integers $j_0,j_1,j_2$ such that $j_0+j_1+j_2=n$. Hence   
\begin{equation}
\label{arithmetic Hodge 1}
\cLbar_0^2\cdot \cLbar_1^{n-2}=0;
\end{equation}
moreover, because the numbers $c_i$ satisfy the construction from Claim~\ref{claim c i} (see condition~(A) in the proof of the aforementioned Claim), we also have that
\begin{equation}
\label{Hodge 1}
\cL_0\cdot \cL_1^{n-2}=0. 
\end{equation}
Furthermore, since each $\cLbar_{\hat F_i}$ is semipositive, we obtain that (with the terminology from \cite{Yuan-Zhang-published}) $\cLbar_0$ is $\cLbar_1$-bounded, i.e., there exists $m\in\N$ (any integer larger than $\max_i |c_i|$ would work) such that both $m\cdot \cLbar_1 -\cLbar_0$ and $m\cdot \cLbar_1 + \cLbar_0$ are semipositive.

Since $\cLbar_1$ may not necessarily be arithmetically positive, we alter $\cLbar_1$ by adding to it an arbitrarily positive metrized line bundle $\iota^*(\cC)$ where $\cC$ is a positive metrized line bundle on ${\rm Spec}(\Qbar)$ and $\iota:H\lra {\rm Spec}(\Qbar)$ is the structure morphism (for a similar application, see the proof of \cite[Theorem~4.13]{Yuan-Zhang-published}). Then $\cLbar_0$ would still be $\cLbar_1'$-bounded with respect to this new metrized line bundle $\cLbar_1':=\cLbar_1+\iota^*(\cC)$. Because the generic fiber of $\cLbar_0$ is numerically trivial (according to our choice of the numbers $c_i$ satisfying the conclusion of Claim~\ref{claim c i}), then \eqref{arithmetic Hodge 1} and \eqref{Hodge 1} yield
\begin{equation}
\label{arithmetic Hodge 2}
\cL_0\cdot \left(\cL_1'\right)^{n-1}=0\text{ and }\cLbar_0^2\cdot \left(\cLbar_1'\right)^{n-2}=0.
\end{equation}
Thus the hypotheses of \cite[Theorem~3.2]{Yuan-Zhang-published} are verified and so, we obtain that the metrized line bundle $\cLbar_0$ is itself numerically trivial, i.e., $\hhat_{\cLbar_0}(x)=0$ for each $x\in H(\Qbar)$. This concludes the proof of Lemma~\ref{lemma c i}.  
\end{proof}

So, by Lemma~\ref{lemma c i}, there exist suitable constants $c_i\in\mathbb{R}$ (for $i=1,\dots, n$), not all equal to $0$ such that the metrized line bundle 
$\cLbar_0:=c_1\cdot \cLbar_{\hat F_1} + \cdots + c_n \cLbar_{\hat F_n}\in \overline{{\rm Pic}}(H)\otimes \mathbb{R}$ 
is numerically trivial on $H$ and therefore, for each $\alpha\in H(\Qbar)$, we have that $\cLbar_0\cdot \alpha =0$, i.e.,
\begin{equation}
\label{heights 0}
\sum_{i=1}^n c_i \cdot \hhat_{\cLbar_{\hat F_i}}(\alpha) = 0.
\end{equation}
Since not all $c_i$ are equal to $0$, then there exists some $i_0\in \{1,\dots, n\}$ with the property that 
\begin{equation}
\label{sum c i nonzero}
c_1+ \cdots + c_{i_0-1} + c_{i_0+1} + \cdots + c_n \ne 0.
\end{equation}
Now, for any $\alpha:=(a_1,\dots, a_n)\in H(\Qbar)$ and for any $i=1,\dots, n$, we have that 
\begin{equation}
\label{height cL alpha}
\hhat_{\cLbar_{\hat F_i}}(\alpha) = \hhat_{f_1}(a_1) + \cdots + \hhat_{f_{i-1}}(a_{i-1}) + \hhat_{f_{i+1}}(a_{i+1}) + \cdots + \hhat_{f_n}(a_n),
\end{equation}
as shown in \eqref{equation height cL i}. Now, if   
$$
\hhat_{f_1}(a_1)=\cdots =\hhat_{f_{i_0-1}}\left(a_{i_0-1}\right) = \hhat_{f_{i_0+1}}\left(a_{i_0+1}\right) = \cdots = \hhat_{f_n}(a_n)=0,
$$
then \eqref{heights 0}, \eqref{sum c i nonzero} and \eqref{height cL alpha}  yield that also $\hhat_{f_{i_0}}\left(a_{i_0}\right)=0$, as claimed in the conclusion of Proposition~\ref{iff proposition}. This concludes our proof of Proposition~\ref{iff proposition} assuming each rational function $f_i$ along with the hypersurface $H$ are defined over $\Qbar$.
\end{proof}

\begin{proof}[Proof of Proposition~\ref{iff proposition} assuming hypothesis~(1) holds.]
We let $K\subset \C$ be a finitely generated extension of $\Qbar$ such that each $f_i\in K(x)$ and also $H$ is defined over $K$. We argue by induction on $r:={\rm trdeg}_{\Qbar}K$; the case $r=0$ is already proved using Proposition~\ref{iff proposition} with hypothesis~(2). Hence, we assume the conclusion of Proposition~\ref{iff proposition} holds whenever $r<s$ (for some $s\in\N$) and we prove that it also holds when $r=s$. We know there exists an infinite sequence $S$ of points $\alpha_j\in H(\C)$ such that $\alpha_j$ has its $i$-th coordinate preperiodic under the action of $f_i$ (for each $i=1,\dots, n$). Also, we let 
$$d:=\deg(f_1)=\deg(f_2)=\cdots = \deg(f_n).$$ 
Then we let $K_0$ be a subfield $\Qbar\subset K_0\subset K$ such that ${\rm trdeg}_{K_0}K=1$ and we let $\cC$ be a curve defined over $K_0$ whose function field is $K$ (at the expense of replacing both $K_0$ and $K$ by finite extensions, we may assume $\cC$ is a projective, smooth, geometrically irreducible curve). We fix some algebraic closures $\overline{K_0}\subset \Kbar$ of our fields. 

There exists a Zariski dense, open subset $C\subset \cC$ such that we may view each $f_i$ as a base change of an endomorphism $f_{i,C}$  of $\bP^1_C$; similarly, $H$ is the base change of a hypersurface $H_C\subset (\bP^1_C)^n$, while $S$ is the base change of a subset $S_C\subset H_C$. For each geometric point $t\in C\left(\overline{K_0}\right)$, the objects $H_C$, $f_{i,C}$ and $S_C$ have reductions $H_t$, $f_{i,t}$ and respectively $S_t$, such that $S_t\subset H_C$ consists of points with their $i$-th  coordinate preperiodic under the action of $f_{i,C}$, for each $i=1,\dots, n$. 

\begin{claim}
\label{specialization claim}
There exists a Zariski dense, open subset $C_0\subset C\subset \cC$ such that for each $t\in C_0\left(\overline{K_0}\right)$, the set $S_t$ is Zariski dense in $H_t$. 
\end{claim}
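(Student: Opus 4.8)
The plan is to invoke the specialization theorems of Yuan--Zhang \cite{Yuan-Zhang-1, Yuan-Zhang-2} for a Zariski dense set of preperiodic points of a polarizable dynamical system over a base curve. First I would set things up so that these results apply: consider the fibered dynamical system $\left((\bP^1_C)^n, \hat f_C\right)$ over the curve $C$, where $\hat f_C=(f_{1,C},\dots,f_{n,C})$; since $d_1=\cdots=d_n=d$, this is a polarizable family (the line bundle $\cL_1$ from \eqref{c L 1 definition}, pulled back from $(\bP^1)^n$, is relatively ample and $\hat f_C^*\cL_1$ is fiberwise linearly equivalent to $\cL_1^{\otimes d}$). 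The set $S_C\subset H_C$ consists of points which are preperiodic under $\hat f_C$ on the generic fiber, and by hypothesis $S$ (hence $S_C$) is Zariski dense in $H$. The conclusion I want is that for $t$ in a Zariski dense open $C_0\subset C$, the specializations of these points remain Zariski dense in the fiber $H_t$; this is precisely the content of the specialization statement for preperiodic points in \cite{Yuan-Zhang-1, Yuan-Zhang-2}, applied to the subvariety $H_C$ and the dense subset of preperiodic points $S_C$.

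The key steps, in order, are: (1) verify the polarizability of the family and that $H_C$ with its dense set of preperiodic points $S_C$ satisfies the hypotheses of the Yuan--Zhang specialization result; (2) extract from that result a Zariski dense open $C_0\subseteq C$ over which the fiberwise specializations of a large (infinite, in fact generically dense) subset of $S_C$ continue to be preperiodic \emph{and} to be Zariski dense in $H_t$; (3) record that each such specialized point still has its $i$-th coordinate preperiodic under $f_{i,t}$ for every $i$, which is automatic since preperiodicity of a point in $(\bP^1)^n$ under a split endomorphism is equivalent to preperiodicity of each coordinate, and specialization of preperiodic points preserves preperiodicity. After this Claim is in hand, the remainder of the argument (not part of this proof) will be: pick $t\in C_0\left(\overline{K_0}\right)$, observe that $\mathrm{trdeg}_{\Qbar}\overline{K_0}<s$ since $\mathrm{trdeg}_{K_0}K=1$, so the inductive hypothesis of Proposition~\ref{iff proposition} applies to $H_t$ and the maps $f_{i,t}$, giving an index $i$ that works for $H_t$; and then a further specialization/lifting argument (comparing the behaviour at the generic point with the behaviour at the dense set of special fibers) transfers this conclusion back to $H$ itself.

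The main obstacle I expect is step (2): the Yuan--Zhang specialization theorems guarantee that preperiodicity specializes well and that ``small'' points specialize to ``small'' points away from a thin or non-dense exceptional set, but one must be careful that the \emph{Zariski density} of the specialized set in the fiber $H_t$ is preserved, not just the preperiodicity of individual points. Concretely, the danger is that the dense set $S_C$ could specialize, for a ``bad'' $t$, into a proper closed subset of $H_t$; ruling this out for all $t$ in a dense open requires either a direct constructibility/spreading-out argument (the locus of $t$ for which $S_t$ fails to be dense is constructible, and one shows it is not dense by exhibiting one good $t$, e.g. via a Hilbert-irreducibility or Bertini-type argument, or by noting that the generic fiber being the ``most generic'' specialization forces density on a dense open) or invoking the precise density statement in \cite{Yuan-Zhang-2}. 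This is the delicate point and is where the quoted Yuan--Zhang machinery does the real work; everything else is bookkeeping with the fibered model $C$ and the constructibility of the relevant loci.
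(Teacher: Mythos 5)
There is a genuine gap, and it is precisely at the point you yourself flag as ``the delicate point.'' The Yuan--Zhang specialization theorem (\cite[Theorem~4.7]{Yuan-Zhang-2}, \cite[Lemma~3.2.3]{Yuan-Zhang-1}) concerns a Zariski dense set of preperiodic points of a \emph{polarizable dynamical system on the whole space} $X$; it does not apply ``to the subvariety $H_C$ and the dense subset $S_C$,'' because $H$ carries no induced endomorphism (it is not assumed invariant under $\hat f$ --- indeed its non-preperiodicity is what the whole argument is driving at), and $S$ is not Zariski dense in the ambient $(\bP^1)^n$, so neither possible direct invocation of the theorem is licensed. The paper bridges this with a projection trick that is absent from your proposal: let $\Psi=(f_1,\dots,f_{n-1})$ act on $(\bP^1)^{n-1}$ (polarizable since the degrees are equal), and let $\tilde S$ be the image of $S$ under the projection onto the first $n-1$ coordinates. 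Because $H$ projects dominantly onto these coordinates and $S$ is dense in $H$, the set $\tilde S$ is Zariski dense in the \emph{full} space $(\bP^1)^{n-1}$, so Yuan--Zhang applies there and gives that $\tilde S_t$ is dense in $(\bP^1)^{n-1}$ for all $t$ in a dense open $C_0$. Then a dimension count finishes: $S_t\subset H_t$ surjects onto $\tilde S_t$, so its Zariski closure has dimension $n-1$, hence $S_t$ is dense in the hypersurface $H_t$.

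Your proposed fallbacks do not close this gap. The locus of $t$ for which $S_t$ fails to be dense in $H_t$ is a condition on an \emph{infinite} collection of multisections, so it is not a constructible locus that could be controlled by exhibiting a single good $t$; Zariski density of the union of these multisections in the total space $H_C$ does not imply fiberwise density over a dense open, and Hilbert-irreducibility or Bertini arguments give no purchase here since the failure is not cut out by finitely many equations. The arithmetic input (preperiodicity, i.e.\ smallness of height, plus polarizability, via the Faber--Gubler equidistribution of subvarieties used by Yuan--Zhang) is what rules out fiberwise degeneration --- but only for sets dense in the polarized dynamical space itself, which is exactly why the projection to $(\bP^1)^{n-1}$ is the essential missing step. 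Your surrounding bookkeeping (spreading out over $C$, preservation of coordinatewise preperiodicity under specialization, and the subsequent induction on transcendence degree) matches the paper, but without the projection-and-dimension-count argument the claim as stated is not proved.
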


\begin{proof}[Proof of Claim~\ref{specialization claim}.]
We let $\Psi:=(f_1,\dots, f_{n-1})$ be the coordinatewise action of these rational functions on the first $n-1$ coordinates of $(\bP^1)^n$; since $d_1=\cdots = d_{n-1}=d>1$, we know that $\Psi$ is a polarizable endomorphism of $(\bP^1)^{n-1}$. We let $\tilde{S}$ be the projection of the set $S$ on the first $n-1$ coordinate axes of $(\bP^1)^n$; because $S\subset H$ is dense and $H$ projects dominantly onto the first $n-1$ coordinate axes, we conclude that $\tilde{S}\subset (\bP^1)^{n-1}$ is also dense. Note that each point of $\tilde{S}$ is a preperiodic point for $\Psi$. As before, we let $\tilde{S}_t$ be the specialization of the set $\tilde{S}_C$ at some point $t\in C_0\left(\overline{K_0}\right)$.

As proven in   \cite[Theorem~4.7]{Yuan-Zhang-2} (see also \cite[Lemma~3.2.3]{Yuan-Zhang-1}), the set $\tilde{S}_t\subset (\bP^1)^{n-1}$ is still Zariski dense for all the $\overline{K_0}$-points $t$ of a dense open subset $C_0\subseteq C$. Here it is the only point in our argument where we use that $d_1=\cdots = d_n$ because Yuan-Zhang \cite{Yuan-Zhang-2, Yuan-Zhang-1} show that specializing a Zariski dense set of preperiodic points for a \emph{polarizable} endomorphism yields also a Zariski dense set of preperiodic points for all specializations in a dense, open subset of the base; in their proof, they employ a result of Faber \cite{Faber} and of Gubler \cite{Gubler} regarding the equidistribution of subvarieties of a given polarizable dynamical system $(X,\Phi)$ with respect to the invariant measure of $\Phi$. (As an aside, we note that the results of \cite{Yuan-Zhang-1} were recently published in \cite{Yuan-Zhang-published}, while \cite{Yuan-Zhang-2} has been  
updated to \cite{Yuan-Zhang-arxiv} using slightly different arguments.) Finally, since $\tilde{S}_t\subset (\bP^1)^{n-1}$ is Zariski dense, then the Zariski closure of $S_t$ must have dimension $n-1$ because $S_t$ projects to $\tilde{S}_t$ on the first $n-1$ coordinate axes of $(\bP^1)^n$. Hence $S_t\subset H_t$ is Zariski dense, which concludes the proof of Claim~\ref{specialization claim}. 
\end{proof}

Let $C_0$ be the Zariski dense, open subset of $C$ satisfying the conclusion of Claim~\ref{specialization claim}. At the expense of perhaps shrinking $C_0$ to a smaller, dense, open subset, we may assume that 
\begin{equation}
\label{same degree}
\deg\left(f_{i,t}\right)=d>1\text{ for all $i=1,\dots, n$ and each $t\in C_0\left(\overline{K_0}\right)$.}
\end{equation}
For each $t\in C_0(\Qbar)$, our inductive hypothesis  (which can be applied since each $f_i$ and also $H$ are defined over $\overline{K_0}$ and ${\rm trdeg}_{\Qbar}\overline{K_0}<s$) yields the existence of some index $i_t\in\{1,\dots, n\}$ which has the property that for each $\alpha\in H_t(\Qbar)$, if we know that the $j$-th coordinate of $\alpha$ is preperiodic under the action of $f_{j,t}$ for each $j\in\{1,\dots, n\}\setminus\{i_t\}$, then also the $i_t$-th coordinate of $\alpha$ is preperiodic under the action of $f_{i_t,t}$. 

Let $h_\cC(\cdot )$ be a height function for the points on $\cC\left(\overline{K_0}\right)$ corresponding to a divisor of degree $1$ on $\cC$, constructed with respect to the Weil height on $\overline{K_0}$. Note that if ${\rm trdeg}_{\Q}K_0\ge 1$, then we construct the Weil height on the function field $K_0/\Qbar$ as in \cite{Bombieri}. At the expense of replacing $C_0$ by an infinite  subset $U_0$ for which  
\begin{equation}
\label{height C t}
\sup_{t\in U_0} h_\cC(t)=+\infty, 
\end{equation}
we may even assume that for each $t\in U_0$, there is the same index $i_0:=i_t\in\{1,\dots, n\}$ satisfying the above  property. We show next that this index $i_0$ would satisfy the conclusion of Proposition~\ref{iff proposition} for $H$.

Indeed, let $\alpha=(a_1,\dots, a_n)\in H(\Kbar)$ with the property that for each $j\in\{1,\dots, n\}\setminus \{i_0\}$, we have that $a_j$ is preperiodic under the action of $f_j$. Then for each $t\in U_0$ we have that each $a_{j,t}$ (for $j\in\{1,\dots, n\}\setminus\{i_0\}$) is preperiodic for $f_{j,t}$ and so, also  $a_{i_0,t}$ is preperiodic under the action of $f_{i_0,t}$. Therefore, the canonical height 
\begin{equation}
\label{canonical height 111}
\hhat_{f_{i_0,t}}\left(a_{i_0,t}\right)=0, 
\end{equation}
where $\hhat_{f_{i_0,t}}(\cdot )$ is the canonical height corresponding to the rational function $f_{i_0,t}$ (which has degree larger than $1$; see \eqref{same degree}), constructed using the Weil height on $\overline{K_0}$. Using \cite[Theorem~4.1]{C-S}, we have that 
\begin{equation}
\label{variation of canonical height}
\lim_{h_\cC(t)\to\infty} \frac{\hhat_{f_{i_0,t}}\left(a_{i_0,t}\right)}{h_\cC(t)} = \hhat_{f_{i_0}}\left(a_{i_0}\right), 
\end{equation}
where $\hhat_{f_{i_0}}(\cdot )$ is the canonical height of $f_{i_0}$ constructed with respect to the function field $K/K_0$. Equations \eqref{height C t},  \eqref{canonical height 111} and \eqref{variation of canonical height} yield that 
\begin{equation}
\label{height is 0}
\hhat_{f_{i_0}}\left(a_{i_0}\right) = 0.
\end{equation}
If $f_{i_0}\in K(x)$ is not isotrivial over $K_0$, then \cite{Baker} (see also \cite{Rob} for the case of polynomials) yields that \eqref{height is 0} is equivalent with saying that $a_{i_0}$ is preperiodic under the action of $f_{i_0}$, as desired. Now, if $f_{i_0}$ is isotrivial over $K_0$, then there exists a linear transformation 
$$\nu:\bP^1\lra \bP^1\text{ (defined over $\Kbar$)}$$ 
such that $\nu^{-1}\circ f_{i_0}\circ \nu\in \overline{K_0}(x)$. If $\nu^{-1}(a_{i_0})\in \overline{K_0}$, then since we know there exists even a single specialization $t$ such that $a_{i_0,t}$ is preperiodic for $f_t$, we get that also $a_{i_0}$ is preperiodic for $f_{i_0}$. On the other hand, if $\nu^{-1}(a_{i_0})\notin \overline{K_0}$, then $\nu^{-1}(a_{i_0})$ cannot be preperiodic for $\nu^{-1}\circ f_{i_0}\circ \nu\in \overline{K_0}(x)$ and so, $a_{i_0}$  is not preperiodic for $f_{i_0}$, contradiction. This concludes the proof of Proposition~\ref{iff proposition} under hypothesis~(1). 
\end{proof}

%######################################################################
%######################################################################
  
  \section{Hypersurfaces having a Zariski dense set of preperiodic points}
\label{section from identical to preperiodic}

%%%%%%%%%%%%%%%%%%%%%%%%%%%%%%%%%%%%%%%%%%%%%%%%%%%%%%%%%%%%%%%%%%%%%%%%%%%%%
%%%%%%%%%%%%%%%%%%%%%%%%%%%%%%%%%%%%%%%%%%%%%%%%%%%%%%%%%%%%%%%%%%%%%%%%%%%%%%%

In this section, we prove Theorem \ref{measure to periodicity}, which (essentially) says  that there is no hypersurface $H$ containing a Zariski dense set of preperiodic points under the coordinatewise action of some rational functions $f_i$, along with some additional technical conditions.  To make things simple, we work on a hypersurface $H\subset (\P^1)^{n+1}$ of dimension $n$ and use the following notation 
  $$\tilde x=(x_1, \cdots, x_n), ~ \underline x=(x_1, \cdots, x_{n-1})$$
and hence $\tilde a=(a_1, \cdots, a_n), ~ \underline a=(a_1, \cdots, a_{n-1})$,  etc. We denote by $D(a,r)\subset \C$ the usual disk of radius $r$ centered at $a$; also, we use the following notation for polydiscs:
 $$D_{n-1}(\underline a, r)=D(a_1, r)\times \cdots D(a_{n-1}, r)\text{ and } D_{n}(\tilde a, r)=D_{n-1}(\underline a, r)\times D(a_n, r).$$

For the benefit of our readers, we split our proof of Theorem~\ref{measure to periodicity} in several subsections, each one presenting a different step in our argument. 

%%%%%%%%%%%%%%%%%%%%%%%%%%%%%%%%%%%%%%%%%%%%%%%%%%%%%%%%%%%%%%%%%%%%%%%%%%%%%%

\subsection{Statement of our theorem}

%%%%%%%%%%%%%%%%%%%%%%%%%%%%%%%%%%%%%%%%%%%%%%%%%%%%%%%%%%%%%%%%%%%%%%%%%%%%%%%

\begin{theorem}\label{measure to periodicity}
Let $n\ge 2$, let  $f_i$ be rational functions defined over $\C$ of degrees $d_i>1$ (for $1\leq i\leq n+1$), and let $H\subset (\mathbb{P}^1)^{n+1}$ be an irreducible hypersurface defined over $\C$ which projects dominantly onto each subset of $n$ coordinate axes. For each $i=1,\dots, n+1$, let $\tilde f_i$ be the coordinatewise action on $(\bP^1)^n$ given by  
$$(x_1,\dots, x_{i-1},x_{i+1},\cdots ,x_{n+1})\mapsto \left(f_1(x_1),\cdots , f_{i-1}(x_{i-1}),f_{i+1}(x_{i+1}),\cdots , f_{n+1}(x_{n+1})\right).$$
Let $\hat{\mu}_i$ be the measures on $H$ induced from the dynamical systems $\left((\bP^1)^n, \tilde f_i\right)$ and assume that $\hat{\mu}_i=\hat{\mu}_{n+1}$ for $1\leq i\leq n$. Also assume that there is a point $(\tilde a, b_0)\in H\cap \C^{n+1}$ with $\tilde a=(a_1, \cdots, a_n)$, such that: 
\begin{itemize}
\item $a_i$ is a repelling fixed  point of $f_i$ for $1\leq i\leq n$; and 
\item $b_1:=f_{n+1}(b_0)$ is a fixed point of $f_{n+1}$; and
\item there is a holomorphic germ $h(\tilde x)$ at $\tilde a$ with $h(\tilde a)=b_0$,  and $(\tilde x, h(\tilde x))\in H(\C)$ for all $\tilde x\in \C^n$ in a small (complex analytic) neighbourhood of $\tilde a$. Moreover, for each $i=1, \cdots, n$ we have that 
   $$\beta_i:=\frac{\partial h}{\partial x_i}(\tilde a)\neq 0.$$
\end{itemize}
Then the $f_i$'s must be exceptional, and moreover, they are 
\begin{itemize}
\item either all of them conjugate to monomials and $\pm$Chebyshev polynomials,
\item or all of them Latt\'es maps.
\end{itemize}
\end{theorem}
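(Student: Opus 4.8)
The plan is to exploit the equality of measures $\hat\mu_i=\hat\mu_{n+1}$ in a small polydisc around the special point $(\tilde a,b_0)$ to produce a rich family of local symmetries of the Julia set $J_{f_{n+1}}$, and then to invoke the rigidity of such symmetries (post-criticially finite with parabolic orbifold $\iff$ exceptional, as recalled in Subsection~\ref{sym}). First I would work in the coordinates supplied by the hypothesis: since each $a_i$ ($1\le i\le n$) is a repelling fixed point of $f_i$, I linearize $f_i$ near $a_i$ via a local conjugacy $\varphi_i$ with $\varphi_i(a_i)=0$ and $\varphi_i\circ f_i\circ\varphi_i^{-1}(z)=\lambda_i z$ with $|\lambda_i|>1$; likewise linearize $f_{n+1}$ near its fixed point $b_1=f_{n+1}(b_0)$ with multiplier $\lambda_{n+1}$. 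In these coordinates, the measure $\hat\mu_{n+1}$ on $H$ near $(\tilde a,b_0)$ is, via the graph parametrization $\tilde x\mapsto(\tilde x,h(\tilde x))$, the pullback $h^*\mu_{f_{n+1}}$ times $\prod_{i\le n}\mu_{f_i}$-type currents coming from the other slices; the key point is that because $H$ is a graph over $\tilde x$, each $\hat\mu_i$ ($i\le n$) is, up to the $dd^c u_j$ factors in the directions $j\ne i$, the measure $\mu_{f_i}$ in the $x_i$-direction, while $\hat\mu_{n+1}$ involves $\mu_{f_{n+1}}$ transported through $h$. Equating $\hat\mu_i=\hat\mu_{n+1}$ and peeling off the common $(n-1)$ transverse currents (which is legitimate on the open set where all the $u_j$ have the expected smoothness and the projections are local isomorphisms) yields, for each $i$, an identity of $1$-dimensional measures that forces $\mu_{f_i}$ in the $x_i$-variable to agree, under the local map $x_i\mapsto h(\tilde a_1,\dots,x_i,\dots,\tilde a_n)$ and the linearizing coordinates, with $\mu_{f_{n+1}}$.

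The next step is to extract symmetries. Iterating $f_i$ and $f_{n+1}$ and using the linearizations, the relation above says that for suitable integers $m_i,m$ the map $z\mapsto h(\ldots,\varphi_i^{-1}(\lambda_i^{-m_i}\cdot),\ldots)$ followed by $\varphi_{n+1}$ composed with $\lambda_{n+1}^{m}$ sends $J_{f_i}$ (a circle/segment/sphere in the exceptional case, or a fractal otherwise) locally onto $J_{f_{n+1}}$ and transports $\mu_{f_i}$ to a scalar multiple of $\mu_{f_{n+1}}$. By the definition of symmetry in Subsection~\ref{sym}, this produces a nontrivial family $\mathcal S$ of symmetries of $J_{f_{n+1}}$: varying the iterates $m_i$ (which is possible because $\beta_i\ne0$, so the germ $h$ genuinely depends on $x_i$ and the derivatives $\beta_i\lambda_i^{-m_i}$ give infinitely many distinct nonzero scalings) gives a normal family with no subsequence converging to a constant. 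I would then apply the rigidity theorem for symmetries of Julia sets (the local version used in \cite{GNY} and recalled here): a rational function of degree $>1$ admitting a nontrivial family of local symmetries of its Julia set must be exceptional, i.e. post-critically finite with parabolic orbifold. This forces $f_{n+1}$ to be exceptional, and by the symmetric role of the indices (the hypothesis $a_i$ repelling fixed for $i\le n$ together with $\hat\mu_i=\hat\mu_{n+1}$), the same argument run with the roles of $f_{n+1}$ and any $f_i$ interchanged forces every $f_i$ to be exceptional.

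Finally I would rule out the mixed cases. Once all $f_i$ are exceptional, each is conjugate to $z^{\pm d_i}$, to $\pm T_{d_i}$, or to a Lattès map, and each Julia set is a circle, a segment, or the whole sphere. The transport relations $\hat\mu_i=\hat\mu_{n+1}$, now read through the smooth models, say the maximal-entropy measures match up under the germ $h$ and the linearizing charts; a circle (resp. segment, resp. sphere) cannot be locally biholomorphically identified, in a measure-preserving way up to scalar, with a segment of a different type. More precisely: the monomial/Chebyshev Julia sets are real-analytic curves while a Lattès Julia set is the whole sphere, so a local holomorphic symmetry (an open map) cannot carry one onto the other; this segregates the indices into "monomial-or-Chebyshev" versus "Lattès", and the equality of measures forces all indices to lie in a single class. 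Combining, either all $f_i$ are conjugate to monomials and $\pm$Chebyshev polynomials, or all are Lattès, which is the desired dichotomy.

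\textbf{Main obstacle.} The delicate point is the "peeling off" step: justifying that from the equality of the $(n,n)$-currents $\hat\mu_i=\hat\mu_{n+1}$ on $H$ one may cancel the $(n-1)$ common transverse factors to obtain a genuine identity of one-dimensional measures, and then promoting that to a bona fide symmetry (with the injectivity-on-$A$ and scalar-multiple conditions verified). This requires choosing the polydisc small enough that all the local potentials $u_j$ are continuous, the projections $\hat\pi_i|_H$ are biholomorphisms onto their images, and $h$ is an immersion in each coordinate direction (guaranteed by $\beta_i\ne0$); one must also track carefully that the scaling constants produced by iteration are nonconstant, which is exactly where the hypotheses that $a_i$ is \emph{repelling} (so $|\lambda_i|>1$) and $\beta_i\ne0$ are used. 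I expect the current-theoretic bookkeeping, rather than the complex-dynamical rigidity input, to be the technical heart of the argument.
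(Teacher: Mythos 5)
Your outline gets the general flavour right (equality of measures, peel off transverse currents via a slicing argument, exploit rigidity of symmetries of Julia sets), but the central step is not justified and, as stated, would not work. You claim that the local maps obtained by composing $h$ with backward iterates of the $f_i$ and forward iterates of $f_{n+1}$ automatically form an \emph{infinite nontrivial} family of symmetries, so that Levin's theorem \cite{Levin} forces $f_{n+1}$ to be exceptional. There are two problems. First, if you vary the exponents freely so that the derivatives $\beta_i\lambda_i^{-m_i}$ run through ``infinitely many distinct nonzero scalings,'' the resulting maps degenerate to constants (the scalings tend to $0$ or $\infty$), so the family is trivial in the sense of Subsection~\ref{sym} and Levin gives nothing; to keep normality and non-constant limits you must balance expansion against contraction as in \eqref{lambdas}, and then the derivatives tend to $1$ and the maps converge to the identity. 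Second, and more seriously, nothing rules out that this balanced family consists of only \emph{finitely many} distinct germs --- indeed eventually equal to the identity --- in which case Levin's dichotomy yields no contradiction with non-exceptionality. That collapse happens exactly when $H$ satisfies a functional equation of the type \eqref{identical hypers}, i.e.\ when an image of $H$ is invariant under some $(f_1^{m_1},\dots,f_{n+1}^{m_{n+1}})$, a situation perfectly compatible with non-exceptional maps (think of invariant curves such as the diagonal when two of the coordinates carry the same map). This is why the paper runs the rigidity in the opposite direction: Lemma~\ref{identical symmetry} (symmetries near the identity are the identity) together with Lemma~\ref{identical maps} upgrades the approximate identity to an exact functional equation, Lemma~\ref{first invariant} converts it into invariance of $(f_1^{m_1},\dots,f_{n+1}^{m_{n+1}})(H)$, and the arithmetic condition \eqref{arithmetic condition} on the exponent tuples is engineered so that one can take $m_1=1$ and $m_i=0$ otherwise, forcing $H=\bP^1\times H_0$ and contradicting the hypothesis that $H$ projects dominantly onto every subset of $n$ coordinate axes. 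Your proposal never uses that dominance hypothesis at this stage, which is a telltale sign of the gap: without it the intended conclusion is false.

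A secondary issue: you linearize $f_{n+1}$ at the fixed point $b_1$, but the hypotheses only give that $b_1=f_{n+1}(b_0)$ is fixed and lies in $J_{f_{n+1}}$, so a priori its multiplier merely satisfies $|\rho|\ge 1$ (it could be parabolic, with no linearization), and $b_0$ itself need not be fixed and may even be critical for $f_{n+1}$. The paper first passes to the branch $g$ of $f_{n+1}^{-1}\circ f_{n+1}\circ f_{n+1}$ fixing $b_0$ and proves Lemma~\ref{greater than one} (via the measure-growth comparison of $\Phi_{00}$ and $\Phi_{11}$, using an iterate with $d_n<d_{n+1}^m$) to guarantee $|\lambda|>1$ before any linearization; some such argument is needed in your write-up as well. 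Your final paragraph separating the monomial/Chebyshev case from the Latt\`es case is essentially the content of Corollary~\ref{non-exceptional}, and is fine in spirit, though it ultimately rests on Zdunik's and Hamilton's theorems rather than only on the smooth-versus-fractal dichotomy you invoke.
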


\begin{proof}
As we previously stated, we will prove Theorem~\ref{measure to periodicity} over the next several subsections of Section~\ref{section from identical to preperiodic}.

%%%%%%%%%%%%%%%%%%%%%%%%%%%%%%%%%%%%%%%%%%%%%%%%%%%%%%%%%%%%%%%%%%%%%%%%%%%%%

\subsection{Julia sets and invariant measures}

%%%%%%%%%%%%%%%%%%%%%%%%%%%%%%%%%%%%%%%%%%%%%%%%%%%%%%%%%%%%%%%%%%%%%%%%%%%%%

From the assumptions of Theorem \ref{measure to periodicity}, the multiplier 
   $$\lambda_i:=f_i'(a_i)$$
has absolute value $|\lambda_i|>1$ for $1\leq i\leq n$.  So, each $a_i$ is in the support of the Julia set $J_{f_i}$ of $f_i$, for $i=1,\dots, n$. Thus $(\tilde a,b_0)$ is in the support of $\hat \mu_{n+1}$ and because $\hat \mu_n=\hat \mu_{n+1}$, we get that $(\tilde a, b_0)$ must be in the support of $\hat \mu_n$. Therefore,   $b_0$ must be in the support $J_{f_{n+1}}$ of $\mu_{f_{n+1}}$. Hence $b_1=f_{n+1}(b_0)\in J_{f_{n+1}}$ and so, it has multiplier 
  $$\rho:=f'_{n+1}(b_1)$$
of absolute value $|\rho|\geq 1$. Let $j_0$ be the local degree of the map $f_{n+1}(x)$ at $x=b_0$, and let $g(x)$ be a holomorphic germ  on $\bP^1$ at $b_0$ which is one of the following branches
\begin{equation}\label{def of g}
g(x):=f_{n+1}^{-1}\circ f_{n+1}\circ f_{n+1}(x)
\end{equation}
satisfying $g(b_0)=b_0$. 
Although there are $j_0$ different choices for $g(x)$, in the rest of this section we fix our choice $g(x)$ for such a branch. An easy computation shows that 
\begin{equation}\label{lambda}
\lambda:=g'(b_0)=\sqrt[j_0]{\rho}
\end{equation}
is a $j_0$-th root of the multiplier $\rho$ of $f_{n+1}$ at $b_1$. Since $\mu_{f_{n+1}}$ admits no atoms on $\bP^1$ and $\mu_{f_{n+1}}(f_{n+1}(A))=d_{n+1}\cdot\mu_{f_{n+1}}(A)$ for any Borel set $A$ with $f_{n+1}$ being injective on $A$, the definition of $g(x)$ yields that  
   $$\mu_{f_{n+1}}(g(A))=d_{n+1}\cdot \mu_{f_{n+1}}(A)$$
for any Borel set $A$ in a small neighborhood of $b_0$. 

\begin{lemma}\label{greater than one}
The multiplier $\lambda$ of $g(x)$ at $b_0$ has absolute value $|\lambda|>1$. 
\end{lemma}

\begin{proof}[Proof of Lemma \ref{greater than one}]We first assume that $|\lambda|\leq 1$ and then prove the lemma by deriving a contradiction. Using \eqref{lambda} and the fact that $|\rho|\geq 1$, we get that $|\lambda|=1$. 

Pick a positive integer $m$ with $d_n<d^m_{n+1}$. Let 
\begin{equation}\label{phi00}
\Phi_{00}(\tilde x):=(\underline x, h(\underline x, f_n(x_n)))\textup{ and } \Phi_{11}(\tilde x):=(\underline x, g^m\circ h(\tilde x))
\end{equation}
be functions locally defined in a neighborhood of $\tilde a\in \C^n$, mapping that small neighborhood of $\tilde a$ into a neighborhood of $(\underline a, b_0)\in \C^n$. Since $\hat \mu_n=\hat\mu_{n+1}$, there exists some $c>0$ with
\begin{equation}\label{lambda contradiction}\Phi_{00}^*(\tilde \mu_{n})=c\cdot d_{n}\cdot \tilde \mu_{n+1}\textup{ and } \Phi_{11}^*(\tilde \mu_{n})=c\cdot d^m_{n+1}\cdot \tilde \mu_{n+1}.
\end{equation}
The measures $\tilde \mu_{n}$ and $\tilde \mu_{n+1}$  (defined in \eqref{measure tilde i}) appearing in \eqref{lambda contradiction} are restricted on some small  neighborhood of $\tilde a$ (respectively of $(\underline a, b_0)$). Let $A$ be the polydisc given by $A:=D_{n-1}(\underline a, r_1)\times D(a_n, r_2)$ for very small $r_2$ and much smaller $r_1$.  We claim that $\Phi_{11}(A)\subset \Phi_{00}(A)$. To see this, let $r_2$ be very small and we see that $f_n(D(a_n, r_2))\sim D(a_n,|\lambda_n|r_2)$. As $|\lambda|=|g'(b_0)|=1<|\lambda_n|$ and $\beta_n=\frac{\partial h}{\partial x_n}(\tilde a)\neq 0$, using \eqref{phi00} we can pick some very small $r_2$ and a much smaller $r_1$ such that 
\begin{equation}
\label{equation A}
\Phi_{11}(A)\subset D_{n-1}(\underline a, r_1)\times D(b_0, r_2\cdot |\beta_n|\cdot|\lambda_n|^{1/2})\subset \Phi_{00}(A).
\end{equation}
However, combining \eqref{lambda contradiction} with $d_n<d^m_{n+1}$ gives
  $$\tilde \mu_n(\Phi_{11}(A))>\tilde\mu_n( \Phi_{00}(A)),$$
which is a contradiction. This concludes the proof of Lemma~\ref{greater than one}.
\end{proof}

%%%%%%%%%%%%%%%%%%%%%%%%%%%%%%%%%%%%%%%%%%%%%%%%%%%%%%%%%%%%%%%%%%%%%%%%%%%%%%%

\subsection{A special sequence of tuples of positive integers}

%%%%%%%%%%%%%%%%%%%%%%%%%%%%%%%%%%%%%%%%%%%%%%%%%%%%%%%%%%%%%%%%%%%%%%%%%%%%%%

Now since $|\lambda|>1$ and $|\lambda_i|>1$ for $1\leq i\leq n$, we can pick a sequence of tuples  of positive integers $(j_\ell, j_{1,\ell}, \cdots, j_{n,\ell})$ such that $j_\ell\to \infty$ as $\ell\to \infty$ and moreover,  
\begin{equation}\label{lambdas}
\lim_{\ell\to \infty}\inf\frac{|\lambda_1^{j_{1,\ell}}|}{|\lambda^{j_\ell}|}, \cdots,  \lim_{\ell\to \infty}\inf\frac{|\lambda_{n-1}^{j_{1,\ell}}|}{|\lambda^{j_\ell}|}\geq \lim_{\ell\to \infty}\frac{\lambda_n^{j_{n,\ell}}}{\lambda^{j_\ell}}=1.
\end{equation}

It will be useful later in our argument (see Lemma~\ref{first invariant}) that our sequence of tuples $(j_\ell, j_{1,\ell}, \cdots, j_{n,\ell})$ satisfies the following  arithmetic property in addition to \eqref{lambdas}. We want that for every $N\in\N$, there exist $\ell_2>\ell_1>N$ such that 
\begin{equation}
\label{arithmetic condition}
j_{\ell_2}=j_{\ell_1}\text{ and }j_{i,\ell_2}=j_{i,\ell_1}\text{ for $2\le i\le n$, while }j_{1,\ell_2}=j_{1,\ell_1}+1.
\end{equation}
In order to achieve \eqref{arithmetic condition}, we may replace the original  sequence of tuples $\left\{(j_\ell,j_{1,\ell},\dots, j_{n,\ell})\right\}_{\ell=1}^\infty$ 
by the larger sequence $\left\{(j'_\ell,j'_{1,\ell},\dots, j'_{n,\ell})\right\}_{\ell=1}^\infty$ 
for which
$$j'_{2\ell-1}=j'_{2\ell}=j_{\ell}\text{ and }j'_{i,2\ell-1}=j'_{i,2\ell}=j_{i,\ell}$$
for $i=2,\dots, n$, while
$$j'_{1,2\ell-1}=j_{1,\ell}\text{ and }j'_{1,2\ell}=j_{1,\ell}+1$$
and still the new sequence $\left\{(j'_\ell,j'_{1,\ell},\dots, j'_{n,\ell})\right\}_{\ell=1}^\infty$ satisfies \eqref{lambdas} and the fact that $j_{\ell}\to\infty$ as $\ell\to\infty$. For the sake of simplifying our notation, we will denote our new sequence of tuples also as $\left\{(j_\ell, j_{1,\ell},\dots, j_{n,\ell})\right\}_{\ell=1}^\infty$, but we note that this sequence of tuples satisfies \eqref{arithmetic condition}. 

%%%%%%%%%%%%%%%%%%%%%%%%%%%%%%%%%%%%%%%%%%%%%%%%%%%%%%%%%%%%%%%%%%%%%%%%%%%%%%%

\subsection{Local symmetries for the Julia sets}

%%%%%%%%%%%%%%%%%%%%%%%%%%%%%%%%%%%%%%%%%%%%%%%%%%%%%%%%%%%%%%%%%%%%%%%%%%%%%%%

From \cite{Milnor:book}, we know we can conjugate $f_i$ (for $1\leq i\leq n$) and $g$ to linear maps in small neighborhoods of the repelling fixed points.   More precisely, there exist holomorphic germs $\phi_i$ at $x=0$ satisfying 
$$\phi_i(0)=a_i\text{, }\phi_{n+1}(0)=b_0\text{, }\phi_i'(0)=\phi_{n+1}'(0)=1\text{ for $1\le i\le n$ and}$$  
$$\phi_i^{-1}\circ f_i\circ \phi_i(x)=\lambda_i \cdot x\text{ for $1\le i\le n$,}$$
$$\text{while }\phi_{n+1}^{-1}\circ g\circ \phi_{n+1}(x)=\lambda\cdot  x.$$
We notice that for $(x_1, \cdots, x_n)$ in a neighborhood of $\tilde a\in \C^n$, we have an equality of germs:  
$$
g^{j_\ell}\circ h\circ \left(f_1^{-j_{1,\ell}}(x_1), \cdots, f_n^{-j_{n,\ell}}(x_n)\right)$$
$$ = \phi_{n+1}\circ \left(\lambda^{j_\ell}\cdot h_{\phi}\left(\frac{\phi_1^{-1}(x_1)}{\lambda_1^{j_{1,\ell}}}, \cdots, \frac{\phi_n^{-1}(x_n)}{\lambda_n^{j_{n,\ell}}}\right)\right),$$
where $h_\phi:=\phi_{n+1}^{-1}\circ h\circ (\phi_1, \cdots, \phi_n)$ and $f_i^{-1}$ is the germ of a branch of the inverse of $f_i$ at $x_i=a_i$ with $f_i^{-1}(a_i)=a_i$. So, using also \eqref{lambdas}, then for very small $r_0>0$ and all $\tilde x$ in the ball $B(\tilde a, r_0)\subset \C^n$ of radius $r_0$, the map 
$$\tilde x\mapsto g^{j_\ell}\circ h\circ (f_1^{-j_{1,\ell}}, \cdots, f_n^{-j_{n,\ell}})(\tilde x)$$ 
is well defined and uniformly bounded on $B(\tilde a, r_0)$ for all $\ell\geq 1$. Next, we construct the function  
  $$\Psi(\tilde x):=(x_1, \cdots, x_{n-1}, h(\tilde x))$$
for $\tilde x=(x_1, \cdots, x_n)$, which is locally one-to-one at $\tilde x=\tilde a$ since $\beta_n=\frac{\partial h}{\partial x_n}(\tilde a)\neq 0$. Shrinking $r_0$ if necessary, we let
\begin{equation}
\begin{split}
\label{split equation}
\Psi_\ell(\tilde x)&:=\Psi^{-1}\circ\left(x_1, \cdots, x_{n-1}, g^{j_\ell}\circ h\circ (f_1^{-j_{1,\ell}}, \cdots, f_n^{-j_{n,\ell}})(\tilde x)\right)\\
&:=(x_1, \cdots, x_{n-1}, h_\ell(\tilde x))
\end{split}
\end{equation}
for all $\tilde x\in B(\tilde a, r_0)$ and all $\ell\geq 1$, where $h_\ell$ is some local analytic function on $B(\tilde a, r_0)$ satisfying \eqref{split equation}.
 
\begin{lemma}\label{normal family}
The family of functions $\{h_\ell(\tilde x)\}_{\ell\geq 1}$ restricted on $B(\tilde a, r_0)$ is a normal family. 
\end{lemma}

\begin{proof}[Proof of Lemma \ref{normal family}] 
Since $\tilde x\mapsto g^{j_\ell}\circ h\circ (f_1^{-j_{1,\ell}}, \cdots, f_n^{-j_{n,\ell}})(\tilde x)$ is uniformly bounded on $B(\tilde a, r_0)$ for all $\ell\geq 1$, then 
 that $h_\ell$ (defined as in \eqref{split equation}) is uniformly bounded on $B(\tilde a, r_0)$, i.e., there exist $R>0$ such that 
  $$h_\ell(B(\tilde a, r_0))\subset B(b_0, R)\subset \C$$
for all $\ell\geq 1$. Hence $h_\ell$ is a distance non-increasing map from $B(\tilde a, r)$ (with respect to the Bergman metric) to $B(b_0, R)$ (with respect to the hyperbolic metric). Thus $\{h_\ell(\tilde x)\}_{\ell\geq 1}$ is equicontinuous on $B(\tilde a, r_0)$, or equivalently, $\{h_\ell(\tilde x)\}_{\ell\geq 1}$ is a normal family. 
\end{proof}

From Lemma \ref{normal family}, we can pick a subsequence of $\{\Psi_\ell\}_{\ell\geq 1}$ which converges uniformly on $B(\tilde a, r_0)$. By passing to a subsequence, without loss of generality, we can assume that the sequence $\{\Psi_\ell\}_{\ell\geq 1}$ itself converges uniformly to 
  $$\Psi_0(\tilde x):=\lim_{\ell\to \infty} \Psi_\ell(\tilde x)$$
and satisfies \eqref{arithmetic condition} with $\Psi_0(\tilde a)=\tilde a$ and $\Psi_0(\tilde x)=:(x_1, \cdots, x_{n-1}, h_0(\tilde x))$. Since
  $$\frac{\partial h_0}{\partial x_n}(\tilde a)=\lim_{\ell\to \infty} \frac{\partial h_\ell}{\partial x_n}(\tilde a)=\lim_{\ell\to \infty} \frac{\lambda^{j_\ell}}{\lambda_n^{j_{n,\ell}}}=1\neq 0,$$
the map $\Psi_0$ is locally one-to-one at $\tilde x=\tilde a$. Shrinking $r_0$ if necessary, we can further assume that the sequence of maps 
  $$\Psi_0^{-1}\circ \Psi_\ell(\tilde x)=:(x_1, \cdots, x_{n-1}, \hbar_\ell(\tilde x))=:\Phi_\ell(\tilde x)$$
converges uniformly to the identity map on $B(\tilde a, r_0)$ as $\ell\to \infty$. The next goal is to show that $\Phi_\ell$ is the identity map for all large $\ell$; see Lemma \ref{identical maps}.

%%%%%%%%%%%%%%%%%%%%%%%%%%%%%%%%%%%%%%%%%%%%%%%%%%%%%%%%%%%%%%%%%%%%%%%%%%%%%%%

\subsection{Equal currents}

%%%%%%%%%%%%%%%%%%%%%%%%%%%%%%%%%%%%%%%%%%%%%%%%%%%%%%%%%%%%%%%%%%%%%%%%%%%%%%
  
\begin{proposition}\label{equal currents} Let $r_1$ and $r_2$ be positive real numbers and let $u_1, \cdots, u_n$ and $u$ be continuous subharmonic functions on $D(0, r_1)$, respectively on $D(0, r_2)$.  Let $\theta$ be a holomorphic map from $D_n(\tilde 0, r_1)$ to $D(0,r_2)$ and moreover, assume the following two  $(n,n)$-currents satisfy the relation: 
  $$dd^cu_1(x_1)\wedge\cdots \wedge dd^c u_n(x_n)=c_0\cdot dd^cu_1(x_1)\wedge\cdots \wedge dd^c u_{n-1}(x_{n-1})\wedge dd^c u\circ \theta(\tilde x)$$
on $D_n(\tilde 0, r_1)$ for some constant $c_0>0$. Then for any given point $\underline \alpha$ in the support of $dd^cu_1(x_1)\wedge\cdots \wedge dd^c u_{n-1}(x_{n-1})$, we have the following equality of $(1,1)$-currents on $D(0, r_1)$:
   $$dd^c u_n(x_n)=c_0\cdot dd^c u\circ \theta(\underline \alpha, x_n).$$ 
\end{proposition}

\begin{proof}[Proof of Proposition \ref{equal currents}] Let $\underline \alpha$ be a point in the support of $dd^cu_1(x_1)\wedge\cdots \wedge dd^c u_{n-1}(x_{n-1})$. 
It suffices to show that for any $C^\infty$ real function $\varphi$ with compact support on $D(0, r_1)$, one has 
    $$\int_{D(0, r_1)} \varphi(x_n) dd^c u_n(x_n)=c_0\int_{D(0, r_1)} \varphi (x_n)dd^c u\circ \theta(\underline \alpha, x_n).$$
 To see this, we let $\underline \mu$ be the measure on $D_{n-1}(\underline 0, r_1)$ with 
   $$d \underline \mu(\underline x):=c_0\cdot dd^cu_1(x_1)\wedge\cdots \wedge dd^c u_{n-1}(x_{n-1})$$
and let $\tilde \mu$ be the measure on $D_n(\tilde 0, r_1)$ with 
  $$d\tilde \mu(\tilde x):=d \underline \mu(\underline x) \wedge \frac{dx_n\wedge d\bar x_n}{-4\pi i}$$

For each small positive real number $r$, we let  $\eta_{r}(\underline x)$ be a $C^\infty$-function on $D_{n-1}(\underline 0, r_1)$ satisfying the properties:
\begin{itemize}
\item  $0\leq \eta_{r}\leq 1$;
\item $\eta_r$ is supported on $D_{n-1}(\underline \alpha, r)$; and 
\item $\eta_r=1$ on  $D_{n-1}(\underline \alpha, r/2)$. 
\end{itemize}
From the proportionality assumption of the two $(n,n)$-currents, we get
\begin{equation}
\begin{split}
\frac{1}{c_0}\left(\int \eta_{r} d\underline\mu\right) \int\varphi dd^c u_n&=\frac{1}{c_0}\int \eta_{r}(\underline x)\varphi(x_n)d\underline \mu (\underline x) \wedge dd^c u_n(x_n)\\
&=\int \eta_{r}(\underline x)\varphi(x_n)d\underline \mu(\underline x) \wedge dd^c u\circ \theta(\tilde x)\\
&=\int u\circ \theta(\tilde x)d\underline \mu \wedge dd^c (\eta_{r}\varphi)\\
&=\int \eta_{r}(\underline x) u\circ \theta(\tilde x) \triangle \varphi(x_n) d\tilde \mu(\tilde x)
\end{split}
\end{equation}
where $\triangle$ is the Laplacian and the right hand side is integrated over the domain $D_{n}(\tilde 0, r_1)$. Similarly we derive that 
\begin{equation*}
\left(\int \eta_{r} d\underline\mu\right) \int \varphi dd^c u\circ \theta(\underline \alpha, x_n)=\int \eta_{r}(\underline x) u\circ \theta(\underline \alpha, x_n) \triangle \varphi(x_n) d\tilde \mu(\tilde x).
\end{equation*}
Now let 
   $$\Theta_{r}(\tilde x):= \eta_{r}(\underline x)\cdot \left( u\circ \theta(\underline \alpha, x_n) - u\circ \theta(\tilde x)\right)\cdot  \triangle \varphi(x_n)$$
which is supported on $D_{n-1}(\underline \alpha, r)\times D(0, r_1)$. Hence as $u\circ \theta$ is continuous and $\varphi$ has compact support on $D(0, r_1)$, there exist constants $\epsilon_r\to 0$ as $r\to 0$ such that for any $\tilde x\in D_n(\tilde 0,r_1)$, we have 
   $$|\Theta_{r}(\tilde x)|\leq \eta_{r}(\underline x)\cdot \epsilon_r.$$
Consequently 
   $$\left|\frac{1}{c_0}\int\varphi dd^c u_n- \int \varphi dd^c u\circ \theta(\underline \alpha, x_n)\right|\leq \frac{ \int_{D_n(\tilde 0, r_1)} \eta_{r}(\underline x)\cdot \epsilon_r~ d\tilde \mu(\tilde x)}{\int_{D_{n-1}(\underline 0, r_1)}\eta_{r}(\underline x) d\underline\mu(\underline x)}= \epsilon_r\cdot c_1$$
with $c_1=\int_{D(0, r_1)} 1\cdot \frac{dx_n\wedge d\bar x_n}{-4\pi i}$. Now letting $r\to 0$, the conclusion in Proposition~\ref{equal currents} follows. 
\end{proof}

%%%%%%%%%%%%%%%%%%%%%%%%%%%%%%%%%%%%%%%%%%%%%%%%%%%%%%%%%%%%%%%%%%%%%%%%%%%%%%%

\subsection{The rational functions must be exceptional}

%%%%%%%%%%%%%%%%%%%%%%%%%%%%%%%%%%%%%%%%%%%%%%%%%%%%%%%%%%%%%%%%%%%%%%%%%%%%%%%

The next result yields half of the conclusion in Theorem~\ref{measure to periodicity} by showing that if $f_{n+1}$ is an exceptional rational function, then each $f_i$ is exceptional, and moreover, each $f_i$ is either Latt\'es or not, depending on whether $f_{n+1}$ is a Latt\'es map, or not.

\begin{corollary}\label{non-exceptional}
The following statements hold:
\begin{itemize}
\item if $f_{n+1}$ is conjugate to a monomial or a $\pm$Chebyshev polynomial, then each $f_i$ (for $i=1,\dots, n$) is conjugate to a monomial or a  $\pm$Chebyshev polynomial.  
\item if $f_{n+1}$ is a Latt\'es map, then each $f_i$ is a Latt\'es map.
\end{itemize}
\end{corollary}

\begin{proof}[Proof of Corollary \ref{non-exceptional}] 
So, we assume that $f_{n+1}$ is exceptional. 
Without loss of generality, we show that $f_n$ is exceptional as well and moreover, it is Latt\'es if and only if $f_{n+1}$ is a Latt\'es map. Since $f_i$ (and $f_{n+1}$) has continuous potential near $a_i$ (respectively near $b_0$) and moreover, $a_i\in J_{f_i}$ which is the support of $\mu_{f_i}$, then Proposition~\ref{equal currents} along with the hypotheses of Theorem~\ref{measure to periodicity} yield that the map $h(\underline a, \cdot)$ which sends a neighborhood of $a_n\in J_{f_n}$ to a neighborhood of $b_0\in J_{f_{n+1}}$ preserves the measures up to a scaling, i.e., for some $c>0$
 \begin{equation}\label{local no exception}
 h^*(\underline a, \cdot)\mu_{f_{n+1}}=c\cdot \mu_{f_{n}}.
\end{equation}
In \cite[Theorem~1]{Levin}, it was shown that there exists an infinite nontrivial family of symmetries on $J_f$ if and only if $f$ is post-critically finite with parabolic orbifold; hence \eqref{local no exception} (see also Subsection~\ref{sym}) yields that $f_n$ must be exceptional. 

By a theorem of  Zdunik \cite{Zdu},  a rational function $f$  is Latt\`es if and only if $J_f$ is $\P^1$ and $\mu_f$ is absolutely continuous with respect to Lebesgue measure on $\P^1$; therefore, \eqref{local no exception} yields that  $f_n$ is Latt\'es if $f_{n+1}$ is Latt\'es.

Assume that $f_{n+1}$ is conjugate either to a monomial or $\pm$Chebyshev polynomial. Then \eqref{local no exception} yields that $J_{f_n}$ is a one-dimensional topological space of Hausdorff dimension $1$. 
According to Hamilton \cite{Ham}, a Julia set which is a one-dimensional topological manifold must be either a circle, closed line segment (up to an automorphism of $\P^1$) or of Hausdorff dimension greater than one; thus $J_{f_n}$ is itself a circle or a closed line segment (up to an automorphism of $\P^1$). This yields that $f_n$ must be conjugated to a monomial or a $\pm$Chebyshev polynomial, which concludes the proof of Corollary~\ref{non-exceptional}.  
\end{proof}

%%%%%%%%%%%%%%%%%%%%%%%%%%%%%%%%%%%%%%%%%%%%%%%%%%%%%%%%%%%%%%%%%%%%%%%%%%%%%%%

\subsection{Conclusion of our arguments}

%%%%%%%%%%%%%%%%%%%%%%%%%%%%%%%%%%%%%%%%%%%%%%%%%%%%%%%%%%%%%%%%%%%%%%%%%%%%%%

Corollary~\ref{non-exceptional} yields that all we have left to prove in Theorem~\ref{measure to periodicity} is that $f_{n+1}$ must be exceptional. So, from now on, we assume that $f_{n+1}$ is non-exceptional and we will derive a contradiction.

\begin{lemma}\label{identical symmetry}
Let $\mathcal S$ be the family of symmetries of $J_{f_{n}}$ on $D(a_n, r)$ for some $r>0$. Then there exists $\epsilon>0$ such that for any $\zeta\in \mathcal S$ with 
   $$\sup_{x\in B(a_n, r)} |\zeta(x)-x|<\epsilon,$$
we must have $\zeta(x)\equiv x$ for  $x\in D(a_n, r)$. 
\end{lemma}

\begin{proof}[Proof of Lemma \ref{identical symmetry}] Suppose this lemma is not true, then there exists a sequence of integers $\epsilon_\ell>0$ with $\epsilon_\ell\to 0$ as $\ell$ tends to infinity, and a sequence of functions  $\zeta_\ell\in \mathcal S$, which are not the identity map, such that 
\begin{equation}
\label{epsilon l}
\sup_{x\in D(a_n, r)}|\zeta_\ell(x)-x|=\epsilon_\ell.
\end{equation}
Consequently, $\{\zeta_\ell(x)\}_{\ell\geq 1}$ is a normal family with no subsequence having a constant limit (because $\zeta_\ell$ tends to the identity map as $\ell\to \infty$). By Levin's result \cite{Levin},  $\{\zeta_\ell\}_{\ell\geq 1}$ must consist of finitely many elements, which is a contradiction because there are infinitely many distinct real numbers  $\epsilon_\ell$ as in \eqref{epsilon l}. 
\end{proof}

\begin{lemma}\label{identical maps}
There exists  $N\in \N$, such that $\Phi_\ell$ is the identity map on $B(\tilde a, r_0)$ for all $\ell\geq N$. 
\end{lemma}

\begin{proof}[Proof of Lemma \ref{identical maps}] By abuse of notation, let $\tilde \mu_{n+1}$ and $\tilde \mu_{n}$ be the measures $\tilde \mu_{n+1}$ and $\tilde \mu_{n}$ in \eqref{measure tilde i} restricted on $D_n(\tilde a, r_1)$ and respectively, on $D_n(\tilde b, r_2)$ for $\tilde b=(a_1, \dots, a_{n-1}, b_0)$ and  small radii $r_1, r_2$.  Since $\hat \mu_n=\hat \mu_{n+1}$, there exist constants $c_\ell>0$, such that 
   $$\Phi_\ell^*(\tilde \mu_{n+1})=c_\ell \cdot \tilde \mu_{n+1}.$$
By Proposition \ref{equal currents}, we see that for any $\underline \alpha$ in $D_{n-1}(\underline a, r_1)\cap J_{f_1}\times \cdots \times J_{f_{n-1}}$, the map $\hbar_\ell(\underline \alpha, \cdot)$ is a symmetry of $J_{f_n}$ on $D(a_n, r_2)$. Moreover, the functions $\hbar_\ell(\tilde x)$ converge uniformly to $\hbar(\tilde x):=x_n$ on $D_n(\tilde a, r_1)$ as $\ell$ tends to infinity. Applying Lemma \ref{identical symmetry}, there exists $N\in \N$, such that for any $\ell\geq N$ and any $\underline \alpha$ in $D_{n-1}(\underline a, r_1)\cap \left(J_{f_1}\times \cdots \times J_{f_{n-1}}\right)$, we have 
   $$\hbar_\ell(\underline \alpha, x_n)= x_n$$
for each $x_n\in D(a_n, r_1)$. Since $a_i$ is an accumulating point in $J_{f_i}$ for each $i$ (see \cite{Milnor:book}), when $\ell\geq N$, the zero locus of the equation $\hbar_\ell(\tilde x )-x_n=0$ on $D_n(\tilde a, r_1)$ cannot have dimension $\leq n-1$, i.e., $\hbar_\ell(\tilde x )$ is identically equal to $x_n$ and so, $\Phi_\ell$ is the identity map. This concludes the proof of Lemma~\ref{identical maps}. 
\end{proof}

Let $N$ be the positive integer appearing in Lemma \ref{identical maps}. Pick $\ell_2>\ell_1>N$ with $j_{\ell_2}\ge j_{\ell_1}$ and $j_{i, \ell_2}\ge j_{i, \ell_1}$ for $1\leq i\leq n$. Let 
  $$m_i:=j_{i,\ell_2}-j_{i,\ell_1}\textup{ for $1\le i\le n$ and } m_{n+1}:=j_{\ell_2}-j_{\ell_1}.$$

\begin{lemma}
\label{first invariant}
With the above notation for the $m_i$'s, let 
$$H':=(f_1^{m_1}, \cdots, f_{n+1}^{m_{n+1}})(H)\subset (\bP^1)^{n+1}_\C.$$
Then $(f_1^{m_1}, \cdots, f_{n+1}^{m_{n+1}})(H')=H'$. 
\end{lemma}

\begin{proof}[Proof of Lemma~\ref{first invariant}.]
From Lemma \ref{identical maps} (see also \eqref{split equation}), we have that  
$$g^{j_{\ell_1}}\circ h\circ (f_1^{-j_{1,{\ell_1}}}, \cdots, f_n^{-j_{n,{\ell_1}}})(\tilde x)=g^{j_{\ell_2}}\circ h\circ (f_1^{-j_{1,{\ell_2}}}, \cdots, f_n^{-j_{n,{\ell_2}}})(\tilde x)$$
on $D_n(\tilde a, r_0)$, or equivalently 
\begin{equation}\label{identical hypers}
h(\tilde x)=g^{m_{n+1}}\circ h\circ (f_1^{-m_1}, \cdots, f_n^{-m_n})(\tilde x).
\end{equation}
Let 
  $$h'(\tilde x): =f_{n+1}^{m_{n+1}}\circ h\circ (f_1^{-m_1}, \cdots, f_n^{-m_n})(\tilde x)$$
on a neighborhood of $\tilde a$. Now consider the analytic equation
   $$h'(\tilde x)-x_{n+1}=0$$
 on a neighbourhood of $(\tilde a, b_1)\in \mathbb{P}_\C^n\times \mathbb{P}_\C^1$. The zero set of this equation is an analytic set of dimension $n$ passing through the point $(\tilde a, b_1)$. For $\tilde x$ close to $\tilde a$, the points of the form $(\tilde x, h'(\tilde x))$ lie on the hypersurface $H'$.
Combining \eqref{def of g} and \eqref{identical hypers}, we get  
  $$ h'\circ  (f_1^{m_1}, \cdots, f_n^{m_n})(\tilde x)=f^{m_{n+1}}\circ h'(\tilde x).$$
Hence for points $\tilde x$ close to $\tilde a$, the points $(f_1^{m_1}, \cdots, f_{n+1}^{m_{n+1}})(\tilde x, h'(\tilde x))$, which are points on $(f_1^{m_1}, \cdots, f_{n+1}^{m_{n+1}})(H')$  satisfy also the equation $h'(\tilde x)-x_{n+1}=0$. Finally, as both $H'$ and  $(f_1^{m_1}, \cdots, f_{n+1}^{m_{n+1}})(H')$ share an analytic set of dimension $n$ in a neighbourhood of $(\tilde a, b_1)$, they must be identical. So $H'$ is fixed by the endomorphism $(f_1^{m_1}, \cdots, f_{n+1}^{m_{n+1}})$ of $(\bP^1)^{n+1}$, as desired. 
\end{proof}

We recall that our sequence of tuples $(j_\ell,j_{1,\ell},\dots, j_{n,\ell})$ satisfies condition \eqref{arithmetic condition}. Therefore, we can choose some integers $\ell_2>\ell_1>N$ such that $j_{\ell_2}=j_{\ell_1}$ and also, $j_{i,\ell_2}=j_{i,\ell_1}$ for $i=2,\dots, n$, while $j_{1,\ell_2}=j_{1,\ell_1}+1$ and then apply Lemma~\ref{first invariant} to the tuple of integers 
$$m_i:=j_{i,\ell_2}-j_{i,\ell_1}\text{ for $1\le i\le n$ and }m_{n+1}:=j_{\ell_2}-j_{\ell_1}.$$
We have that $m_{i}=0$ for each $i=2,\dots, n+1$, while $m_1=1$.  
Therefore, Lemma~\ref{first invariant} yields that 
\begin{equation}
\label{eq third invariant}
\left(f_1^2,{\rm id},\cdots, {\rm id}\right)(H)=\left(f_1,{\rm id},\cdots, {\rm id}\right)(H),
\end{equation}
where the action in \eqref{eq third invariant} on coordinates $x_i$ for $2\le i\le n+1$ is given by the corresponding identity maps. 
Equation \eqref{eq third invariant} yields that $H$ is a hypersurface of the form $\bP^1\times H_0$ (for some hypersurface $H_0\subset (\bP^1)^n$), contradicting thus our hypothesis that $H$ projects dominantly onto any subset of $n$ coordinate axes. Hence $f_{n+1}$ (and thus each of the $f_i$'s, as shown in Corollary~\ref{non-exceptional}) must be exceptional; this concludes our proof of Theorem~\ref{measure to periodicity}.
\end{proof}

%###########################################################################
%############################################################################

\section{Conclusion of our proof}
\label{section proof of main results}

In this Section we finish our proof of Theorem~\ref{general hypersurface theorem} and then we prove Theorem~\ref{shape preperiodic}. Since we showed in Proposition~\ref{prop first reduction} that it suffices to assume in Theorems~\ref{main result}~and~\ref{general DMM result} that the subvariety $V\subset (\bP^1)^n$ is a hypersurface projecting dominantly onto each subset of $(n-1)$ coordinate axes, then this will conclude our proof for both of those two theorems.

\begin{proof}[Proof of Theorem~\ref{general hypersurface theorem}.]
So, we have a hypersurface $H\subset (\bP^1)^n$ (for some integer $n>2$)  containing a Zariski dense set of  points satisfying either hypothesis~(1) or hypothesis~(2) in Theorem~\ref{general hypersurface theorem}. Furthermore, $H$ projects dominantly onto any subset of $(n-1)$ coordinate axes of $(\bP^1)^n$. We let $\hat \mu_i$ (for $i=1,\dots, n$) be the measures introduced in Subsection~\ref{subsection measures 2}.

\begin{lemma}
\label{equal measures lemma}
We have $\hat \mu_1=\hat \mu_2=\cdots = \hat \mu_n$. 
\end{lemma}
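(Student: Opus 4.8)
The plan is to derive Lemma~\ref{equal measures lemma} from Theorem~\ref{equal measure thm}, treating the two hypotheses of Theorem~\ref{general hypersurface theorem} separately and reducing hypothesis~(1) to a situation where all the data live over $\Qbar$.

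Suppose first that hypothesis~(2) holds, so that $H$ and the $f_i$ are defined over $\Qbar$ and we are given a Zariski dense sequence $t_j=(x_{1,j},\dots,x_{n,j})\in H(\Qbar)$ with $\sum_{i=1}^{n}\hhat_{f_i}(x_{i,j})\to 0$. The values of $\sum_{i=1}^n\hhat_{f_i}$ on $H$ are a positive multiple of the point-heights of the semipositive bundle $\cLbar_{\hat F_1}+\cdots+\cLbar_{\hat F_n}$, by \eqref{equation height cL i}; so the existence of the sequence $\{t_j\}$ forces the essential minimum of this bundle to vanish (exactly as in the derivation of \eqref{height of H}), and a standard diagonal extraction then produces a \emph{generic} sequence of points of $H$ with $\sum_i\hhat_{f_i}(x_{i,j})\to 0$. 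Now Theorem~\ref{equal measure thm} applies and gives $\hat\mu_1=\cdots=\hat\mu_n$ (read at the archimedean place of $\Qbar$ coming from the embedding $\Qbar\hookrightarrow\C$).

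Now suppose hypothesis~(1) holds. If in addition $H$ and the $f_i$ are defined over $\Qbar$, then every $\Phi$-preperiodic point $(a_1,\dots,a_n)\in H$ has each $a_i$ preperiodic under $f_i$ (the action being coordinatewise), so $\hhat_{f_i}(a_i)=0$ by the characterization of the canonical height recalled in Subsection~\ref{height subsection}; thus the Zariski dense set of preperiodic points is a Zariski dense sequence with $\sum_i\hhat_{f_i}\equiv 0$, and we are back in the case just treated. For a general hypersurface $H$ over $\C$, I would argue by specialization exactly as in the proof of Proposition~\ref{iff proposition}: pick a finitely generated field $K$ with $\Qbar\subseteq K\subseteq\C$ carrying $H$, the $f_i$ and the dense set of preperiodic points, induct on $\trdeg_{\Qbar}K$, and in the inductive step spread everything out over a smooth projective curve $\mathcal C$ with function field $K$ over a subfield $K_0$ with $\trdeg_{K_0}K=1$, as in Claim~\ref{specialization claim}. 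Since $d_1=\cdots=d_n$, the endomorphism $\Phi=(f_1,\dots,f_n)$ of $(\bP^1)^n$ is polarizable, so the Yuan--Zhang specialization theorem \cite{Yuan-Zhang-1,Yuan-Zhang-2} keeps the preperiodic set Zariski dense on the fibre $H_t$ for all $t$ in a Zariski dense subset of $\mathcal C(\overline{K_0})$, with $\deg f_{i,t}=d_i>1$; the inductive hypothesis applied to $H_t$ over $\overline{K_0}$ then gives $\hat\mu_{1,t}=\cdots=\hat\mu_{n,t}$ on $H_t(\C)$ for all such $t$.

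The hard part is this last step---transferring the identity from a Zariski dense family of $\overline{K_0}$-fibres back to the complex generic fibre $H$---since specialization is not continuous for the archimedean measures $\hat\mu_i$. The clean way I would handle it is organizational: the measures $\hat\mu_i$ are used in the proof of Theorem~\ref{general hypersurface theorem} only as input to Theorem~\ref{measure to periodicity}, whose output is the \emph{geometric} statement that each $f_i$ is exceptional (all conjugate to monomials and $\pm$Chebyshev polynomials, or all Latt\`es); this is a proper Zariski-closed condition in the moduli of degree-$d$ rational maps, so if it holds on a Zariski dense set of members of a one-parameter family it holds on the generic member. Hence one may reduce Theorem~\ref{general hypersurface theorem} itself, by the same induction on $\trdeg_{\Qbar}K$, to the case over $\Qbar$, where Lemma~\ref{equal measures lemma} is exactly the case of hypothesis~(1) over $\Qbar$ treated above. (Alternatively, and more directly, one can apply the equidistribution theorem over the finitely generated base field $K$ itself, using the arithmetic of adelic line bundles over quasi-projective varieties due to Moriwaki and to Yuan--Zhang, whose archimedean places recover every complex embedding of $K$; for our fixed embedding this yields $\hat\mu_i=\hat\mu_n$ on $H(\C)$ without any specialization.) Either route, together with the first two cases, completes the proof.
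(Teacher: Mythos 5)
Your treatment of hypothesis~(2), and of hypothesis~(1) when everything is already defined over $\Qbar$, is correct and is essentially the paper's argument (including the extraction of a \emph{generic} small sequence from a Zariski dense one, which the paper leaves implicit before invoking Theorem~\ref{equal measure thm}). The gap is in hypothesis~(1) over $\C$, which is the actual content of the lemma. You set up the specialization just as the paper does (spread the data out over a base with function field $K$, use the Yuan--Zhang specialization theorem as in Claim~\ref{specialization claim} to keep the preperiodic points Zariski dense in the fibres, and apply the $\Qbar$-case to get $\hat\mu_{1,t}=\cdots=\hat\mu_{n,t}$ for all algebraic parameters $t$ in a dense open subset of the base), but you then stop, asserting that the passage back to the complex generic fibre is blocked because ``specialization is not continuous for the archimedean measures.'' That assertion is precisely where you part ways with the paper, and it is mistaken: for a family of rational maps of constant degree $d_i\ge 2$ the maximal entropy measures $\mu_{f_{i,t}}$ --- equivalently their local potentials --- vary continuously with the coefficients of $f_{i,t}$, and hence the measures $\hat\mu_{i,t}$ (viewed, say, as $(n-1,n-1)$-currents on $(\bP^1)^n$ supported on $H_t$, which takes care of the fact that the hypersurface itself moves) vary continuously with the parameter. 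Since the $\Qbar$-points of the parameter variety are dense in its complex points for the archimedean topology, the identity $\hat\mu_{1,t}=\cdots=\hat\mu_{n,t}$, valid on that dense set, passes to every complex parameter, in particular to the one given by the chosen embedding $K\subset\C$. This continuity argument is exactly how the paper concludes, and it is the step your write-up is missing; note also that the paper does not induct on transcendence degree here --- it specializes directly at $\Qbar$-points of a $\Q$-variety with function field $K$, so only the number-field case of the lemma is needed as input.

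Neither of your proposed substitutes repairs this as a proof of the lemma as stated. The ``organizational'' route would at best reprove Theorem~\ref{general hypersurface theorem} by reduction, not the measure identity the lemma asserts (and which is what feeds into Theorem~\ref{measure to periodicity} at the complex parameter), and it rests on the unproved claim that exceptionality is a Zariski-closed condition on families of degree-$d$ rational maps, which itself requires an argument (the Latt\`es locus and the monomial/Chebyshev conjugacy classes are a priori only constructible). The ``more direct'' route invokes equidistribution over finitely generated fields at a fixed complex embedding (Moriwaki heights, Yuan--Zhang adelic line bundles over quasi-projective bases); this is far heavier machinery than anything the paper uses, and you do not verify any of its hypotheses (choice of polarization of the base, vanishing of the relevant height of $H$, genericity, and what equidistribution at the single archimedean place attached to the given embedding even means in that framework). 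As written, then, the crucial case of the lemma is left unproved; inserting the continuity-of-potentials argument above closes the gap along the paper's own lines.
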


\begin{proof}[Proof of Lemma~\ref{equal measures lemma}.]
If each $f_i$ and also $H$ are defined over $\Qbar$ (i.e, hypothesis~(2) in Theorem~\ref{general hypersurface theorem} is met), then the conclusion of Lemma~\ref{equal measures lemma} follows immediately from Theorem~\ref{equal measure thm}. So, assume now that each $f_i$ and also $H$ are defined over $\C$, and moreover hypothesis~(1) in Theorem~\ref{general hypersurface theorem} is met; in particular, $\deg(f_1)=\deg(f_2)=\cdots = \deg(f_n)$. We prove the result in this general case using a specialization technique similar to the one employed in the proof of Claim~\ref{specialization claim}. 

So, we let $K$ be a finitely generated subfield of $\C$ such that each $f_i$ and also $H$ are defined over $K$, and let $\Kbar$ be a fixed algebraic closure of $K$ in $\C$. We know there exists an infinite sequence $S:=\{(x_{1,j}, \dots, x_{n,j})\}\subset H(\C)$ such that each $x_{i,j}$ is a preperiodic point for $f_i$ for $i=1,\dots, n$ and for each $j\ge 1$. Then the $f_i$'s are  base changes of endomorphisms $f_{i,K}$  of $\bP^1_K$ (for $i=1,\dots, n$); similarly, $S$ is the base change of a subset $S_K\subset H(\Kbar)$.  We can further extend $f_{i,K}$ to endomorphisms
$$f_{i,U}:  \bP^1_U\lra \bP^1_U$$
over a variety $U$ over $\Q$ of finite type and with function field $K$. 
For each geometric point $t\in U(\Qbar)$, the objects $f_{i,U}$ and $S_U$ have reductions $f_{i,t}$ and $S_t$ such that $S_t$ consists of points with  coordinates preperiodic under the action of the $f_{i,U}$'s. We also let $\hat{\mu}_{i,t}$ (for $i=1,\dots, n$) be the probability measures on $H_t$ obtained as pullback through the usual projection map onto $(n-1)$ coordinates (with the exception of the $i$-th coordinate axis) of the invariant measures on $(\bP^1_\C)^{n-1}$ corresponding to each $f_{j,t}$ for $j\ne i$.  
As proven in Claim~\ref{specialization claim} (using \cite[Theorem~4.7]{Yuan-Zhang-2} and also \cite[Lemma~3.2.3]{Yuan-Zhang-1}), we obtain that the subset $S_t\subset H_t$ is still Zariski dense for all the $\Qbar$-points $t$ in a dense open subset $U_0\subseteq U$. Thus, as proven in Theorem~\ref{equal measure thm}, we conclude that 
$$\hat{\mu}_{1,t}=\hat{\mu}_{2,t}=\cdots =\hat{\mu}_{n,t}$$ 
for each $t\in U_0(\Qbar)$. Since $U_0(\Qbar)$ is dense in $U(\C)$ with respect to the usual archimedean topology, while the measures $\hat{\mu}_{i,t}$ vary continuously with the parameter $t$ (since from the construction,  the potential functions of these measures vary continuously with the coefficients of $f_{i,t}$), we conclude that 
$$\hat{\mu}_{1,t}=\hat{\mu}_{2,t}=\cdots =\hat{\mu}_{n,t}$$ 
for all points in $U(\bC)$ including the point corresponding to the original embedding $K\subset \C$. Thus $\hat \mu_1=\hat \mu_2=\cdots =\hat \mu_n$, which   concludes the proof of Lemma~\ref{equal measures lemma}. 
\end{proof}

Lemma~\ref{equal measures lemma} yields that the hypotheses of Proposition~\ref{iff proposition} are met and so, we know that there exists an index $i$, which we assume (without loss of generality) to be $n$ so that for each $\alpha:=(a_1,\dots, a_n)\in H(\C)$, if $a_i$ is preperiodic under the action of $f_i$ for $i=1,\dots, n-1$, then also $a_n$ is preperiodic under the action of $f_n$. 

Since all but finitely many periodic points of a rational map are repelling, and also, there is a Zariski dense open subset of points $\alpha\in H$ such that the restriction of the natural projection map $\pi|_H:H\lra (\bP^1)^{n-1}$ on the first $(n-1)$ coordinate axes is unramified, then we can find a point $(x_{1,0},\dots, x_{n,0})\in H(\C)$ satisfying the following properties:
\begin{itemize}
\item[(a)] $x_{i,0}$ is a periodic repelling point for $f_i$ for each $i=1,\dots, n-1$; and
\item[(b)] there is a non-constant holomorphic germ $h_0$ defined in a neighborhood of $\tilde{x}_0:=(x_{1,0},\dots, x_{n-1,0})$, with $h_0\left(\tilde{x}_0\right)=x_{n,0}$ and $\left(\tilde{x},h_0(\tilde{x})\right)\in H(\C)$ for all $\tilde{x}$ in a small neighborhood of $\tilde{x}_0$. Moreover, we also have that 
\begin{equation}
\label{closed condition 0}
\frac{\partial h_0}{\partial x_i}(\tilde{x}_0)\neq 0\text{ for each $i=1,\dots, n-1$.}
\end{equation}
\end{itemize}
Note that hypothesis \eqref{closed condition 0} can be achieved since the points satisfying $\frac{\partial h}{\partial x_i}= 0$ live in a proper Zariski closed subset of $H$ (i.e., inequality \eqref{closed condition} is an \emph{open condition} which can be seen from computing the partial derivatives using implicit functions). It is essential in this case to know that $H$ projects dominantly onto each subset of $(n-1)$ coordinates, i.e., $H$ is \emph{not} of the form $\bP^1\times H_0$ for some hypersurface $H_0\subset (\bP^1)^{n-1}$ since otherwise condition~\ref{closed condition 0} would not necessarily hold.

Proposition~\ref{iff proposition} and condition~(a) above yield that $x_{n,0}$ is preperiodic for $f_n$. At the expense of replacing each $f_i$ by $f_i^\ell$ (for a suitable positive integer $\ell$), we may assume that
\begin{itemize}
\item $x_{i,0}$ is a repelling fixed  point of $f_i$ for $1\leq i\leq n-1$;  
\item $x_{n,1}:=f_{n}(x_{n,0})$ is a fixed point of $f_{n}$; and
\item there is a holomorphic germ $h(\tilde x)$ near $\tilde{x}_0=(x_{1,0},\dots, x_{n-1,0})$ with $h(\tilde{x}_0)=x_{n,0}$,  and $(\tilde x, h(\tilde x))\in H(\C)$ for all $\tilde x\in (\bP^1)^{n-1}(\C)$ in a small (complex analytic) neighbourhood of $\tilde{x}_0$. Moreover, for each $i=1, \cdots, n-1$ we have that 
\begin{equation}
\label{closed condition}
\beta_i:=\frac{\partial h}{\partial x_i}(\tilde{x}_0)\neq 0.
\end{equation}
\end{itemize}
Then all hypotheses in Theorem~\ref{measure to periodicity} are met; this yields that each $f_i$ must be either all conjugate to monomials and $\pm$Chebyshev polynomials, or they are all Latt\'es maps, which concludes our proof of Theorem~\ref{general hypersurface theorem}.   
\end{proof}

We finish our paper by proving Theorem~\ref{shape preperiodic}.

\begin{proof}[Proof of Theorem~\ref{shape preperiodic}.]
First we observe (similar to the proof of Proposition~\ref{prop first reduction}) that it suffices to prove that each irreducible, preperiodic  hypersurface  $H\subset (\bP^1)^n$ is of the form $\pi_{i,j}^{-1}\left(C_{i,j}\right)$ (for a pair of indices $i,j\in\{1,\dots, n\}$), where $C_{i,j}\subset \bP^1\times \bP^1$ is a curve, which is preperiodic under the action of $(x_i,x_j)\mapsto \left(f_i(x_i), f_j(x_j)\right)$ (and $\pi_{i,j}$ is the projection of $(\bP^1)^n$ onto the $(i,j)$-th coordinate axes). Indeed, just as in the proof of Proposition~\ref{prop first reduction}, we obtain that any preperiodic subvariety $V\subset (\bP^1)^n$ is a component of an intersection of preperiodic hypersurfaces, thus reducing our proof to the case $V$ is a hypersurface. 

Since the case $n=2$ was proved in \cite[Theorem~1.1]{GNY}, from now on, we assume $V\subset (\bP^1)^n$ is a hypersurface and $n>2$. Then, at the expense of replacing $\Phi=(f_1,\dots, f_n)$ by an iterate of it and also replacing the hypersurface $V$ by a suitable $\Phi^k(V)$ (for $k\in\N$), we may (and do) assume that $V$ is invariant under the action of $\Phi$. Also, we may assume $V$ projects dominantly onto each subset of $(n-1)$ coordinate axes of $(\bP^1)^n$ since otherwise $V=\bP^1\times V_0$ and then we can argue inductively on $n$ (because $V_0\subset (\bP^1)^{n-1}$ would be invariant under the induced action of $\Phi$ on those $(n-1)$ coordinate axes). Next we will prove there are \emph{no} such hypersurfaces, thus providing the desired conclusion in Theorem~\ref{shape preperiodic}.

We let $\pi|_V:V\lra (\bP^1)^{n-1}$ be the projection on the first $n-1$ coordinate axes; we know there exists a Zariski open subset $U\subset (\bP^1)^{n-1}$ such that $\pi|_V^{-1}(\beta)$ is finite for each $\beta\in U$. 

Now, let $\beta:=(a_1,\dots, a_{n-1})\in U(\C)$ such that each $a_i$ is periodic under the action of $f_i$. We claim that each point $\alpha\in V(\C)$ satisfying  $\pi|_V(\alpha)=\beta$ is preperiodic under the action of $\Phi$, i.e., its last coordinate is preperiodic for $f_n$. Indeed, since $\beta$ is periodic, then for some positive integer $m$, we have that $\Phi^m(\alpha)\in \pi|_V^{-1}(\beta)$ and because $\pi|_V^{-1}(\beta)$ is a finite set, we conclude that the last coordinate of $\alpha$ (and therefore, $\alpha$ itself) must be preperiodic, as claimed.

At the expense of shrinking $U$ to a smaller, but still Zariski dense, open subset, we may even assume $\pi|_V$ is unramified above each point of $U$. Then we can argue as in the proof of Theorem~\ref{general hypersurface theorem} and find a point $(x_{1,0},\dots, x_{n,0})$ satisfying the conditions: 
\begin{itemize}
\item[(a)] $x_{i,0}$ is a periodic repelling point for $f_i$ for each $i=1,\dots, n-1$; and
\item[(b)] there is a non-constant holomorphic germ $h_0$ defined in a neighborhood of $\tilde{x}_0:=(x_{1,0},\dots, x_{n-1,0})$, with $h_0\left(\tilde{x}_0\right)=x_{n,0}$ and $\left(\tilde{x},h_0(\tilde{x})\right)\in V(\C)$ for all $\tilde{x}$ in a small neighborhood of $\tilde{x}_0$. Moreover, we also have that $\frac{\partial h_0}{\partial x_i}(\tilde{x}_0)\neq 0$ for each $i$. 
\end{itemize}
Furthermore, after replacing $\Phi$ by yet another iterate, we get that each $x_{i,0}$ is fixed by $f_i$. Then we meet the hypotheses of Theorem~\ref{measure to periodicity} and since we assumed that each $f_i$ is non-exceptional, we derive a contradiction. This concludes our proof of Theorem~\ref{shape preperiodic}. 
\end{proof}

%############################################################################
%############################################################################

%###########################################################################
%###########################################################################

%##############################################################################
%##############################################################################


\begin{thebibliography}{AKLS09}
\newcommand{\au}[1]{{#1},}
\newcommand{\ti}[1]{\textit{#1},}
\newcommand{\jo}[1]{{#1}}
\newcommand{\vo}[1]{\textbf{#1}}
\newcommand{\no}[1]{no. {#1},}
\newcommand{\yr}[1]{(#1),}
\newcommand{\pp}[1]{#1.}
\newcommand{\ppx}[1]{#1,}
\newcommand{\pps}[1]{#1;}
\newcommand{\bk}[1]{{#1},}
\newcommand{\inbk}[1]{in: {#1}}
\newcommand{\xxx}[1]{{arXiv:#1}}

%\bibitem[AKLS09]{AKLS}
%A. ~Avila, J. ~Kahn, M. ~Lyubich and W. ~Shen, \emph{Combinatorial rigidity for %unicritical polynomials}, Ann. of Math. (2) {\bf 170} (2009), no. 2, 783--797.

%Ye
%\bibitem[Bea91]{Beardon:book}
%A.~ Beardon, \emph{Iteration of rational functions}, Graduate Texts in %Mathematics 132, Springer-Verlag, (1991). 

\bibitem[Bak09]{Baker}
\au{M.~Baker}
\ti{A finiteness theorem for canonical heights attached to rational maps over function fields} 
\jo{J. Reine Angew. Math.}
\vo{626}
\yr{2009}
\pp{205-233}

\bibitem[BH05]{Baker-Hsia}
\au{M.~Baker and L.-C.~Hsia}
\ti{Canonical heights, transfinite diameters, and polynomial dynamics}
\jo{J. reine angew. Math.}
\vo{585}
\yr{2005}
\pp{61--92}


\bibitem[BR06]{BR}
M.~Baker and R.~Rumely, \emph{Equidistribution of small points, rational
  dynamics, and potential theory}, Ann. Inst. Fourier (Grenoble) \textbf{56}
  (2006), 625--688.

\bibitem[Ben05]{Rob}
\au{R.~L.~Benedetto}
\ti{Heights and preperiodic points of polynomials over function fields}
\jo{Int. Math. Res. Not.}
\vo{62} 
\yr{2005}
\pp{3855-3866} 


\bibitem[BG06]{Bombieri}
\au{E.~Bombieri and W.~Gubler}
\ti{Heights in Diophantine geometry} 
New Mathematical Monographs \textbf{4}, Cambridge University Press, Cambridge, 2006. xvi+652 pp.

\bibitem[Bro65]{Bro}
H. ~Brolin, \emph{Invariant sets under iteration of rational functions},  Ark. Mat. {\bf 6}(1965), 103--144.



%\bibitem[BR10]{Baker-Rumely}
%\bysame,
%\ti{Potential theory and dynamics on the Berkovich projective line}
%Mathematics Surveys and Monographs \textbf{159}, American Mathematical Society, Providence, RI, 2010.

\bibitem[CS93]{C-S}
G. ~Call and J.~Silverman, \emph{Canonical heights on varieties with morphisms},  Compositio Math. \textbf{89}(1993), 163--205.

\bibitem[CL06]{C-L}
A.~Chambert-Loir,
\newblock{\em Mesures et \'equidistribution sur les espaces de Berkovich},
\newblock{ J. Reine Angew. Math.} {\bf 595} (2006), 215--235.

\bibitem[DF]{Favre-Dujardin}
R.~Dujardin and C.~Favre, \emph{The dynamical Manin-Mumford problem for plane polynomial automorphisms},  J. Eur. Math. Soc., 2016, to appear.

%Ye 
\bibitem[DH93]{DH}
A. ~Douady and J. ~Hubbard, \emph{A proof of Thurston's topological characterization of rational functions}, Acta. Math. {\bf 171} (1993) 263--297.

\bibitem[Fab09]{Faber}
\au{X.~W.~C.~Faber}
\ti{Equidistribution of dynamically small subvarieties over the function field of a curve}
\jo{Acta Arith.}
\vo{137}
\yr{2009} 
\pp{345--389}


%\bibitem[Fak03]{Fakhruddin}
%N.~Fakhruddin, \emph{Questions on self maps of algebraic varieties}, J. %Ramanujan Math. Soc. \textbf{18} (2003), no.~2, 109-Â122.

\bibitem[Fal84]{Faltings-Hodge}
G.~Faltings, \emph{Calculus on arithmetic surfaces}, Ann. Math. \textbf{119} (1984), 387-424.


\bibitem[FRL06]{favre-rivera06}
C.~Favre and J.~Rivera-Letelier, 
  \emph{\'Equidistribution quantitative des points de petite hauteur sur
    la droite projective}, Math. Ann. \textbf{355} (2006), 311--361.


\bibitem[FLM83]{FLM}
A. ~Freire, A. ~Lopes and R. ~Ma\~n\'e, \emph{An invariant measure for rational maps},  Bol. Soc. Brasil
Mat. \textbf{14} (1983) 45-62.

%\bibitem[GHT13]{GHT}
%\au{D.~Ghioca, L.-C.~Hsia, and T.~J.~Tucker}
%\ti{Preperiodic points for families of polynomials}
%\jo{Algebra $\&$ Number Theory}
%\vo{7}
%\yr{2013}
%\no{3}
%\pp{701--732}

\bibitem[GN16]{DBHC}
\au{D.~Ghioca and K.~D.~Nguyen}
\ti{Dynamical Anomalous Subvarieties: Structure and Bounded Height Theorems} \jo{Adv. Math.} 
\vo{288}
\yr{2016} 
\pp{1433--1462}

\bibitem[GN]{DPZ}
\au{D.~Ghioca and K.~D.~Nguyen}
\ti{The Dynamical Pink-Zilber Conjecture} 
preprint.

\bibitem[GNY]{GNY}
\au{D.~Ghioca, K.~D.~Nguyen and H.~Ye}
\ti{The Dynamical Manin-Mumford Conjecture and the Dynamical Bogomolov Conjecture for split rational maps}
\jo{J. Eur. Math. Soc.,}
2017, 25 pages, to appear.

\bibitem[GT10]{GT-Bogomolov}
\au{D.~Ghioca and T.~J.~Tucker} 
\ti{Proof of a dynamical Bogomolov conjecture for lines under polynomial actions}
\jo{Proc. Amer. Math. Soc.}
\vo{138}
\yr{2010}
\pp{937--942}

\bibitem[GTZ11]{GTZ}
\au{D.~Ghioca, T.~J.~Tucker and S.~Zhang}
\ti{Towards a dynamical Manin-Mumford Conjecture}
\jo{Int. Math. Res. Not. IMRN}
2011, no. 22, 5109--5122.


%\bibitem[GTZ12]{GTZ12}
%D.~Ghioca, T.~J.~Tucker and M.~Zieve, 
%\emph{Linear relations between polynomial orbits}, 
%Duke Math. J.
%\textbf{161} (2012), no.~7, 1379--1410.

\bibitem[Gub08]{Gubler} 
\au{W.~Gubler}
\ti{Equidistribution over function fields} 
\jo{Manuscripta Math.}
\vo{127}
\yr{2008} 
\pp{485-510}

\bibitem[Ham95]{Ham}
D. ~Hamilton, \emph{Length of Julia curves}, Pacific J. Math. {\textbf{169}} (1995), no. 1, 75--93.

\bibitem[Hri85]{Hriljac}
P.~Hriljac, \emph{Heights and Arakelovs intersection theory}, Amer. J. Math. \textbf{107} (1985), 23-38.

\bibitem[Ino11]{Inou}
H.~Inou, \emph{Extending local analytic conjugacies}, Trans. Amer. Math. Soc. \textbf{363} (2011), no.~1, 331--343.

%\bibitem[Lan02]{Lang}
%S.~Lang, \emph{Algebra}, Revised Third Edition, Graduate Texts in Mathematics vol.~211, Springer, 2002.

\bibitem[Lau84]{Laurent}
M. Laurent, \emph{Equations diophantiennes exponentielles}, Invent. Math. \textbf{78} (1984), 299--327.

\bibitem[Lev90]{Levin}
G.~M. Levin, \emph{Symmetries on {J}ulia sets}, Mat. Zametki \textbf{48}
  (1990), 72--79, 159.

%Ye
\bibitem[Lyu83]{Lyu1}
M. ~Lyubich, \emph{Entropy properties of rational endomorphisms of the Riemann sphere},  Erg. Th.
and Dyn. Sys. \textbf{3} (1983), 351--385.



%Ye
\bibitem[Man83]{Man}
R. ~Ma\~n\'e, \emph{On the uniqueness of the maximizing measure for rational maps}, Bol. Soc. Bras.
Math. {\bf 14.2} (1983), 27--43.




\bibitem[McQ95]{McQuillan}
M.~McQuillan, \emph{Division points on semi-abelian varieties}, 
Invent. Math. \textbf{120} (1995), no. 1, 143--159. 

\bibitem[Med07]{Alice}
A.~Medvedev, \emph{Minimal sets in ACFA}, Thesis (Ph.D.)--University of California, Berkeley, 2007, 96 pp.

\bibitem[MS14]{Medvedev-Scanlon}
A. Medvedev and T. Scanlon, \emph{Invariant varieties for polynomial dynamical systems},
Ann. of Math. (2) \textbf{179} (2014), no. 1, 81--177.


%Ye
\bibitem[Mil00]{Milnor:book}
J.~Milnor, \emph{Dynamics in one complex variable}, Wiesbaden, Germany: Vieweg. ISBN 3-528-13130-6. (2000).


%Ye
\bibitem[Mil04]{lattes}
J.~Milnor, \emph{On Latt\`es maps}, preprint available online at  http://arxiv.org/abs/math/0402147.

\bibitem[Mim97]{Mimar-thesis}
A.~Mimar, \emph{On the preperiodic points of an endomorphism of $\bP^1\times \bP^1$  which lie on a curve}, Thesis (Ph.D.), Columbia University, 1997.


\bibitem[Mim13]{Mimar}
A.~Mimar, \emph{On the preperiodic points of an endomorphism of $\bP^1\times \bP^1$ which lie on a curve}, Trans. Amer. Math. Soc. \textbf{365} (2013), no.~1, 161--193.



\bibitem[Mor96]{Moriwaki}
A.~Moriwaki, \emph{Hodge index theorem for arithmetic cycles of codimension one}, Math. Res. Lett. \textbf{3} (1996), no.~2, 173-183.

%\bibitem[Ngu15]{Nguyen}
%K. Nguyen, \emph{Some arithmetic dynamics of diagonally split polynomial maps}, 
%Int. Math. Res. Not.  IMRN, vol. 2015, 1159--1199.

%\bibitem[Paz10]{Fabien}
%\au{F.~Pazuki}
%\ti{Zhang's conjecture and squares of abelian surfaces}
%\jo{C. R. Math. Acad. Sci. Paris}
%\yr{2010}
%\vo{348}
%\pps{483--486}


%\bibitem[Ray83a]{Raynaud1}
%M.~Raynaud, \emph{Courbes sur une vari\'{e}t\'{e} ab\'{e}lienne et points de
%  torsion}, Invent. Math. \textbf{71} (1983), 207--233.

\bibitem[Ray83]{Raynaud}
M.~Raynaud, \emph{Sous-vari\'{e}t\'{e}s d'une vari\'{e}t\'{e} ab\'{e}lienne et
  points de torsion}, Arithmetic and geometry, vol. I, Progr. Math., vol.~35,
  Birkh\"{a}user, Boston, MA, 1983, pp.~327--352.

%\bibitem[SS95]{S-S}
%S.~Schmidt and N.~Steinmetz, \emph{The polynomials associated with a Julia set}, Bull. London Math. Soc. \textbf{27} (1995), 239Â-241. 

%\bibitem[SUZ97]{SUZ}
%L.~Szpiro, E.~Ullmo, and S.~Zhang, \emph{\'{E}quir\'{e}partition des petits points} (French) [Uniform distribution of small points], Invent. Math. \textbf{127} (1997), 337--347.

\bibitem[Thu85]{Thu}
W.~Thurston, \emph{On the combinatorics of iterated rational maps}, preprint, 1985.

\bibitem[Ull98]{Ullmo}
E.~Ullmo, \emph{Positivit\'e et discr\'etion des points alg\'ebriques des
  courbes}, Ann. of Math. (2) \textbf{147} (1998), 167--179.


\bibitem[Yua08]{Yuan}
X.~Yuan, \emph{Big line bundles over arithmetic varieties},
 Invent. Math. \textbf{173} (2008), 603--649.

\bibitem[YZa]{Yuan-Zhang-1}
X. Yuan and S. Zhang, \emph{Calabi theorem and algebraic dynamics}, preprint, 24 pages. 

\bibitem[YZb]{Yuan-Zhang-2}
X.~Yuan and S.~Zhang, \emph{Small points and Berkovich metrics}, 28 pages, preprint available on the webpage: {\tt http://www.math.columbia.edu/\~{ }szhang/papers/Preprints.html}

\bibitem[YZc]{Yuan-Zhang-arxiv}
X.~Yuan and S.~Zhang, \emph{The arithmetic Hodge index theorem for adelic line bundles II},  arXiv:1304.3539. 

\bibitem[YZ16]{Yuan-Zhang-published}
X.~Yuan and S.~Zhang, \emph{The arithmetic Hodge index theorem for adelic line bundles}, Math. Ann. \textbf{367} (2017), no.~3, 1123--1171.


\bibitem[Zan12]{Zannier}
U.~Zannier, \emph{Some problems of unlikely intersections in arithmetic and
  geometry}, Annals of Mathematics Studies, vol. 181, Princeton University
 Press, Princeton, NJ, 2012, With appendixes by David Masser.
 

\bibitem[Zdu90]{Zdu}
A. Zdunik, \emph{Parabolic orbifolds and the dimension of the maximal measure for rational maps}, Inv. Math. {\bf 99} (1990) 627--649.

\bibitem[Zha95a]{Zhang:line}
S. Zhang, \emph{Positive line bundles on arithmetic varieties},
 J. Amer. Math. Soc. \textbf{8} (1995), 187--221. 

\bibitem[Zha95b]{Zhang:metrics}
S.~Zhang, \emph{Small points and adelic metrics},
 J. Alg. Geom. \textbf{4} (1995), 281--300. 

\bibitem[Zha98]{Zhang:Bogomolov}
S.~Zhang, \emph{Equidistribution of small points on abelian varieties}, Ann. Math. (2), {\bf 147}(1998), 159--165.

\bibitem[Zha06]{ZhangLec}
S.~Zhang, \emph{Distributions in algebraic dynamics}, Survey in
  Differential Geometry, vol.~10, International Press, 2006, pp.~381--430.

\end{thebibliography}
\end{document}